\title[The sharp~$C^0$-fragmentation property]{The sharp~$C^0$-fragmentation property for Hamiltonian diffeomorphisms and homeomorphisms on surfaces}
\author{Baptiste Serraille}
\newtheorem{thm}{Theorem}
\newtheorem{prop}{Proposition}[section]
\newtheorem*{prop*}{Proposition}
\newtheorem{lemma}[prop]{Lemma}
\newtheorem{rk}[prop]{Remark}
\newtheorem{dfn}[prop]{Definition}
\newtheorem{coro}{Corollary}
\newtheorem{ques}{Question}[section]
\newcommand{\Sympoc}{\mathrm{Symp}_{0}}
\newcommand{\Sympo}{\mathrm{Symp}_{0}}
\newcommand{\Sympc}{\mathrm{Symp}}
\newcommand{\Symp}{\mathrm{Symp}}
\newcommand{\Ham}{\mathrm{Ham}}
\newcommand{\Homeo}{\mathrm{Homeo}}
\newcommand{\Ker}{\mathrm{Ker}}
\newcommand{\Flux}{\mathrm{Flux}}
\newcommand{\wFlux}{\widetilde{\mathrm{Flux}}}
\newcommand\restr[2]{{
  \left.\kern-\nulldelimiterspace 
  #1 
  \vphantom{|} 
  \right|_{#2} 
  }}
\begin{document}

\begin{abstract}
In this paper, we present a~$C^0$-fragmentation property for Hamiltonian diffeomorphisms. More precisely, it is known that for a given open covering~$\mathcal{U}$ of a compact symplectic surface we can write each~$C^0$-small enough Hamiltonian diffeomorphism as the composition of Hamiltonian diffeomorphisms compactly supported inside the open sets of the covering~$\mathcal{U}$. We show that such a decomposition can be done with a Lipschitz estimate on the~$C^0$-norm of the fragments. We also show the same property for the kernel of~$\theta$, the mass-flow homomorphism for homeomorphisms. This answers a question from Buhovsky and Seyfaddini.
\end{abstract}

\maketitle

\tableofcontents

\section{Introduction and main results}\label{sec: intro}

\subsection{The~$C^0$-fragmentation property on surfaces}\label{subsec: C^0 frag for diffeo}

\subsubsection{The $C^0$-fragmentation for the group of Hamiltonian diffeomorphisms} \label{subsubsec: C^0-frag for diffeo}

The fragmentation property on a given manifold~$M$ allows to decompose diffeomorphisms (or homeomorphisms) of various kinds into a composition of diffeomorphisms (homeomorphisms) supported in small balls. The more refined~$C^0$-fragmentation property provides a control on the size of the fragments. We will be interested in fragmenting elements of the group of Hamiltonian diffeomorphisms on a surface~$\Sigma$. We denote $\Ham(X, \omega)$ the group of compactly supported Hamiltonian diffeomorphisms on $(X, \omega)$. We will sometimes drop the symplectic form $\omega$ in the notation if it is clear which symplectic form is used.

One motivation for having~$C^0$-fragmentation is the following corollary that gives us the existence of paths in the group of Hamiltonian diffeomorphisms of a symplectic surface such that their $C^0$-norm increase in a way that is precisely controlled.

\begin{coro}\label{coro: $C^0$-small isotopy}
	Let~$(\Sigma, \omega)$ be a closed symplectic surface and~$d$ the distance induced by some Riemannian metric on $\Sigma$. There exists a constant~$C>0$, such that for all~$\phi$ in the group of Hamiltonian diffeomorphisms, there exists a Hamiltonian isotopy~$\{\phi_t\}$ such that~$\phi_0=Id$,~$\phi_1=\phi$, that satisfies the following estimate for all~$t \in [0,1]$:
	$$\Vert \phi_t \Vert_{C^0}\leq C \Vert \phi \Vert_{C^0}.$$
\end{coro}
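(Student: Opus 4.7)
The plan is to combine the sharp $C^0$-fragmentation property with a disk-level isotopy construction via Alexander rescaling, treating separately the regime in which $\phi$ is too large for the fragmentation to apply.

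I would first fix a finite cover $\mathcal{U} = \{U_1, \ldots, U_N\}$ of $\Sigma$ by Darboux disks, and let $\varepsilon_0, K > 0$ denote the threshold and Lipschitz constant furnished by the sharp $C^0$-fragmentation theorem for this cover. If $\|\phi\|_{C^0} \geq \varepsilon_0$, any Hamiltonian isotopy from $Id$ to $\phi$ has $C^0$-norm at most $\mathrm{diam}(\Sigma) \leq (\mathrm{diam}(\Sigma)/\varepsilon_0)\,\|\phi\|_{C^0}$, so this case is immediate. Otherwise, I would apply sharp fragmentation to write $\phi = \phi_1 \circ \cdots \circ \phi_N$ with $\mathrm{supp}(\phi_i) \subset U_i$ and $\|\phi_i\|_{C^0} \leq K\|\phi\|_{C^0}$.

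Next, for each fragment I would construct a Hamiltonian isotopy $\{\phi_i^s\}_{s \in [0,1]}$ from $Id$ to $\phi_i$ supported in $U_i$, with $\|\phi_i^s\|_{C^0} \leq K'\|\phi_i\|_{C^0}$ for a constant $K'$ depending only on $\mathcal{U}$. Identifying $U_i$ with the unit disk $D \subset \mathbb{R}^2$ via a Darboux chart, consider the Alexander rescaling
\[
\psi_s(x) := \begin{cases} s\,\phi_i(x/s) & \text{if } x \in sD,\\ x & \text{otherwise}.\end{cases}
\]
A direct computation shows $D\psi_s(x) = D\phi_i(x/s)$, so $\psi_s$ is area-preserving, supported in $sD \subset D$, and satisfies $\|\psi_s\|_{C^0} = s\,\|\phi_i\|_{C^0}$. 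Composing with a smooth reparametrization $s = \rho(t)$ flat to infinite order at $0$ makes $\psi_{\rho(t)}$ smooth on $[0,1]$, and simple-connectedness of $D$ upgrades the area-preserving isotopy to a Hamiltonian one. Pulling back through the chart and absorbing its bi-Lipschitz distortion yields the asserted bound on $\|\phi_i^s\|_{C^0}$.

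Finally, I would concatenate: on $[(k-1)/N, k/N]$ set $\psi_t := \phi_1 \circ \cdots \circ \phi_{k-1} \circ \phi_k^{Nt - k + 1}$. Using $\|\alpha\beta\|_{C^0} \leq \|\alpha\|_{C^0} + \|\beta\|_{C^0}$ iteratively, one obtains $\|\psi_t\|_{C^0} \leq \bigl((N-1)K + K'K\bigr)\|\phi\|_{C^0}$, and the corollary follows with $C := \max\bigl((N-1+K')K,\,\mathrm{diam}(\Sigma)/\varepsilon_0\bigr)$. The main technical obstacle is the disk-level step: producing a \emph{smooth} Hamiltonian isotopy whose $C^0$-norm is controlled by that of the fragment. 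The Alexander rescaling is the natural candidate but is singular at $s = 0$; the flat reparametrization $\rho$ is what tames this singularity.
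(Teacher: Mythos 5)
Your proposal is correct and takes essentially the same approach as the paper: reduce to the case~$\Vert \phi \Vert_{C^0}$ small via the diameter bound, apply the sharp fragmentation (Theorem~\ref{thm: fragmentation lemma on surfaces with Lipschitz estimate}), and then use Alexander rescaling of each fragment inside its disk --- the paper packages this last step as Lemma~\ref{lem: clever Alexander's trick} (Seyfaddini's formulation), which you re-derive directly. The only cosmetic difference is how the disk isotopies are combined: you concatenate them one at a time, whereas the paper runs them simultaneously via $H_1\#\cdots\#H_m$ so that~$\phi_H^t=\phi_{H_1}^t\circ\cdots\circ\phi_{H_m}^t$; both yield the same~$C^0$ bound by the triangle inequality for composition.
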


The fragmentation property has been introduced  in \cite{Thurston}, \cite{Ban78} and \cite{Ban97} by Thurston and Banyaga in order to study the simplicity and perfectness of the groups of diffeomorphisms preserving a symplectic form or preserving a volume form. Later, in his study of the structure of the group of homeomorphisms preserving a full nonatomic measure, Fathi proved in \cite{Fathi} a similar fragmentation property for measure-preserving homeomorphisms. This led to the proof of the simplicity of the group of compactly supported measure-preserving homeomorphisms of the ball of dimension~$n$ whenever~$n\geq 3$. The case of the dimension 2 turns out to be much harder. Only recently Cristofaro-Gardiner, Humilière and Seyfaddini proved that there exists a normal subgroup of the group of area-preserving homeomorphisms of the disk in \cite{CriHumSey}. In earlier work (\cite{LeRoux}), Le Roux showed that the simplicity of the group of area-preserving homeomorphism of the disk is equivalent to the existence of some fragmentation property for homeomorphisms. After Le Roux's work Entov, Polterovich and Py and then Seyfaddini proved quantitative versions of the fragmentation property of~$C^0$-small Hamiltonian diffeomorphisms preserving a volume form in the 2 dimensional case, see \cite[Section 1.6.2]{EPP} and \cite[Proposition 3.1]{Seyfaddini}. On the one hand, the goal in \cite{EPP} was to construct quasi-morphisms on the group of diffeomorphisms, thus understanding better the algebraic properties of this group and on the other hand, Seyfaddini gave a Hölder-type bound on the~$C^0$-norm of the fragments in order to show the~$C^0$-continuity of the Oh-Schwarz spectral invariants. In the present paper we adapt their proof to show the sharpest~$C^0$-fragmentation possible, that is a Lipschitz bound on the~$C^0$-norm of the fragments. This is the content of our first theorem.

\begin{thm}\label{thm: fragmentation lemma on surfaces with Lipschitz estimate}
	Let~$(\Sigma,\omega)$ be a closed surface equipped with an area form~$\omega$ and let~$d$ be a distance induced by some Riemannian metric. Let~$\mathcal{W}=(W_i)_{i=1}^m$ be a finite open covering of the surface~$\Sigma$ by disks. Then there exists a~$C^0$-neighborhood~$N$ of the identity in the group~$\Ham(\Sigma)$ of Hamiltonian diffeomorphisms such that for each~$\phi\in N$, we can decompose~$\phi$ as:
	$$\phi=\phi_1 \circ \phi_2 \circ \cdots \circ \phi_m,$$
	where for all~$1\leq i\leq m$,~$\phi_i$ belongs to~$\Ham(W_i)$. Moreover, we have the following estimate for all~$1\leq i\leq m,$
	$$\Vert \phi_i \Vert_{C^0}\leq C \Vert \phi \Vert_{C^0},$$
	for some constant~$C>0$ independent of~$\phi$.
\end{thm}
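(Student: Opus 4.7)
The plan is to follow the inductive fragmentation strategy of Seyfaddini and Entov--Polterovich--Py, but to replace their local H\"older bound on a disk by a sharp local Lipschitz bound, which is the only place where the exponent degrades. First I would refine the covering: choose $V_i \Subset W_i$ still covering $\Sigma$ and a closed polygonal partition $\Sigma = T_1 \cup \cdots \cup T_m$ (up to measure zero) with $T_i \subset V_i$. For $\phi$ sufficiently $C^0$-small, one has $\phi(T_i) \subset W_i$ for every $i$. Then I would build the $\phi_i$ inductively: set $\psi_0 = \phi$ and at step $i$ choose $\phi_i \in \Ham_c(W_i)$ so that $\psi_i := \phi_i^{-1} \circ \psi_{i-1}$ is the identity on a neighborhood of $T_1 \cup \cdots \cup T_i$. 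Since $\bigcup_i T_i = \Sigma$, after $m$ steps $\psi_m = \mathrm{id}$, giving $\phi = \phi_1 \circ \cdots \circ \phi_m$.

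The construction of $\phi_i$ reduces to a local extension problem: given the symplectic embedding $e_i := \psi_{i-1}|_{T_i} : T_i \hookrightarrow W_i$, which is $C^0$-close to the inclusion, extend it to some $\phi_i \in \Ham_c(W_i)$ with $\|\phi_i\|_{C^0}$ bounded linearly by $\sup_{x \in T_i} d(e_i(x),x)$. Combined with the elementary estimate $\|\psi_i\|_{C^0} \le (C+1)\|\psi_{i-1}\|_{C^0}$ at each step, this yields a Lipschitz fragmentation whose overall constant $(C+1)^m$ depends only on the covering. The technical core of the proof is therefore the following \emph{Lipschitz extension lemma}: there exists $C > 0$ such that any area-preserving embedding $e : T \hookrightarrow W$ sufficiently $C^0$-close to the inclusion extends to $\Phi \in \Ham_c(W)$ with $\|\Phi\|_{C^0} \le C \sup_{x \in T} d(e(x), x)$.

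The main obstacle is precisely this Lipschitz extension lemma. The naive approach is to write $\Phi$ as the time-one map of a Hamiltonian $H$ supported in $W$ and to estimate $\|\Phi\|_{C^0}$ through $\|H\|_\infty$; but the sup-norm of a generating Hamiltonian is controlled only by a sublinear power of $\|e - \mathrm{id}\|_{C^0}$, which is exactly what yields the H\"older exponent in \cite{Seyfaddini}. A sharp Lipschitz bound therefore requires a \emph{direct geometric} construction bypassing a global generating Hamiltonian. My proposal is to work in a chart where $W$ is a disk and $T$ is a rectangle, decompose $e$ as a composition of area-preserving shears supported in thin horizontal and vertical strips (each with $C^0$-norm controlled linearly by $\|e - \mathrm{id}\|_{C^0}$), and extend each shear separately by tapering its defining profile in the transverse direction. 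The delicate point is that the extended shears must remain compactly supported inside $W$, which is why the preliminary inclusion $V_i \Subset W_i$ is needed and why the refinement of the covering must be done with geometric care.
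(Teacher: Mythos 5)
Your inductive scheme is Banyaga--Thurston in spirit and resembles the paper's overall strategy, but it passes over the flux obstruction that the paper spends most of Section~\ref{subsec: def of the obstr} building machinery to handle. Requiring $\phi_i\in\Ham_c(W_i)$ to agree with $\psi_{i-1}$ on $T_i$ \emph{and} to be the identity on $(T_1\cup\cdots\cup T_{i-1})\cap W_i$ is not a disk extension problem: the region $W_i\setminus\bigcup_{j<i}T_j$ in which $\phi_i$ is allowed to act can contain essential annuli, and there the extension of a $C^0$-small area-preserving embedding to a \emph{Hamiltonian} (rather than merely symplectic) diffeomorphism is obstructed by a flux-type invariant --- this is exactly what $\mathcal{O}$ (Definition~\ref{def: definition of O}) and $\mathcal{A}_{i_1,i_2}$ (Definition~\ref{def: def of A}) measure, and why Corollary~\ref{coro: area-preserving extension lemma for rectangles} explicitly assumes the swept area vanishes. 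The paper's response is to take the covering from a triangulation so that each extension problem is standardized (disks at vertices, rectangles at edges, disjoint faces), and then to perform a correction at the vertex stage (Lemma~\ref{lem: fragmentation on the 0-skeleton}) that kills all the edge obstructions $\mathcal{A}_{i_1,i_2}(\phi')$, using the cocycle identity, point (iv) of Proposition~\ref{prop: properties of A}, together with the Lipschitz surjectivity of $\mathcal{O}$ from Lemma~\ref{lem: surjectivity of O}. Your outline would need an analogous correction step, or a proof that $W_i\setminus\bigcup_{j<i}T_j$ can always be arranged to be simply connected; the latter seems hard to guarantee on a surface of positive genus.

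The second gap concerns the local Lipschitz extension itself. You correctly locate the source of the H\"older loss in \cite{Seyfaddini}, but the proposed fix --- decomposing $e$ into $C^0$-small area-preserving shears and tapering each one --- is only a plan, and it runs into the obstacle it is meant to avoid: each tapered shear is still area-preserving, but their composition no longer matches $e$ near $\partial T$, and correcting the resulting density discrepancy on a $\delta$-thin collar by a naive Moser flow on that collar costs $O(\sqrt\delta)$ again. The paper's actual mechanism, Proposition~\ref{prop: adjusting volume forms}, is of a different nature: on the $\delta$-thin strip where the pulled-back form $\Omega''=(1+\chi)\omega$ disagrees with $\omega$, it builds two primitive-flow diffeomorphisms $\Psi_1,\Psi_2$ from the density $1+\chi$ and from the rescaled density $1+\rho'_\delta\,\chi(\cdot,\rho_\delta(\cdot))$ with $\rho_\delta(z)=2\delta z$; the quotient $\Psi_1^{-1}\Psi_2$ is compactly supported with $C^0$-norm $O(\delta)$, and the remaining Moser correction on the unit-width rescaled strip is itself $O(\delta)$. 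This rescaling idea, not a shear decomposition, is what upgrades the H\"older bound to Lipschitz; without it (or something of comparable precision) the estimate $\|\phi_i\|_{C^0}\le C\|\phi\|_{C^0}$ in your outline remains unproved.
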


\begin{rk} 
	The result proved by Seyfaddini in \cite[Proposition 3.1]{Seyfaddini} is, for a very specific covering given by the handle decomposition of the surface, the estimate:~$$\Vert \phi_i \Vert_{C^0}\leq C \left \Vert\phi\right \Vert_{C^0}^{2^{1-N}},$$
	where~$N= 2g+2$ depends on the genus~$g$ of the surface.
\end{rk}

\begin{rk}
It would be interesting whether such $C^0$-fragmentation property hold in higher dimension, the methods used in this paper are specific to area-preserving diffeomorphisms and cannot be adapted directly in higher dimension. However, we point out that fragmentation properties in dimension 4 do exist, as proven by Alizadeh in \cite{Ali}.
\end{rk}

The fragmentation property stated in Theorem \ref{thm: fragmentation lemma on surfaces with Lipschitz estimate} relies on Lemma \ref{lem: new version of the extension lemma}, which is an improvement of the extension lemma in \cite[Section 3.4.2]{Seyfaddini}.

In the following lemma and for the rest of the paper we will denote by~$\mathbb{A}_y$ the annulus~$S^1\times [-y,y]$ for~$y$ a positive real number and we will also denote~$\mathbb{A}_{y,y'}:=S^1\times ([-y',-y] \cup [y,y'])$ if~$y' >y$. We recall that for a subset~$C$ of a manifold~$M$, the notation~$Op(C)$ means any open neighborhood of~$C$ in~$M$.

\begin{lemma}[Area-preserving extension lemma for the annulus]\label{lem: new version of the extension lemma}
	Consider the annulus~$\mathbb{A}_2$ with a symplectic form~$\omega$. Let~$\mathcal{E}$ be the set of smooth area-preserving embeddings~$\phi: Op(\mathbb{A}_1)\rightarrow \mathbb{A}_2$ which are homotopic to the inclusion and such that for some~$y\in (-1,1)$, and hence for all, the signed area in~$\mathbb{A}_2$ bounded by~$S^1\times y$ and~$\phi(S^1 \times y)$ is zero.
	
	Then there exists~$\delta, D, C>0$, such that for all~$\phi \in \mathcal{E}$ with~$\Vert\phi \Vert_{C^0}\leq \delta$, there exists~$\psi \in \text{Ham}(\mathbb{A}_2)$ such that~$\psi \vert_{\mathbb{A}_{1-D\Vert\phi \Vert_{C^0}}}=\phi\vert_{\mathbb{A}_{1-D\Vert\phi \Vert_{C^0}}}$ and 
	$$\Vert \psi \Vert_{C^0}\leq C \Vert\phi \Vert_{C^0}.$$
	
	Moreover, if for some arc~$I\subset S^1$ we have that~$\phi=Id$ outside a quadrilateral~$I\times[-1,1]$ and~$\phi(I\times [-1,1])\subset I\times [-2,2]$, then the extension~$\psi$ can be chosen to be the identity outside of~$I\times [-2,2]$.
\end{lemma}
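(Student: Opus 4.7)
The plan is to decompose $\phi$ as a Hamiltonian straightening composed with a horizontal shear, extend the shear by a trivial cutoff, and reassemble. The crucial observation is that a horizontal shear $(x,y)\mapsto(x+h(y),y)$ has $C^0$-norm equal to $\|h\|_{C^0}$, which is insensitive to the derivative of any cutoff of $h$; this is what ultimately produces the sharp Lipschitz estimate. Since $\phi$ is $C^0$-close to the inclusion, each image curve $\phi(S^1\times\{y\})$ reparameterizes as the graph of a smooth function $x\mapsto y+g(x,y)$ with $\|g\|_{C^0}\leq\|\phi\|_{C^0}$, and the zero-flux hypothesis is exactly $\int_{S^1}g(x,y)\,dx=0$ for every $y$. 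Set
\[
G(x,y):=\int_0^x g(x',y)\,dx',
\]
which is single-valued on $S^1\times(-1,1)$ thanks to zero mean of $g$, and satisfies $\|G\|_{C^0}\leq 2\pi\|\phi\|_{C^0}$. Using $G$ (suitably extended and cut off in $y$) one should produce a Hamiltonian diffeomorphism $\rho\in\mathrm{Ham}(\mathbb{A}_2)$, supported away from $\partial\mathbb{A}_2$, with $\|\rho\|_{C^0}\leq C_1\|\phi\|_{C^0}$, such that $\rho\circ\phi$ maps each horizontal circle $S^1\times\{y\}$ to itself. Area preservation then forces $\rho\circ\phi=(x,y)\mapsto(x+h(y),y)$ for some smooth $h$ with $\|h\|_{C^0}\leq\|\rho\circ\phi\|_{C^0}\leq(1+C_1)\|\phi\|_{C^0}$.

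Given this decomposition, the assembly is straightforward. Choose a smooth cutoff $\chi:[-2,2]\to[0,1]$ with $\chi\equiv1$ on $[-1+D\|\phi\|_{C^0},1-D\|\phi\|_{C^0}]$ and $\mathrm{supp}(\chi)\subset[-1,1]$, define $\tilde s(x,y):=(x+\chi(y)h(y),y)$, and set $\psi:=\rho^{-1}\circ\tilde s$. Being a shear, $\tilde s$ is automatically area-preserving, and its $C^0$-norm equals $\|\chi h\|_{C^0}\leq\|h\|_{C^0}\leq C\|\phi\|_{C^0}$ independently of $\chi'$. Hence $\psi\in\mathrm{Ham}(\mathbb{A}_2)$ with $\|\psi\|_{C^0}\leq\|\rho^{-1}\|_{C^0}+\|\tilde s\|_{C^0}\leq C\|\phi\|_{C^0}$, and $\psi=\rho^{-1}\circ(\rho\circ\phi)=\phi$ on $\mathbb{A}_{1-D\|\phi\|_{C^0}}$ because $\chi\equiv1$ there. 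For the ``moreover'' clause, if $\phi=\mathrm{Id}$ outside $I\times[-1,1]$ then $g$ vanishes outside $I$; placing the basepoint $0$ outside $I$ and using the zero-mean condition, $G$ (hence $\rho$) is supported in $I\times[-2,2]$; moreover $\rho\circ\phi$ coincides with $\mathrm{Id}$ outside $I$, and since the shear structure is determined by its values there, $h$ vanishes identically. Thus $\tilde s=\mathrm{Id}$ and $\psi=\rho^{-1}$ remains supported in $I\times[-2,2]$.

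The hard part is constructing $\rho$ with the bound $\|\rho\|_{C^0}\leq C_1\|\phi\|_{C^0}$, since we only have $C^0$-data on $\phi$. The horizontal component of the Hamiltonian vector field $X_G$ is $\partial_x G=g$, bounded by $\|\phi\|_{C^0}$, but the vertical component $-\partial_y G$ cannot be controlled by naive differentiation under the integral. I would re-express $\partial_y G(x,y)$ geometrically as the signed area between the infinitesimally close image curves $\phi(S^1\times\{y\})$ and $\phi(S^1\times\{y+dy\})$, a quantity of order $\|\phi\|_{C^0}\cdot dy$ by a direct area calculation, and upgrade the resulting first-order straightening to an exact Hamiltonian one by a Moser-type correction. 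This follows the spirit of the extension lemma in~\cite[Section~3.4.2]{Seyfaddini}; the improvement from H\"older to Lipschitz comes from the fact that the full sharpness of the cutoff $\chi$ is absorbed into the shear component of the decomposition, whose $C^0$-size is governed solely by $\|h\|_{C^0}$ and is independent of $\|\chi'\|_{C^0}$.
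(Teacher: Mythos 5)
Your proposal takes a genuinely different route from the paper, but it has a gap that I do not see how to repair within the present framework.

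The paper proceeds by first applying a purely topological (non-area-preserving) extension lemma (Lemma~\ref{lem: smooth extension}) to get a $C^0$-small diffeomorphism~$f$ extending~$\phi$, then analyzes the discrepancy form $f^*\omega-\omega$, which is supported in a strip of width $O(\delta)$. The Lipschitz estimate then comes from Proposition~\ref{prop: adjusting volume forms}: a rescaling trick shows that a boundedly large (not small) discrepancy, as long as it lives in a strip of width~$\delta$, can be killed by a diffeomorphism of $C^0$-size~$O(\delta)$. You instead try to split $\phi$ directly as ``Hamiltonian straightening $\rho$'' composed with ``horizontal shear~$h$'' and observe, correctly and in the same spirit, that a shear cut off in~$y$ has $C^0$-norm insensitive to $\|\chi'\|$. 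Both ideas address the same core issue --- that the derivative of the cutoff must not enter the final $C^0$-estimate --- but they implement it very differently, and your implementation has two problems.

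The main one is the very first step: an embedded curve $\phi(S^1\times\{y\})$ that is $C^0$-close to $S^1\times\{y\}$ and isotopic to it need \emph{not} be a graph over $S^1$. The $x$-projection $\pi_x\circ\phi|_{S^1\times\{y\}}$ has degree~1 but can have critical points (the curve can form small ``S-shapes'' or doubling-backs), so your function $g(\cdot,y)$ simply may not exist, and the primitive $G$ is undefined. Even granting graph structure, the quantity $\partial_y G(x,y)=\int_0^x\partial_y g(x',y)\,dx'$ is a derivative of the data, and there is no reason it is controlled by $\|\phi\|_{C^0}$: an area-preserving map that is $C^0$-small can have arbitrarily large first derivatives. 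Your proposed fix --- reinterpreting $\partial_y G$ as an area between infinitesimally close image curves --- is precisely the kind of argument that one needs to make quantitative, and a priori that area is $O(dy)$ (not $O(\delta\,dy)$) because it is governed by the derivative of $\phi$ transverse to the circles, not by $\|\phi\|_{C^0}$. This is exactly the obstacle that forced \cite{Seyfaddini} into a H\"older estimate, and it is exactly what Proposition~\ref{prop: adjusting volume forms} is designed to circumvent: instead of trying to bound derivatives of $\phi$, the paper only needs a $C^0$-bound on the density~$\chi$ plus a bound on the width of its support, and the reparameterization $\rho_\delta$ does the rest. Without an analogue of this idea, I do not see how to make your construction of~$\rho$ with $\|\rho\|_{C^0}\le C_1\|\phi\|_{C^0}$ go through.

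A secondary, fixable gap: your final map $\psi=\rho^{-1}\circ\tilde s$ need not be Hamiltonian. The cut-off shear $\tilde s$ is a symplectomorphism, but its vertical flux $\int_{-2}^{2}\chi(y)h(y)\,dy$ can be nonzero, and since $\rho$ is Hamiltonian this flux persists in~$\psi$. The paper addresses precisely this at the end of its proof, correcting the symplectomorphism by an element constructed in Lemma~\ref{lem: surjectivity of O} that has the right flux and a Lipschitz $C^0$-bound; you would need the same correction.
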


\begin{rk}
In \cite{Seyfaddini}, Seyfaddini extended the method developed by Entov, Polterovich and Py to show the bound
$$\Vert \psi \Vert_{C^0}\leq C \left( \Vert \phi \Vert_{C^0}\right) ^{\frac{1}{2}}$$
for some constant~$C>0$. The method employed then cannot give a better estimate than the one he obtained.
\end{rk}


\subsubsection{The $C^0$-fragmentation for the kernel of the mass-flow homomorphism}\label{subsubsec: C^0 frag for homeo}

As in \cite{Fathi}, it is sensible to ask whether the $C^0$-fragmentation holds also for homeomorphisms preserving a "good" measure. It is necessary then to restrict our study to a continuous analogue of Hamiltonian diffeomorphisms, that is the group $\Ham(\Sigma)$ is replaced by the group $\Ker(\theta)$. The map $\theta : \Homeo_0(\Sigma, \omega) \to
H^1(\Sigma,\mathbb R)/\Gamma$, where $\Gamma$ is some discrete subgroup of $H^1(\Sigma;\mathbb R)$, denotes the mass-flow homomorphism, first defined in \cite{Schwartzman}. The map originally had target $H_1(\Sigma,\mathbb R)/\Gamma$. We defined the map on the dual to match better the definition of the $\Flux$. Under the assumption that homeomorphism lies in $\Ker(\theta)$ the fragmentation property still holds.

\begin{thm}\label{thm: fragmentation lemma for Homeo}
	Let~$(\Sigma,\omega)$ be a closed surface equipped with an area form $\omega$ and~$d$ be a distance induced by a Riemannian metric. Let~$\mathcal{W}=(W_i)_{i=1}^m$ be a finite open covering of the surface by disks. Then there exists a~$C^0$-neighborhood~$N$ of the identity in the group~$\Ker(\theta)$ such that for each~$\phi\in N$, we can decompose~$\phi$ as:
	$$\phi=\phi_1 \circ \phi_2 \circ\cdots \circ \phi_m,$$
	where for all~$1\leq i\leq m$,~$\phi_i$ belongs to~$\Homeo_{c}(W_i,\omega)$, the group of compactly supported homeomorphisms preserving the area in~$W_i$. Moreover, we have the following estimate for all~$1\leq i\leq m,$
	$$\Vert \phi_i \Vert_{C^0}\leq C \Vert \phi \Vert_{C^0},$$
	for some constant~$C>0$ independent of~$\phi$.
\end{thm}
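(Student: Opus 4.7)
The natural strategy is to deduce Theorem~\ref{thm: fragmentation lemma for Homeo} from Theorem~\ref{thm: fragmentation lemma on surfaces with Lipschitz estimate} by approximation, using the classical fact (Fathi, Oh--M\"uller) that $\Ham(\Sigma)$ is $C^0$-dense in $\Ker(\theta)$.

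First I would refine the covering: choose open disks $W_i'$ with $\overline{W_i'}$ a compact subset of $W_i$, such that $(W_i')_{i=1}^m$ still covers $\Sigma$, and apply Theorem~\ref{thm: fragmentation lemma on surfaces with Lipschitz estimate} to the refined covering. This produces a $C^0$-neighborhood $N'$ of the identity in $\Ham(\Sigma)$ and a Lipschitz constant $C'$ for the decomposition into fragments in $\Ham_c(W_i')$. Given $\phi \in \Ker(\theta)$ sufficiently $C^0$-close to the identity, pick $\phi_k \in \Ham(\Sigma)$ with $\phi_k \to \phi$ uniformly, so that $\phi_k \in N'$ for $k$ large. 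Theorem~\ref{thm: fragmentation lemma on surfaces with Lipschitz estimate} then furnishes decompositions $\phi_k = \phi_{k,1} \circ \cdots \circ \phi_{k,m}$ with $\phi_{k,i} \in \Ham_c(W_i') \subset \Ham_c(W_i)$ and $\|\phi_{k,i}\|_{C^0} \le C' \|\phi_k\|_{C^0}$. The refinement is crucial: it guarantees that the supports of all fragments sit in a fixed compact subset of $W_i$, so that any uniform limit is still compactly supported in $W_i$.

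The main obstacle is the passage to the limit. The fragments $\phi_{k,i}$ are uniformly bounded in $C^0$, but the space of area-preserving homeomorphisms of $\overline{W_i'}$ is not locally compact in the $C^0$ topology, so Arzel\`a--Ascoli does not directly apply. To handle this I would revisit the proof of Theorem~\ref{thm: fragmentation lemma on surfaces with Lipschitz estimate} itself and verify that its construction is $C^0$-continuous in the input. Concretely, that proof builds the fragments inductively, producing $\phi_i$ by applying the extension lemma (Lemma~\ref{lem: new version of the extension lemma}) to the restriction of the ``current'' map to a standard annular neighborhood of $\partial W_i$, then replacing $\phi$ by $\phi_i^{-1}\circ\phi$ and iterating. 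The averaging construction underlying Lemma~\ref{lem: new version of the extension lemma} depends continuously on its input in the $C^0$ topology, and this continuity propagates through the inductive procedure. Therefore $\phi_k \to \phi$ in $C^0$ forces $\phi_{k,i} \to \phi_i$ uniformly for each $i$.

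Once uniform convergence is in hand, the limits $\phi_i$ inherit every required property: each $\phi_i$ is a homeomorphism compactly supported in $\overline{W_i'}\subset W_i$ (the support is closed under $C^0$-limits once the supports are trapped in a fixed compact set), preserves $\mu$ (measure preservation is closed under uniform convergence of homeomorphisms), satisfies $\|\phi_i\|_{C^0}\le C' \|\phi\|_{C^0}$ by lower semicontinuity of the $C^0$-norm, and obeys $\phi = \phi_1\circ\cdots\circ\phi_m$ by joint $C^0$-continuity of composition on homeomorphisms of the compact surface $\Sigma$. Taking $N$ to be any sufficiently small $C^0$-neighborhood of the identity in $\Ker(\theta)$ and $C=C'$ completes the argument.
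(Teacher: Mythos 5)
Your approach is genuinely different from the paper's: you propose to approximate $\phi\in\Ker(\theta)$ by Hamiltonian diffeomorphisms, apply Theorem~\ref{thm: fragmentation lemma on surfaces with Lipschitz estimate} to each approximant, and pass to the limit. The paper instead reruns the entire fragmentation argument \emph{inside} the continuous category: it proves a homeomorphism version of the extension lemma (Lemma~\ref{lem: area-preserving extension lemma for homeo}, via Lemmas~\ref{lem: continuous extension} and~\ref{lem: continuous Khanevsky extension}, using Jordan--Sch\"onflies and the Oxtoby--Ulam theorem), then transposes Corollaries~\ref{coro: area-preserving extension lemma for disks} and~\ref{coro: area-preserving extension lemma for rectangles}, the obstructions $\mathcal O$, $\mathcal A$, and Lemmas \ref{lem: fragmentation on the 0-skeleton}--\ref{lem: fragmentation on the 2-skeleton} verbatim for $\Ker(\theta)$.

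The gap in your argument is precisely the step you flag and then assert away: the passage to the limit. You correctly note that the fragments $\phi_{k,i}$ are only uniformly bounded in $C^0$ and that $\Homeo_c(W_i,\mu)$ is not locally compact, so a subsequence extraction is unavailable. Your replacement --- that the fragmentation procedure of Theorem~\ref{thm: fragmentation lemma on surfaces with Lipschitz estimate} is ``$C^0$-continuous in the input,'' so $\phi_{k,i}\to\phi_i$ --- is unsubstantiated and is almost certainly not true of the construction as written. The extension lemma's proof involves a discretization depending on $\delta=\|\phi\|_{C^0}$, a non-area-preserving extension built from the Khanevsky argument (Jordan--Sch\"onflies is existential, with auxiliary choices of points $x_k$ and arcs $s_k$), correction diffeomorphisms $h_1$ selected non-canonically, and two applications of Moser's trick, none of which come with a canonical map $\phi\mapsto\psi$, let alone a continuous one. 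Making this continuity precise would itself require a substantial argument --- which is exactly what the paper sidesteps by reproving the extension lemma directly for homeomorphisms. A secondary omission: your argument needs $\Ker(\theta)=\overline{\Ham}(\Sigma,\mu)$, which (per Lefeuvre, as cited in the paper) holds when $\mu$ is induced by a symplectic form; for a general Oxtoby--Ulam measure you must first conjugate $\mu$ to a smooth area form via the Oxtoby--Ulam/Moser theorem, which you never mention.
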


\begin{rk}
We point out that the theorem above is not proven by a limit process (by this we mean, writing $\phi$ as the $C^0$-limit of Hamiltonian diffeomorphisms and choosing their respective fragmentations to converge to a fragmentation of $\phi$ by Hamiltonian homeomorphisms) but rather by adapting all the proofs of the smooth case.
\end{rk}

In the same spirit as for Corollary \ref{coro: $C^0$-small isotopy}, we show that for all homeomorphisms~$\phi$ in~$\Ker(\theta)$ there exists a~$C^0$-small isotopy in~$\Ker(\theta)$ between~$Id$ and~$\phi$.

\begin{coro}\label{coro: $C^0$-small isotopy for homeo}
	Let~$(\Sigma, \omega)$ be a closed symplectic surface and~$d$ be a distance induced by some Riemannian metric. There exists a constant~$C>0$, such that for all~$\phi$ in~$\Ker(\theta)$, there exists an isotopy~$\{\phi_t\}$ of area-preserving homeomorphisms in~$\Ker(\theta)$ such that~$\phi_0=Id$,~$\phi_1=\phi$, that satisfies the following estimate for all~$t \in [0,1]$:
	$$\underset{t}{\sup}\Vert \phi_t \Vert_{C^0}\leq C \Vert \phi \Vert_{C^0}.$$
\end{coro}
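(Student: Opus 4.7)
The plan is to apply Theorem \ref{thm: fragmentation lemma for Homeo} to reduce to the case of a single fragment supported in a disk, then construct an explicit $C^0$-small isotopy for each fragment via an area-preserving rescaling, and finally concatenate.

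I first reduce to small $\phi$. Let $N$ be the $C^0$-neighborhood of the identity provided by Theorem \ref{thm: fragmentation lemma for Homeo} and fix $\delta_0 > 0$ with $\{\psi : \Vert \psi \Vert_{C^0} < \delta_0\} \subset N$. For $\phi$ with $\Vert \phi \Vert_{C^0} \geq \delta_0$, any continuous path from $Id$ to $\phi$ in $\Ker(\theta)$ (available by the known path-connectedness of $\Ker(\theta)$ on closed surfaces) has $C^0$-norm bounded by $\mathrm{diam}(\Sigma) \leq (\mathrm{diam}(\Sigma)/\delta_0) \Vert \phi \Vert_{C^0}$. So I may assume $\phi \in N$ and invoke Theorem \ref{thm: fragmentation lemma for Homeo} to write $\phi = \phi_1 \circ \cdots \circ \phi_m$ with $\phi_i \in \Homeo_c(W_i, \mu)$, $\Vert \phi_i \Vert_{C^0} \leq C_1 \Vert \phi \Vert_{C^0}$, and each $\phi_i \in \Ker(\theta)$ automatically, being compactly supported in a disk.

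For each $i$, I construct an isotopy in $\Homeo_c(W_i, \mu) \cap \Ker(\theta)$ from $Id$ to $\phi_i$ with $C^0$-norm at most $C_2 \Vert \phi_i \Vert_{C^0}$. Since $\omega$ is smooth, Moser's trick provides a smooth area-preserving chart $h_i : W_i \to D^2$, bi-Lipschitz on $\overline{W_i}$. Writing $\hat\phi_i = h_i \circ \phi_i \circ h_i^{-1}$, compactly supported in $D(0, R_i)$ for some $R_i < 1$, define the rescaling
$$\hat\psi_i^s(x) = s\,\hat\phi_i(x/s), \quad s \in (0, 1], \qquad \hat\psi_i^0 = Id.$$
A direct change-of-variables check shows $\hat\psi_i^s$ is an area-preserving homeomorphism of $D^2$ compactly supported in $D(0, s R_i)$, continuous in $s$ up to $s = 0$, and satisfying $\Vert \hat\psi_i^s \Vert_{C^0} = s \Vert \hat\phi_i \Vert_{C^0}$. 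Pulling back, $\psi_i^s := h_i^{-1} \circ \hat\psi_i^s \circ h_i$ lies in $\Homeo_c(W_i, \mu) \cap \Ker(\theta)$, and the bi-Lipschitz property of $h_i$ yields $\Vert \psi_i^s \Vert_{C^0} \leq C_2 \Vert \phi_i \Vert_{C^0}$ for a uniform $C_2$.

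Finally, I concatenate the isotopies in the natural order: for $t \in [(i-1)/m, i/m]$ with $s = mt - (i-1)$, set
$$\phi_t = \psi_{m-i+1}^s \circ \phi_{m-i+2} \circ \cdots \circ \phi_m,$$
interpreting the empty product at $i = 1$ as $Id$. This defines a continuous isotopy from $Id$ to $\phi$ in $\Ker(\theta)$, and the triangle inequality for $C^0$-norms of compositions gives $\Vert \phi_t \Vert_{C^0} \leq (m + C_2) C_1 \Vert \phi \Vert_{C^0}$. The main technical point is the rescaling construction for each fragment, which crucially relies on the smoothness of $\omega$ to obtain a bi-Lipschitz area-preserving chart; the analogous statement in the general Oxtoby-Ulam setting of Theorem \ref{thm: fragmentation lemma for Homeo} would be more delicate since an Oxtoby-Ulam identification with $D^2$ is only a homeomorphism.
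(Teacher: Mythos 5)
Your proposal is correct and follows the same high-level strategy as the paper (reduce to small $\phi$, fragment via Theorem~\ref{thm: fragmentation lemma for Homeo}, construct a small isotopy for each fragment by an Alexander-type rescaling, then put the pieces together), but the rescaling step is established by a genuinely different argument. The paper's Lemma~\ref{lem: clever Alexander's trick for homeo} takes $\psi\in\overline{\Ham_c}(B_r^{2n})$, approximates it by Hamiltonian diffeomorphisms $\psi_k$ generated by Hamiltonians $H_k$, rescales each $\psi_k$ (using the explicit rescaled Hamiltonian $H_{k,s}$), and concludes that the limit $\phi^s$ stays in $\overline{\Ham_c}$; this is the continuous analogue of Seyfaddini's clever Alexander's trick, and it applies uniformly in the Oxtoby--Ulam setting of Theorem~\ref{thm: fragmentation lemma for Homeo}. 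You instead transport each fragment to a round disk via a smooth Moser chart, rescale there, and observe that the dilation conjugation preserves Lebesgue measure (which, as you note, is a pure measure-theoretic scaling identity valid for homeomorphisms, no Jacobian needed), so the rescaled maps lie in $\Homeo_c(W_i,\mu)\subset\Ker(\theta)$; this is more elementary and avoids the approximation argument, but, as you acknowledge, it leans on the smoothness of $\omega$ to produce a bi-Lipschitz area-preserving chart, whereas the paper's lemma works out of the box for any Oxtoby--Ulam measure. Since Corollary~\ref{coro: $C^0$-small isotopy for homeo} is stated for a symplectic surface and $\overline{\Ham}(\Sigma,\omega)=\Ker(\theta)$ by Lefeuvre's result, both routes land in the right group. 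Two minor remarks: the Moser chart should be taken onto a disk of radius matching the $\mu$-area of $W_i$ rather than the unit disk (a cosmetic point), and you concatenate the fragment isotopies sequentially where the paper composes the flows simultaneously via $H_1\#\cdots\#H_m$; both give the Lipschitz bound with constants of the same order in $m$.
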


We will prove Theorem \ref{thm: fragmentation lemma for Homeo} by using an analog of Lemma \ref{lem: new version of the extension lemma} for homeomorphisms.

\begin{lemma}\label{lem: area-preserving extension lemma for homeo}
	Consider~$\mathbb{A}_2$ with an area form $\omega$. Let~$\mathcal{E}'$ be the set of continuous measure-preserving embeddings~$\phi: Op(\mathbb{A}_1)\rightarrow \mathbb{A}_2$ which are homotopic to the inclusion and such that for some~$y\in (-1,1)$, and hence for all, the signed area in~$\mathbb{A}_2$ bounded by~$S^1\times y$ and~$\phi(S^1 \times y)$ is zero.
	
	Then there exist~$\delta, D , C>0$, such that for all~$\phi \in \mathcal{E}'$ with~$\Vert\phi \Vert_{C^0}\leq \delta$, there exists a compactly supported area-preserving homeomorphism~$\psi \in \ker(\theta)$ of $\mathbb A_2$ such that~$\psi \vert_{\mathbb{A}_{1-D\Vert\phi \Vert_{C^0}}}=\phi\vert_{\mathbb{A}_{1-D\Vert\phi \Vert_{C^0}}}$ and 
	$$\Vert \psi \Vert_{C^0}\leq C \Vert\phi \Vert_{C^0}.$$
	Moreover, if for some arc~$I\subset S^1$ we have that~$\phi=Id$ outside a quadrilateral~$I\times[-1,1]$ and~$\phi(I\times [-1,1])\subset I\times [-2,2]$, then the extension~$\psi$ can be chosen to be the identity outside of~$I\times [-2,2]$.
\end{lemma}


\subsection{Organization of the paper}\label{subsec: Organization}

We review briefly the content of each section. In Section \ref{sec: proof of the corollaries} we prove Corollary \ref{coro: $C^0$-small isotopy} and \ref{coro: $C^0$-small isotopy for homeo} assuming that we know the $C^0$-fragmentation property (Theorem \ref{thm: fragmentation lemma on surfaces with Lipschitz estimate} and Theorem \ref{thm: fragmentation lemma for Homeo}). In Section \ref{sec: pf of the frag prop} we show that the area-preserving extension lemmas yield the $C^0$-fragmentation property for diffeomorphisms and for homeomorphisms. In Section \ref{sec: pf of the area-preserving extension lemma} we prove the area-preserving extension lemmas and in Section \ref{sec: pf of extension lemma} we prove an extension lemma that we use in Section \ref{sec: pf of the area-preserving extension lemma}.


\subsection{Acknowledgments}\label{subsec: acknowledgments}

I would like to warmly thank Lev Buhovsky for asking me this question and hosting me in Tel-Aviv University, he also pointed out to me the idea behind Section \ref{sec: pf of the frag prop}. I would also like to thank him for introducing me to this beautiful field that symplectic geometry is. I thank
the anonymous referee for helpful comments as well as for spotting a mistake in the first version of this paper. I also thank Lev Buhovsky, Emmanuel Giroux, Sobhan Seyfaddini and Maksim Stoki\'{c} for giving me great advice related to the redaction of this paper and even reading some parts of the paper with me. I want to thank the members of the student lab of Tel-Aviv University and Menashe for their hospitality that made my stay very pleasant. I thank the numerous people of the workshop
of symplectic topology in Belgrade and Tristan Humbert that sparked my interest of the question of the optimal constant in Corollary \ref{coro: $C^0$-small isotopy}. I also thank Dylan Cant for a fruitful discussion that led to the annex of this paper. I was partially supported by ERC Starting Grant 757585.


\section{Some definitions and notations}

In this section we go through some definitions and notations that will be used later on and describe geometrical interpretations of the flux homomorphism and the mass-flow homomorphism.

Let~$(M, \omega)$ be a connected symplectic manifold. Let~$\Symp(M,\omega)$ be the set of compactly smooth diffeomorphisms of~$M$ that preserve~$\omega$. We then let~$\Sympo(M, \omega)$ be the connected component of~$Id$ in~$\Symp(M,\omega)$, i.e.~$\phi \in \Sympo(M,\omega)$ if and only if there exists a smooth family of compactly supported symplectic diffeomorphisms~$\left(\phi_t\right)_{t\in[0,1]}$ such that~$\phi_0=Id$ and~$\phi_1=\phi$.

A \textit{Hamiltonian}~$H$ is a smooth function~$H: [0,1] \times M \rightarrow \mathbb{R}$ compactly supported in~$[0,1]\times M$. A Hamiltonian~$H$ induces the \textit{Hamiltonian flow}
$$\phi_H^t: M\rightarrow M,\quad (0\leq t \leq 1),$$ 
by integrating the unique time-dependent vector field~$X_H$ satisfying~$ \iota_{X_H}\omega=dH_t$, the isotopy $\phi_H^t$ is a symplectic isotopy. A \textit{Hamiltonian diffeomorphism} is a diffeomorphism obtained as the time-1 map of a Hamiltonian flow. We will denote~$\text{Ham}(M,\omega) \subset \Symp_0(M,\omega)$ the set of such diffeomorphisms. We will eliminate the symplectic form~$\omega$ from the above notation when no confusion is possible. We recall that if~$H$ and~$G$ are two Hamiltonians with flows~$\phi_H^t$ and~$\phi_G^t$, then the Hamiltonian~$H \# G(t,x)=H(t,x)+G(t,(\phi_H^t)^{-1}(x))$ generates the Hamiltonian flow~$\phi_H^t \circ \phi_G^t$.

We will now give an alternative definition of~$\Ham(M)$ using the~$\Flux$, we will need this definition later on, for a proof of the properties stated below we refer the reader to \cite[Chapter 10]{McD-Sal}. We define first the homomorphism
$$\wFlux:  \widetilde{\Sympo}(M,\omega)\rightarrow H^1(M,\mathbb{R}),$$
where \smash[t]{$\widetilde{\Sympo}(M,\omega)$} is the universal cover of~$\Sympo(M,\omega)$ in the following way. Let~$\{\phi_t\}$ be a symplectic isotopy from~$Id$ to~$\phi_1$ and let~$X_t$ be the time-dependent vector field defined via the relation~$$\dfrac{d}{dt}\phi_t=X_t\circ \phi_t.$$
Since~$\{\phi_t\}$ is a symplectic isotopy, the 1-form~$\iota_{X_t}\omega$ is a closed form. We then define:
$$\wFlux(\phi_t):=\int_0^1 [\iota_{X_t}\omega]\,dt \in H^1(M,\mathbb{R}).$$

This 1-form depends only on the choice of the homotopy class of the isotopy~$\phi_t$ with fixed endpoints so \smash[t]{$\wFlux$} is well-defined. Also one can see by the natural identification of~$H^1(M,\mathbb{R})$ and~$\text{Hom}(\pi_1(M),\mathbb{R})$ that \smash[t]{$\wFlux(\{\phi_t\})$} acts on~$\pi_1(M)$. We can see that roughly speaking, in dimension 2 this action describes how much ``mass" goes through a loop~$\gamma$ in~$M$ during the isotopy. If~$\{\phi_t\}$ is a Hamiltonian isotopy, then the 1-form~$\iota_{X_t}\omega$ is exact and thus~$\smash[t]{\wFlux(\phi_t)=0}$, moreover one can show that if~$\phi_t$ is such that\smash[t]{~$\wFlux(\phi_t)=0$} then~$\phi$ is a Hamiltonian diffeomorphism. A proof of this non-trivial fact can again be found in \cite[Chapter 10]{McD-Sal}.

We let $\Gamma$ the image under $\smash[t]{\wFlux}$ of the loops based at $Id$ in $\Sympo(M, \omega)$, then the homomorphism $\smash[t]{\wFlux}$ descends to $\Flux : \Sympo(M) \to H^1(M, \mathbb R)/\Gamma$. This way we have a reciprocal statement, a symplectomorphism $\phi$ in $\Sympo(M)$ is in $\Ham(M)$ if and only if we have $\Flux(\phi) = 0$.

\bigskip

On a symplectic manifold~$(M,\omega)$ endowed with a distance~$d$ induced by a Riemannian metric, the \textit{$C^0$-norm} (or \textit{uniform norm}) is defined by
$$\Vert \phi \Vert_{C^0}:= \underset{x}{\max} ~d\left(x,\phi(x)\right).$$
Similarly, given a symplectic isotopy~$\{\phi_t\}$, we define its \textit{$C^0$-norm} by
$$\Vert\{\phi_t\}\Vert_{C^0}^{\text{path}}:=\underset{x,t}{\max} ~d(x,\phi_t(x))=\underset{t}{\sup}\Vert \phi_t \Vert_{C^0}.$$
This norm induces what is called the \textit{$C^0$-topology}. We denote
$$\overline{\Ham}(M, \omega) \subset \Homeo_0(M,\omega)$$ the closure for the~$C^0$-norm of the group of Hamiltonian diffeomorphisms in the group of homeomorphisms
of $M$ in the identity component of the identity. Note that in the particular case where $M$ is a surface every diffeomorphism of $\overline{\Ham}(M, \omega$) is in $\Ham(M, \omega)$.

\bigskip

In this paragraph we focus on the case of a closed surface~$(\Sigma,\mu)$ equipped with a measure~$\mu$. We define~$\Homeo(\Sigma,\mu)$ as the set of homeomorphisms of~$\Sigma$ that preserve the measure~$\mu$ and~$\Homeo_0(\Sigma,\mu)$ the identity component of~$\Homeo(\Sigma,\mu)$. The \textit{mass-flow homomorphism} was introduced by Schwartzman in \cite{Schwartzman}. For the definition of the mass-flow homomorphism on general compact metric spaces we refer to \cite[Section 0]{Fathi}. Let \smash[t]{$\widetilde{\Homeo}(\Sigma, \mu)$} be the set of paths starting at the identity in~$\Homeo_{0}(\Sigma,\mu)$. Fathi first defines a homeomorphism~\smash[t]{$\widetilde{\theta}: \widetilde{\Homeo_{0}}(\Sigma, \mu)\rightarrow H^1(\Sigma,\mathbb{R})$}. Denote by~$\Gamma$ the image under \smash[t]{$\widetilde{\theta}$} of the subset of loops based at~$Id$ in~$\Homeo(\Sigma)$, then~$\widetilde{\theta}$ descends naturally to~$\theta: \Homeo(\Sigma,\mu)\rightarrow H^1(\Sigma,\mathbb{R})/\Gamma$.  It is proved in \cite{Lefeuvre} that if~$\mu$ is induced by a symplectic form, then~$\overline{\Ham}(\Sigma,\mu)=\Ker(\theta)$. Theorem \ref{thm: fragmentation lemma for Homeo} can thus be rewritten in terms of~$\overline{\Ham}(\Sigma,\mu)$ whenever~$\mu$ is given by a symplectic form.


\section{Proof of some consequences of the $C^0$-fragmentation property}\label{sec: proof of the corollaries}

In this section we prove the Corollaries \ref{coro: $C^0$-small isotopy} and \ref{coro: $C^0$-small isotopy for homeo} of Theorems \ref{thm: fragmentation lemma on surfaces with Lipschitz estimate} and \ref{thm: fragmentation lemma for Homeo}. In order to prove Corollary \ref{coro: $C^0$-small isotopy} we will use the case where~$\Sigma$ is a disk, the following result is proved by Seyfaddini in \cite[Lemma 3.2]{Seyfaddini}, the proof relies on an ingenious use of Alexander's trick.

\begin{lemma}\label{lem: clever Alexander's trick}
	We endow $\mathbb R ^{2n}$ with the standard metric. Suppose~$\psi \in \Ham(B_r^{2n})$,  then there exists a Hamiltonian~$H: [0,1] \times B_r^{2n} \rightarrow \mathbb{R}$ such that~$\psi=\phi_H^1$ and~$\underset{t}{sup} \Vert \phi_H^t \Vert_{C^0}\leq \Vert \psi \Vert_{C^0}$.
\end{lemma}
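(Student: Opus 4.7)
The plan is to adapt the classical Alexander trick (spatial dilation contracting to the identity) to the Hamiltonian category, the point being that the dilation rescales the~$C^0$-norm linearly. Fix~$\psi\in\Ham_c(B_r^{2n})$ and choose~$\epsilon>0$ so that~$\psi$ is the identity outside~$B_{r-\epsilon}^{2n}$. For~$s\in(0,1]$ set
$$\psi_s(x) := s\,\psi(x/s) = (D_s\circ\psi\circ D_s^{-1})(x), \qquad D_s(x) := sx.$$
Since~$D_s^*\omega = s^2\omega$, one verifies~$\psi_s^*\omega = \omega$, so each~$\psi_s$ is a symplectomorphism compactly supported in~$B_{s(r-\epsilon)}^{2n}\subset B_r^{2n}$. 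A change of variable~$y = x/s$ gives the key scaling identity
$$\Vert \psi_s\Vert_{C^0} = \sup_x|s\psi(x/s) - x| = s\sup_y|\psi(y)-y| = s\,\Vert \psi\Vert_{C^0}.$$

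The first nontrivial step is to recognize~$\{\psi_s\}_{s\in(0,1]}$ as a \emph{Hamiltonian} isotopy. Symplectic it is by the above; Hamiltonian it becomes for free because~$H^1(B_r^{2n})=0$, so the 1-form~$\iota_{X_s}\omega$ associated to the generating vector field of the isotopy is automatically exact. Normalizing the primitive to vanish near the boundary, one obtains a smooth Hamiltonian~$F(s,x)$ defined on~$(0,1]\times B_r^{2n}$. To extend to~$t=0$ I would then reparametrize: choose a smooth nondecreasing~$\lambda:[0,1]\to[0,1]$ with~$\lambda(0)=0$,~$\lambda(1)=1$, flat to infinite order at~$0$, and set
$$\phi_H^t := \psi_{\lambda(t)}, \qquad H(t,x) := \lambda'(t)\,F(\lambda(t),x).$$
Granted a smooth Hamiltonian isotopy on~$[0,1]$, the scaling identity yields at once
$$\Vert\phi_H^t\Vert_{C^0} = \lambda(t)\,\Vert\psi\Vert_{C^0} \leq \Vert\psi\Vert_{C^0}\quad\text{for all } t\in[0,1],$$
which is the estimate claimed.

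The main obstacle is the smoothness of the isotopy at~$t=0$. The naive dilation family does not extend smoothly to the identity at~$s=0$ in general: differentiating in the spatial variable at the origin yields~$d\psi_s|_0 = d\psi|_0$ independently of~$s>0$, while~$d(Id)|_0 = I$, so no matter how flat the time reparametrization~$\lambda$ is chosen the Jacobian of~$\phi_H^t$ at the origin jumps as~$t$ leaves~$0$. This is where I expect the \emph{ingenious} ingredient attributed to Seyfaddini to enter: a localized modification of the family in a neighborhood of the origin shrinking together with the support of~$\psi_s$, which smoothly glues the dilation isotopy to the identity at~$s=0$ without perturbing the sharp~$C^0$-bound. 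Once such a modification is in place the outline above delivers the lemma.
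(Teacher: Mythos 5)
Your setup is right: the dilation family $\psi_s(x)=s\psi(x/s)$, the scaling $\Vert\psi_s\Vert_{C^0}=s\Vert\psi\Vert_{C^0}$, and the fact that for $s>0$ the path $s\mapsto\psi_s$ is a smooth symplectic isotopy of $B_r^{2n}$ and hence Hamiltonian since $H^1(B_r^{2n})=0$. You are also right that the obstruction at $s=0$ is genuine: for every $s>0$ one has $d\psi_s|_0=d\psi|_0$, which does not converge to the identity as $s\to 0$ unless $d\psi(0)=I$, so the two-variable map $(s,x)\mapsto\psi_s(x)$ cannot be $C^1$ at $(0,0)$. What is missing is the resolution. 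Your last paragraph merely asserts that an unspecified ``localized modification'' gluing the family to the identity at $s=0$ exists and keeps the bound; this is exactly the content of the lemma and is not constructed, so the proof as written is incomplete. Moreover, the obstruction is not removable by time-reparametrization alone: if $\Phi_t:=\psi_{\sigma(t)}$ with $\sigma(0)=0$ and $\sigma>0$ on $(0,1]$, then $\partial_x\Phi_t(0)=d\psi(0)$ for every $t>0$ and equals $I$ at $t=0$, so the Jacobian jumps no matter how flat $\sigma$ is chosen; equivalently, the generating vector field $X_s$ of $s\mapsto\psi_s$ has $C^1$-norm of order $1/s$, and $\sigma'(t)/\sigma(t)$ cannot tend to zero for a smooth $\sigma$ vanishing at $0$.

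The missing idea, which I believe is precisely Seyfaddini's trick, is to never drive $s$ all the way to $0$. Pick a Hamiltonian $G$ generating $\psi$, normalized to vanish for $t$ near $0$ and $1$, and set $G_s(t,x):=s^2 G(t,x/s)$ extended by $0$, so that $\phi_{G_s}^1=\psi_s$ and the whole flow $\phi_{G_s}^t$ is supported in $B_{s r}$. If $\psi=Id$ take $H=0$; otherwise put $\epsilon:=\Vert\psi\Vert_{C^0}/(2r)\in(0,1]$. On $[0,1/2]$ run $t\mapsto\phi_{G_\epsilon}^{\mu(t)}$ with $\mu$ smooth, $\mu(0)=0$, $\mu(1/2)=1$ and $\mu'$ vanishing near the endpoints: since the flow never leaves $B_{\epsilon r}$, one gets $\Vert\phi_{G_\epsilon}^{\mu(t)}\Vert_{C^0}\le 2\epsilon r=\Vert\psi\Vert_{C^0}$. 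On $[1/2,1]$ run $t\mapsto\psi_{\sigma(t)}$ with $\sigma$ smooth, $\sigma(1/2)=\epsilon$, $\sigma(1)=1$ and $\sigma'$ vanishing near the endpoints; because $\sigma\geq\epsilon>0$ this segment is a genuinely smooth Hamiltonian isotopy, with $\Vert\psi_{\sigma(t)}\Vert_{C^0}=\sigma(t)\Vert\psi\Vert_{C^0}\le\Vert\psi\Vert_{C^0}$. The two pieces agree, together with their velocities, at $t=1/2$ (both are constant there), so their concatenation is a smooth compactly supported Hamiltonian isotopy from $Id$ to $\psi$ with $\sup_t\Vert\phi_H^t\Vert_{C^0}\le\Vert\psi\Vert_{C^0}$, and the corresponding Hamiltonian is the concatenation of $\mu'(t)G_\epsilon(\mu(t),\cdot)$ with the primitive of $\sigma'(t)\,\iota_{X_{\sigma(t)}}\omega$. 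The degenerate value $s=0$ is simply never reached.
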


With the help of Lemma \ref{lem: clever Alexander's trick}, one can prove Corollary \ref{coro: $C^0$-small isotopy}.

\begin{proof}[Proof of Corollary \ref{coro: $C^0$-small isotopy}]

Assume that we know that the statement of Corollary \ref{coro: $C^0$-small isotopy} is true on a~$C^0$-neighborhood of~$Id$ in~$\Ham(\Sigma)$. That is, there exists~$\varepsilon>0$ and a constant~$C>0$, such that for all~$\Vert\phi\Vert_{C^0}\leq \varepsilon$, there exists a Hamiltonian~$H_t$ such that~$\Vert\phi_H^t\Vert_{C^0}^{\text{path}} \leq C \Vert \phi \Vert_{C^0}$. We note~$D:=\text{Diam}(\Sigma)$ the diameter of the surface, it is well-defined by compactness. Then~$\Vert \phi_H^t \Vert_{C^0}^{\text{path}}\leq D$ for any Hamiltonian isotopy~$\phi_H^t$ with~$\phi_H^1=\phi$. We can compute that if~$\Vert \phi \Vert_{C^0} \geq \varepsilon$,
$$\Vert \phi_H^t \Vert_{C^0}^{\text{path}}\leq D =\frac{D}{\varepsilon} \varepsilon \leq \frac{D}{\varepsilon} \Vert \phi \Vert_{C^0}.$$
We can combine the two cases~$\Vert \phi \Vert_{C^0} \leq \varepsilon$ and~$\Vert \phi \Vert_{C^0} \geq \varepsilon$ to finally obtain that, for all~$\phi \in \Ham(\Sigma)$ there exists  a Hamiltonian~$H$ such that 
$$\Vert \phi_H^t \Vert_{C^0}^{\text{path}}\leq \max\left(\dfrac{D}{\varepsilon},C\right) \Vert \phi \Vert_{C^0}.$$
Which is exactly the result we wanted.

It remains to prove the property of Corollary \ref{coro: $C^0$-small isotopy} on a well-chosen~$C^0$-neighborhood of the identity. Take any finite covering~$\mathcal{U}=(U_i)_{i=1}^m$ of~$\Sigma$ by disks~$U_i$. We will take ~$N$ and~$C>0$ respectively the neighborhood and the constant given by Theorem \ref{thm: fragmentation lemma on surfaces with Lipschitz estimate}. Let~$\phi \in \Ham(\Sigma)$, the~$C^0$-fragmentation property tells us that it is possible to decompose~$\phi = \phi_1 \circ \phi_2 \circ \cdots \circ \phi_m$, where each~$\phi_i$ is a compactly supported Hamiltonian diffeomorphism of the disk~$U_i$ and~$\Vert \phi_i \Vert_{C^0}\leq C \Vert \phi \Vert_{C^0}$. We now use Lemma \ref{lem: clever Alexander's trick}, so there exist compactly supported Hamiltonians~$H_i: [0,1] \times U_i \rightarrow \mathbb{R}$ such that 
$$\phi_{H_i}^1=\phi_i, \quad \Vert \phi_{H_i}^t \Vert_{C^0}^{\text{path}}\leq C_i \Vert \phi_i \Vert_{C^0},$$
where $C_i$ is a constant that comes from the fact that the metric on $U_i$ is not the standard metric. The constant $C_i$ only depends on the metric on $U_i$ and not on $\phi$.

Let~$H:=H_1 \# H_2 \# \cdots \# H_m$, then~$H$ generates~$\phi_H^t$ a Hamiltonian isotopy such that~$\phi_{H}^t=\phi_{H_1}^t\circ \phi_{H_2}^t\circ \cdots \phi_{H_m}^t$, in particular~$\phi=\phi_H^1$. Also, by the property of the~$C^0$-norm and the previous estimates,
\begin{align*}
	\Vert\phi_{H}^t \Vert_{C^0}^{\text{path}}&=\Vert\phi_{H_1}^t\circ \phi_{H_2}^t\circ \cdots \circ \phi_{H_m}^t \Vert_{C^0}^{\text{path}}\\
	&\leq \Vert \phi_1^t \Vert_{C^0}^{\text{path}} +\Vert \phi_2^t \Vert_{C^0}^{\text{path}}+\cdots + \Vert \phi_m^t \Vert_{C^0}^{\text{path}}\\
	&\leq C_1 \Vert \phi_1\Vert_{C^0}+C_2 \Vert \phi_2 \Vert_{C^0}+\cdots + C_m\Vert \phi_m \Vert_{C^0}\\
	&\leq C(C_1+C_2+\ldots+C_m) \Vert \phi \Vert_{C^0}.
\end{align*}
Which is the result we intended to prove.
\end{proof}

We also prove Corollary \ref{coro: $C^0$-small isotopy for homeo} by using the same method, we just need to adapt Seyfaddini's method to show the following lemma.

\begin{lemma}\label{lem: clever Alexander's trick for homeo}
	Suppose that~$\psi \in \overline{\Ham}(B_r^{2n})$, then there exists a family~$\phi^t$ of homeomorphisms in~$\smash[t]{\overline{\Ham}(B_r^{2n})}$ such that~$\psi=\phi^0$,~$\psi=\phi^1$ and~$\underset{t}{sup} \Vert\phi^t \Vert_{C^0}\leq \Vert \psi \Vert_{C^0}$.
\end{lemma}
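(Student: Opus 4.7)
The plan is to bypass the construction of an actual Hamiltonian and apply the classical Alexander trick formula directly to $\psi$. For $t \in (0,1]$ I would define
$$
\phi^t(x) =
\begin{cases}
t\,\psi(x/t) & \text{if } |x| \le tr, \\
x & \text{if } |x| > tr,
\end{cases}
$$
and set $\phi^0 = Id$. Since $\psi \in \overline{\Ham_c}(B_r^{2n})$ is a $C^0$-limit of homeomorphisms supported in $B_r^{2n}$, it is the identity on $\partial B_r^{2n}$; the two branches therefore agree on $\{|x| = tr\}$, so each $\phi^t$ is a continuous self-map of $\mathbb{R}^{2n}$, and the obvious inverse (the same formula with $\psi^{-1}$ in place of $\psi$) makes it a homeomorphism.

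Next, to place $\phi^t$ inside $\overline{\Ham_c}(B_r^{2n})$, I would pick smooth approximants $\psi_n \in \Ham_c(B_r^{2n})$ with $\psi_n \to \psi$ uniformly and apply the same formula to obtain $\phi_n^t$. A direct pullback computation, in which the scaling factors $t^2$ and $t^{-2}$ in the symplectic form cancel exactly, shows that $\phi_n^t$ preserves the standard symplectic form; being compactly supported in the ball, it is Hamiltonian. The trivial estimate $|\phi_n^t(x) - \phi^t(x)| \le t\,\|\psi_n - \psi\|_{C^0}$ then gives $\phi_n^t \to \phi^t$ uniformly in both $x$ and $t$, which places $\phi^t$ in $\overline{\Ham_c}(B_r^{2n})$ for every $t$.

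The size bound is essentially built into the formula: on $\{|x| \le tr\}$ one has $|\phi^t(x) - x| = t\,|\psi(x/t) - x/t| \le t\,\|\psi\|_{C^0} \le \|\psi\|_{C^0}$, and $\phi^t$ is the identity elsewhere. Continuity of $t \mapsto \phi^t$ in the $C^0$ topology follows from the uniform continuity of $\psi$ on $\overline{B_r^{2n}}$: for $t < t'$ close one splits the domain into $\{|x| \le tr\}$, $\{tr < |x| \le t'r\}$, and $\{|x| > t'r\}$, and in the middle annulus the point $x/t'$ sits near $\partial B_r^{2n}$, where $\psi$ is close to the identity.

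The main obstacle is to verify that the Alexander construction, applied to a mere homeomorphism, lands inside $\overline{\Ham_c}$ rather than in the a priori larger $C^0$-closure of the group of symplectic or volume-preserving homeomorphisms. Seyfaddini's smooth argument cannot be imitated literally because $\psi$ need not be differentiable, so no Hamiltonian flow is immediately available; propagating the approximation $\psi_n$ through the explicit Alexander formula is the mechanism that transfers the Hamiltonian nature of the construction from the smooth case down to the $C^0$ setting.
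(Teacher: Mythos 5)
Your proof is correct and follows essentially the same route as the paper: apply the Alexander rescaling directly to~$\psi$ and to a sequence of Hamiltonian approximants~$\psi_n\in\Ham_c(B_r^{2n})$, and pass the Hamiltonian property through the~$C^0$-limit. The only cosmetic difference is that where you verify~$\phi_n^t$ is symplectic by a pullback computation and then invoke simple-connectedness of the ball (vanishing flux) to conclude it is Hamiltonian, the paper instead exhibits the rescaled generating Hamiltonian~$H_{k,s}(t,x)=s^2 H_k(t,x/s)$ explicitly.
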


\begin{proof}
	This lemma is actually easier to prove than Lemma \ref{lem: clever Alexander's trick} since we do not have to take care of the smoothness of the family~$\phi^t$. We assume first that~$r=1$.
	We want to show that the family 
	$$\phi^s(x)=\begin{cases} s\psi(\frac{x}{s}) \text{ if } \vert x \vert \leq s,\\
		x \text{ otherwise}
	\end{cases}$$
	satisfies the conclusion of Lemma \ref{lem: clever Alexander's trick for homeo}. It is easily seen that~$\Vert\phi^t\Vert\leq t \Vert \psi \Vert_{C^0}$ we now only need to check that at all time~$\psi^t$ is indeed in the group~$\overline{\Ham}(B_r^{2n})$.
	
	Let~$(\psi_k)_{k\in \mathbb{N}}$ be a family of Hamiltonian diffeomorphisms generated by the Hamiltonians~$(H_k)_k$ approximating~$\psi$ for the~$C^0$-norm. Then,~$\phi^s$ can be approximated by~$\psi_k^s$, where~$\psi_k^s$ is defined as
	$$\psi_k^s(x)=\begin{cases}s \psi_k(\frac{x}{s}) \text{ if } \vert x \vert \leq s,\\
		x \text{ otherwise.}
	\end{cases}$$
	However, for all~$s>0$,~$\psi_k^s$ is generated by the Hamiltonian
	$$ H_{k,s}(t,x)=\begin{cases} s^2 H_k(t,\frac{x}{s}) \text{ if } \vert x \vert \leq s,\\
	0 \text{ otherwise.}
	\end{cases}$$
	Hence, proving that~$\phi^s$ is indeed approximated by Hamiltonian diffeomorphisms and thus finishing the proof of the lemma.
\end{proof}


\section{Proof of the $C^0$-fragmentation property}\label{sec: pf of the frag prop}

In this section we prove Theorem \ref{thm: fragmentation lemma on surfaces with Lipschitz estimate} and Theorem \ref{thm: fragmentation lemma for Homeo}. In order to prove the~$C^0$-fragmentation property it will be easier to work on a refinement of the given open covering~$\mathcal{W}$, we will first prove the~$C^0$-fragmentation property for this refinement and then go back to the initial covering by applying a trick that allows us to switch places of diffeomorphisms in the decomposition despite the lack of commutativity a priori. The refinement of the covering is obtained by associating an open set to each vertex, edge and face of a given triangulation~$T$, if the triangulation is taken thin enough the resulting covering will indeed be a refinement. This construction is described in Section \ref{subsec: def of covering} and is due to Thurston and Banyaga. In Section \ref{subsec: pf of the thm} we will decompose any~$C^0$-small diffeomorphism according to the new covering by defining the fragments we need in this order: around the vertices, the edges and then the faces. The construction of the fragments is where Lemma \ref{lem: new version of the extension lemma} will be useful. More precisely in Section \ref{subsec: 2 coro} we will describe two slight modifications of the extension lemma, namely the area-preserving extension lemma for disks and the area-preserving extension lemma for rectangles. It is however not possible to apply naively the two extension lemmas, there is some obstruction to it given by the area between a curve and the image of it (see. the area condition in Lemma \ref{lem: new version of the extension lemma}), this area represents an obstruction to the extension, we will take care of it in Section \ref{subsec: def of the obstr}.


\subsection{Covering associated to a triangulation}\label{subsec: def of covering}

In this section we associate to a triangulation~$T=(\Delta_i^k)_{i\in I_k}$,~$k=0,1,2$ ($\Delta_k^i$ is a face of dimension $k$ of the triangulation) three open coverings~$\mathcal{V}=(V_i^k)_{i\in I_k},\,\mathcal{V}'=(V_i^{'k})_{i\in I_k}$ and~$\mathcal{U}=(U_i^k)_{i\in I_k}$, following the construction of Thurston and Banyaga \cite{Ban78}. The covering~$\mathcal{V}$ verify the fact that for each~$k=0,1,2$ the elements of the subset~$(V_i)_{i\in I_k}^k$ are pairwise disjoint,~$\mathcal{U}$ and~$\mathcal{V'}$ are obtained by successively thickening~$\mathcal{V}$ and will also verify the same condition as~$\mathcal{V}$.

\medskip

We build the covering~$\mathcal{V}$ by induction on the skeletons of the triangulation, an example is shown in Figure \ref{fig: covering associated to a triangulation}. The base case is easy, the disks~$V_i^0$ are balls containing~$\Delta_i^0$ and such that \smash[t]{$V_i^0\cap V_j^0\neq \emptyset$} whenever~$i\neq j$, they are colored in red in Figure \ref{fig: covering associated to a triangulation}. 

For the inductive step, let us assume that we have already constructed the disks \smash[t]{$(V_i^\ell)_{i\in I_\ell},\, \ell=0,1,\ldots, k-1$} such that
$$\smash[t]{\widetilde{\Delta}_i^k:=\Delta_i^k-\bigcup_{\ell \leq k-1} V^\ell}$$
is a contraction of \smash[t]{$\Delta_i^1$} (i.e. there exists a contraction $f_i : \Delta^1_i \to \widetilde{\Delta}^k_i$ of factor $0 \leq k < 1$), where \smash[t]{$V^{\ell}=\bigcup_{j \in I_\ell} V_j^\ell$}. Let \smash[t]{$\widehat{\Delta}_i^k$} be a small thickening of \smash[t]{$\widetilde{\Delta}_i^k$. Then~$V_i^k$} is defined as a tubular neighborhood of \smash[t]{$\widehat{\Delta}_i^k$}. If this tubular neighborhood is small enough the open sets will verify~$V_i^k \cap V_j^k=\emptyset$ for all~$i\neq j$. This finishes the proof by induction.

\medskip

Now that~$\mathcal{V}$ is defined we define \smash[t]{$\mathcal{V}'=(V_i^{'k})_{i\in I_k}$ and~$\mathcal{U}=(U_i^k)_{i\in I_k}$}, two other open coverings obtained by thickening the \smash[t]{$V_i^k$} such that the following conditions hold:
$$ \forall k\leq 2, i\in I_k,\, \overline{V_i^k}\subset V_i^{'k} \subset \overline{V_i^{'k}}\subset U_i^k \text{ and } V_i^{'k}\cap V_j^{'k}=U_i^k\cap U_j^k=\emptyset,$$
whenever~$i\neq j$.

\begin{figure}[!ht]

	\centering
	\begin{tikzpicture}[line cap=round,line join=round,x=1cm,y=1cm,scale=1]
		\clip(-6.579903518410136,-3.6515633565121264) rectangle (7.964865357968676,6.0666902873463515);
		\draw [line width=1pt] (-2.64,-0.98)-- (-0.9,3.58);
		\draw [line width=1pt] (-0.9,3.58)-- (3.76,-1.74);
		\draw [line width=1pt] (3.76,-1.74)-- (-2.64,-0.98);
		\draw [line width=1pt, color=red] (-0.9,3.58) circle (1.5811264121690247cm);
		\draw [line width=1pt, color=red] (-2.64,-0.98) circle (1.7658262059841887cm);
		\draw [line width=1pt, color=red] (3.76,-1.74) circle (1.6410621407310344cm);
		\draw [line width=1pt, color=green] (-1.7044709017764281,-0.7002145150047672)-- (-1.7960138992606711,-1.471102914872077);
		\draw [line width=1pt, color=green] (-1.7960138992606711,-1.471102914872077)-- (2.3567077378581818,-1.964238609279941);
		\draw [line width=1pt, color=green] (2.3567077378581818,-1.964238609279941)-- (2.448250735342425,-1.193350209412631);
		\draw [line width=1pt, color=green] (2.448250735342425,-1.193350209412631)-- (-1.7044709017764281,-0.7002145150047672);
		\draw [line width=1pt, color=green] (-0.4431991618853157,2.417733212444913)-- (0.19198930398611536,2.974120101723272);
		\draw [line width=1pt, color=green] (2.7307052578368416,-1.2056941508430437)-- (3.365893723708273,-0.6493072615646848);
		\draw [line width=1pt, color=green] (-0.4431991618853157,2.417733212444913)-- (2.7307052578368416,-1.2056941508430437);
		\draw [line width=1pt, color=green] (3.365893723708273,-0.6493072615646848)-- (0.19198930398611536,2.974120101723272);
		\draw [line width=1pt, color=green] (-2.6828185840440613,0.27813316726286486)-- (-1.7699439755852873,-0.07020056491219367);
		\draw [line width=1pt, color=green] (-1.6147385142340043,3.077239557109911)-- (-0.7018639057752298,2.728905824934852);
		\draw [line width=1pt, color=green] (-1.6147385142340043,3.077239557109911)-- (-2.6828185840440613,0.27813316726286486);
		\draw [line width=1pt, color=green] (-1.7699439755852873,-0.07020056491219367)-- (-0.701863905775228,2.728905824934852);
		\draw [line width=1pt, color=green] (-3.5959430874938523,-0.5697681573690828)-- (-2.932918429414962,0.01815637299208195);
		\draw [line width=1pt, color=green] (-2.6670524365913075,-2.121711316060441)-- (-1.7963734825665252,-1.749760263087838);
		\draw [line width=1pt, color=green] (-1.6136671969896583,4.26400257475467)-- (-1.851223054116573,3.311010982292524);
		\draw [line width=1pt, color=green] (3.7813810810797603,-0.6026888570941396)-- (4.671094827667649,-1.058941998502897);
		\draw [line width=1pt, color=green] (2.9589863836992905,-2.0046780913616087)-- (3.8912457421943403,-2.5733377830758792);
		\draw [line width=1pt, color=green] (-0.3347841466017391,4.289358664560159)-- (-0.055063701873138415,3.250240499716883);
		\draw [line width=1pt, color=green] (-1.851223054116573,3.311010982292524)-- (-4.854352333855643,4.059612497514902);
		\draw [line width=1pt, color=green] (-1.6136671969896583,4.26400257475467)-- (-4.848199066272632,5.070286698024477);
		\draw [line width=1pt, color=green] (-4.865687491157133,2.197812910745057)-- (-2.932918429414962,0.01815637299208195);
		\draw [line width=1pt, color=green] (-3.5959430874938523,-0.5697681573690828)-- (-4.87376380452535,0.871278440015578);
		\draw [line width=1pt, color=green] (-2.6670524365913075,-2.121711316060441)-- (-2.06883230399965,-3.5220506218879635);
		\draw [line width=1pt, color=green] (-1.7963734825665252,-1.749760263087838)-- (-1.0365723431311509,-3.528335309625367);
		\draw [line width=1pt, color=green] (5.863724534757921,4.843520392051895)-- (-0.055063701873138415,3.250240499716883);
		\draw [line width=1pt, color=green] (3.7813810810797603,-0.6026888570941396)-- (5.855187578138971,3.441325267389073);
		\draw [line width=1pt, color=green] (4.671094827667649,-1.058941998502897)-- (5.841686439635061,1.2237632681219095);
		\draw [line width=1pt, color=green] (3.8912457421943403,-2.5733377830758792)-- (3.292638699073541,-3.5546927588929758);
		\draw [line width=1pt, color=green] (2.018240992494049,-3.5469338687463727)-- (2.9589863836992905,-2.0046780913616087);
		\draw [line width=1pt, color=green] (5.8705205505457245,5.9597659851985725)-- (-0.3347841466017391,4.289358664560159);
		\draw [line width=1pt] (-2.64,-0.98)-- (-4.869725647841241,1.5345456753803177);
		\draw [line width=1pt] (-2.64,-0.98)-- (-1.5527023235654007,-3.525192965756665);
		\draw [line width=1pt] (3.76,-1.74)-- (2.6554398457837953,-3.5508133138196745);
		\draw [line width=1pt] (3.76,-1.74)-- (5.848437008887016,2.3325442677554955);
		\draw [line width=1pt] (-0.9,3.58)-- (5.867122542651822,5.401643188625233);
		\draw [line width=1pt] (-0.9,3.58)-- (-4.8512757000641376,4.564949597769689);
		\draw [line width=1pt, color=blue] (-0.8565695771444796,3.1968704193613005)-- (-2.2914,-0.732822);
		\draw [line width=1pt, color=blue] (-2.291479511137008,-0.7328261044136882)-- (3.1383501254483552,-1.40136368729657);
		\draw [line width=1pt, color=blue] (3.1383501254483552,-1.40136368729657)-- (-0.8565695771444796,3.1968704193613005);
		\draw [line width=1pt, color=blue] (-2.7480417628619036,-0.5045449785512407)-- (-4.867942771515415,1.8273831118973165);
		\draw [line width=1pt, color=blue] (-2.7480417628619036,-0.5045449785512407)-- (-1.2479086500515328,3.343622571701445);
		\draw [line width=1pt, color=blue] (-1.2479086500515328,3.343622571701445)-- (-4.8351834850328546,4.321970253969076);
		\draw [line width=1pt, color=blue] (-0.4978,3.4088)-- (3.6438,-1.2709);
		\draw [line width=1pt, color=blue] (3.643,-1.2709)-- (5.85161,2.85440);
		\draw [line width=1pt, color=blue] (-0.4978,3.40884)-- (5.8655,5.1372);
		\begin{scriptsize}
			\draw [fill=black] (-2.64,-0.98) circle (2pt);
			\draw[color=black] (-2.9,-1.3) node {$\Delta^0_0$};
			\draw [fill=black] (-0.9,3.58) circle (2pt);
			\draw[color=black] (-0.7750406036221804,3.9224782837097916) node {$\Delta^0_1$};
			\draw [fill=black] (3.76,-1.74) circle (2pt);
			\draw[color=black] (4.2,-1.7) node {$\Delta^0_2$};
		\end{scriptsize}	
	
	\end{tikzpicture}
	\caption{The triangulation~$T$ is represented in black, the boundary of the open sets of the form~$V_i^0$ are painted in red, the boundary of the open sets of the form~$V_i^1$ are painted in green and the boundary of the open sets of the form~$V_i^2$ are painted in blue.}
	\label{fig: covering associated to a triangulation}
\end{figure}

Furthermore, if~$T$ is chosen such that for any simplex~$\sigma$ in~$T$, its \textit{star} (the union of all of the simplices touching~$\sigma$) is inside an open set of the covering~$\mathcal{W}$, then the open coverings~$\mathcal{V}$,~$\mathcal{V'}$ and~$\mathcal{U}$ are refinement of~$\mathcal{W}$. We fix now, and for the rest of the paper,~$T=(\Delta_i^k)_{i \in I_k}$ a triangulation with the above property along with~$\mathcal{V}=(V_i^k)_{i\in I_k}$,~$\mathcal{V'}=(V_i^{'k})_{i\in I_k}$ and~$\mathcal{U}=(U_i^k)_{i\in I_k}$ the open coverings associated to~$T$.


\subsection{Two corollaries of the area-preserving extension lemma for the annulus}\label{subsec: 2 coro}

As announced before, in this section we will prove two corollaries of Lemma \ref{lem: new version of the extension lemma} that will suit better our needs for the proof of the~$C^0$-fragmentation property. The corollaries are improved versions of Lemmas 2 and 3 in \cite[section 1.6.1]{EPP} and we will mimic their proofs.

\begin{coro}[Area-preserving extension lemma for disks]\label{coro: area-preserving extension lemma for disks}
Let~$D_1\subset D_2 \subset D_3 \subset \mathbb{R}^2$ be closed disks such that~$D_1\subset \text{Interior} (D_2)\subset D_2 \subset \text{Interior}(D_3)$. Let~$\phi: D_2 \rightarrow D_3$ be a smooth area-preserving embedding. If~$\phi$ is sufficiently~$C^0$-small, then there exists~$\psi \in \Ham(D_3)$ such that 

$$\psi\vert _{D_1}=\phi\vert_{D_1} \text{ and } \Vert\psi\Vert_{C^0}\leq C \Vert\phi\Vert_{C^0},$$
for some constant~$C>0$.
\end{coro}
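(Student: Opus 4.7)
The strategy is to reduce the corollary to the annulus extension lemma (Lemma~\ref{lem: new version of the extension lemma}), applied in a neighborhood of $\partial D_1$ inside $D_3$.

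First I would construct an area-preserving chart $\chi \colon \mathcal{A} \to \mathrm{Op}(\mathbb{A}_2)$, where $\mathcal{A}$ is an open annular neighborhood of $\partial D_1$ in $D_3$, enjoying the following properties: $\chi$ sends $\partial D_1$ to the core $S^1 \times \{0\}$ of $\mathbb{A}_2$, the preimage $\chi^{-1}(\mathbb{A}_1)$ is contained in $D_2$, and $\chi^{-1}(\mathbb{A}_2)$ is contained in $D_3$. Such a chart exists after a constant rescaling of the area form on $\mathbb{A}_2$ so that total areas match, by a standard application of Moser's theorem. For $\phi$ sufficiently $C^0$-small, $\phi$ maps $\chi^{-1}(\mathrm{Op}(\mathbb{A}_1))$ into $\mathcal{A}$, and the conjugate $\widetilde{\phi} := \chi \circ \phi \circ \chi^{-1}$ defines a smooth area-preserving embedding $\mathrm{Op}(\mathbb{A}_1) \to \mathbb{A}_2$ homotopic to the inclusion, with $\Vert \widetilde{\phi} \Vert_{C^0} \leq L \Vert \phi \Vert_{C^0}$ for some constant $L$ depending only on the Lipschitz constants of $\chi$ and $\chi^{-1}$ on the compact set $\overline{\mathcal{A}}$.

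To verify the signed area condition, I take $C := S^1 \times \{0\}$, so that $\gamma := \chi^{-1}(C) = \partial D_1$. Since $\chi$ is area-preserving and $D_3$ is contractible (so $H_2(D_3) = 0$), the 2-chain in $\mathbb{A}_2$ bounded by $\widetilde{\phi}(C) - C$ and the 2-chain $\phi(D_1) - D_1$ in $D_3$ share the same boundary after transport by $\chi$; their $\omega$-integrals therefore coincide, so the signed area in $\mathbb{A}_2$ between $C$ and $\widetilde{\phi}(C)$ equals $\mathrm{Area}(\phi(D_1)) - \mathrm{Area}(D_1)$, which vanishes by area preservation of $\phi$. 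Lemma~\ref{lem: new version of the extension lemma} therefore applies and yields $\widetilde{\psi} \in \mathrm{Ham}(\mathbb{A}_2)$ with $\widetilde{\psi}\vert_{\mathbb{A}_r} = \widetilde{\phi}\vert_{\mathbb{A}_r}$, where $r := 1 - D \Vert \widetilde{\phi} \Vert_{C^0}$, together with the estimate $\Vert \widetilde{\psi} \Vert_{C^0} \leq C \Vert \widetilde{\phi} \Vert_{C^0}$.

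Finally, the pullback $\psi_1 := \chi^{-1} \circ \widetilde{\psi} \circ \chi$ is a diffeomorphism of $\mathcal{A}$ that is the identity near the boundary of $\chi^{-1}(\mathbb{A}_2)$, so it extends by the identity to a smooth area-preserving diffeomorphism of $D_3$. Because $\chi(\partial D_1) = S^1 \times \{0\} \in \mathbb{A}_r$, one has $\psi_1 = \phi$ on a neighborhood of $\partial D_1$. I then define
\[
\psi(x) := \begin{cases} \phi(x) & \text{if } x \in D_1, \\ \psi_1(x) & \text{if } x \in D_3 \setminus D_1; \end{cases}
\]
the two definitions agree smoothly along $\partial D_1$ (both equal $\phi$ there), so $\psi$ is a smooth area-preserving diffeomorphism of $D_3$ that is the identity near $\partial D_3$ and therefore lies in $\mathrm{Ham}(D_3)$. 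The estimate $\Vert \psi \Vert_{C^0} \leq C' \Vert \phi \Vert_{C^0}$ follows by combining the annulus lemma's bound on $\widetilde{\psi}$ with the Lipschitz constants of $\chi$ and $\chi^{-1}$. \emph{The main technical point} is the construction of the area-preserving chart $\chi$ with $\chi(\partial D_1) = S^1 \times \{0\}$ and with the correct area normalization; once this is in place, the signed area condition reduces automatically to area preservation of $\phi$ on $D_2$, and the assembly goes through with no further difficulty.
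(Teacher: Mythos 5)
Your argument is correct and takes essentially the same route as the paper: identify an area-preserving collar chart with the standard annulus, verify the vanishing signed-area hypothesis of Lemma~\ref{lem: new version of the extension lemma} from the fact that $\phi$ preserves area, invoke the lemma, and assemble the extension. The only cosmetic difference is that the paper centers its collar on a slightly shrunk $\partial D_2$ and sets $\psi := h^{-1}\circ\phi$ extended by the identity, whereas you center on $\partial D_1$ and glue $\phi|_{D_1}$ with the pushed-back annulus extension outside (your gluing tacitly uses a Jordan-curve step to see that $\psi_1(D_1)=\phi(D_1)$, which holds for $\phi$ $C^0$-small).
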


\begin{rk}
The Corollary \ref{coro: area-preserving extension lemma for disks} can be completely adapted in the continuous setting if we ask to extend~$\phi$, a continuous measure-preserving embedding, by~$\psi$, an element of~$\Ker(\theta)$. We can then prove it by adapting the proof and using Lemma \ref{lem: area-preserving extension lemma for homeo} instead of Lemma \ref{lem: new version of the extension lemma}.
\end{rk}

\begin{proof}[Proof of Corollary \ref{coro: area-preserving extension lemma for disks}]
We mimic the proof in \cite{EPP}, adding only the sharper estimate of Lemma \ref{lem: new version of the extension lemma}.

Up to replacing~$D_2$ by a slightly smaller disk, we can assume that~$\phi$ is defined in a neighborhood of~$D_2$. Identify some small neighborhood of~$\partial D_2$ with~$\mathbb{A}_2$ so that~$\partial D_2$ is identified with~$S^1\times 0\subset \mathbb{A}_1 \subset \mathbb{A}_2$. If~$\phi$ is~$C^0$-small enough, we have~$\phi(\mathbb{A}_1)\subset \text{Interior}(\mathbb{A}_2)\subset \text{Interior}(D_3)\backslash \phi(D_1)$. Denote~$\delta: = \Vert\phi\Vert_{C^0}$.

Apply Lemma \ref{lem: new version of the extension lemma} and find~$h\in \text{Ham}(\mathbb{A}_2)$,

$$\Vert h\Vert_{C^0}\leq C \Vert\phi\Vert_{C^0},$$
for some constant~$C>0$ and so that~$h\vert_{\mathbb{A}_{1-D\delta}}=\phi$. Set~$\phi_1:=h^{-1}\circ \phi \in \text{Ham}(D_3)$. Note that~$\phi_1\vert_{D_1}=\phi$ and~$\phi_1$ is the identity on~$\mathbb{A}_{1-D\delta}$. Therefore we can extend~$\phi_1\vert_{D_2\cup \mathbb{A}_1}$ to~$D_3$ by the identity and get the required~$\psi$.
\end{proof}

\begin{coro}[Area-preserving extension lemma for rectangles]\label{coro: area-preserving extension lemma for rectangles}

Let~$\Pi_3=[0,R]\times [-c_3,c_3]$ be a rectangle and let~$\Pi_1\subset \Pi_2\subset\Pi_3$ be two smaller rectangles of the form~$\Pi_i=[0,R]\times [-c_i,c_i], (i=1,2)$,~$0<c_1<c_2<c_3$. Let~$\phi: \Pi_2 \rightarrow \Pi_3$ be a smooth area-preserving embedding such that 

\begin{itemize}
    \item~$\phi$ is the identity near~$0\times[-c_2,c_2]$ and~$R\times [-c_2,c_2]$,
    \item The area in~$\Pi_3$ bounded by the curve~$[0,R]\times y$ and its image under~$\phi$ is zero for some (and hence for all)~$y\in[-c_2,c_2]$.
\end{itemize}

If~$\phi$ is sufficiently~$C^0$-small, then there exists~$\psi \in \text{Ham}(\Pi_3)$ such that 

$$\psi \vert _{D_1}=\phi\vert_{D_1} \text{ and } \Vert\psi\Vert_{C^0}\leq C \Vert\phi \Vert_{C^0},$$
for some constant~$C>0$.

\end{coro}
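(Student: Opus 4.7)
The approach mirrors that of Corollary \ref{coro: area-preserving extension lemma for disks}, but it replaces the round gluing at the boundary of a disk with a localized use of the ``moreover'' clause of Lemma \ref{lem: new version of the extension lemma}, which was engineered precisely for maps supported in a quadrilateral.

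The plan is, first, to embed the rectangle $\Pi_3$ as a quadrilateral $I \times [-c_3,c_3]$ inside an annulus $\mathbb{A} := S^1 \times [-c_3, c_3]$, choosing $S^1$ of circumference strictly greater than $R$ and $I \subset S^1$ an arc of length $R$. Next, shrinking $\Pi_2$ slightly so that $\phi$ is defined on an open neighborhood, and using that $\phi$ agrees with the identity near $\{0, R\} \times [-c_2, c_2]$, I would extend $\phi$ by the identity on $(S^1 \setminus I) \times [-c_2, c_2]$ to an area-preserving embedding $\tilde\phi$ of a neighborhood of $S^1 \times [-c_2, c_2]$ into $\mathbb{A}$. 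This $\tilde\phi$ is homotopic to the inclusion and equal to the identity outside $I \times [-c_2, c_2]$. The signed-area hypothesis transfers cleanly: for any $y \in (-c_2, c_2)$, the curves $\tilde\phi(S^1 \times \{y\})$ and $S^1 \times \{y\}$ agree outside $I$, so the signed region they bound in $\mathbb{A}$ equals the one bounded in $\Pi_3$ by $[0,R] \times \{y\}$ and $\phi([0,R] \times \{y\})$, which is zero.

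Now Lemma \ref{lem: new version of the extension lemma} applies to $\tilde\phi$ in the rescaled form appropriate to $S^1 \times [-c_3, c_3]$ (an immediate area-preserving linear change of coordinates from the stated version). The ``moreover'' clause, which is applicable because $\tilde\phi$ is supported in the quadrilateral $I \times [-c_2, c_2]$ and maps it into $I \times [-c_3, c_3]$ for $\phi$ sufficiently $C^0$-small, yields constants $\delta, D, C > 0$ and, for $\Vert \phi \Vert_{C^0} \leq \delta$, a diffeomorphism $\psi \in \Ham(\mathbb{A})$ equal to the identity outside $I \times [-c_3, c_3]$ and satisfying
\[
\psi|_{\mathbb{A}_{c_2 - D\Vert\phi\Vert_{C^0}}} = \tilde\phi|_{\mathbb{A}_{c_2 - D\Vert\phi\Vert_{C^0}}}, \qquad \Vert \psi \Vert_{C^0} \leq C \Vert \phi \Vert_{C^0}.
\]
Since $\psi$ is the identity off $I \times [-c_3, c_3] \cong \Pi_3$, its restriction to $\Pi_3$ lies in $\Ham(\Pi_3)$. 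Further shrinking $\delta$ so that $D\delta < c_2 - c_1$, we have $\Pi_1 = I \times [-c_1, c_1] \subset \mathbb{A}_{c_2 - D\Vert \phi \Vert_{C^0}}$ for every admissible $\phi$, and on this quadrilateral $\tilde\phi$ agrees with $\phi$ by construction. Therefore $\psi|_{\Pi_1} = \phi|_{\Pi_1}$, as desired.

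I do not expect any substantive obstacle: the analytic heart of the result has already been isolated in Lemma \ref{lem: new version of the extension lemma}, and this corollary is essentially geometric bookkeeping. The only point worth checking is that the rescaling from $\mathbb{A}_2$ to $S^1 \times [-c_3, c_3]$ really preserves the validity of the lemma; since an area-preserving linear change of coordinates only affects the values of the three constants $\delta, D, C$, this is routine.
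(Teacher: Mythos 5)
Your proposal is correct and is essentially the paper's proof: the paper says only one line, that one identifies the rectangle $\Pi_3$ with the subset $I\times[-2,2]\subset\mathbb{A}_2$ and invokes the ``moreover'' clause of Lemma \ref{lem: new version of the extension lemma}. You have spelled out, correctly, the steps that the paper leaves implicit -- extending $\phi$ by the identity along $(S^1\setminus I)\times[-c_2,c_2]$, checking the transfer of the zero signed-area hypothesis, and noting that the resulting Hamiltonian diffeomorphism of the annulus supported in $I\times[-c_3,c_3]\cong\Pi_3$ restricts to an element of $\Ham(\Pi_3)$ because $\Pi_3$ is simply connected.
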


\begin{rk}
Again Corollary \ref{coro: area-preserving extension lemma for rectangles} transposes completely in the continuous setting by taking~$\phi$ being a continuous measure-preserving embedding and extending it by~$\psi \in \Ker(\theta)$.
\end{rk}

\begin{proof}[Proof of Corollary \ref{coro: area-preserving extension lemma for rectangles}]
The proof relies on the last assumption of Lemma \ref{lem: new version of the extension lemma}, indeed we can identify the rectangle~$\Pi_3$ with the subset $I \times [-2,2] \subset \mathbb{A}_2$.
\end{proof}


\subsection{Definition of two obstructions}\label{subsec: def of the obstr}

In this section let~$(\Sigma,\omega)$ be a symplectic surface and~$T$,~$\mathcal{U}$,~$\mathcal{V}$ and~$\mathcal{V'}$ be a triangulation and 3 open coverings associated to it as defined in Section \ref{subsec: def of covering}. We will define the flux for an annulus, we call this quantity
$$\mathcal{O}: \Sympoc(\mathbb{A}_1)\rightarrow \mathbb{R}.$$ 
Thus,~$\mathcal{O}$ represents the obstruction for a diffeomorphism~$\phi \in \Sympc(\mathbb{A}_1)$ to belong to~$\Ham(\mathbb{A}_1)$. We also define~$\mathcal{A}_{i_1,i_2}(\phi)$ for~$i_1$ and~$i_2$ in~$I_0$ which will represent the obstruction of the extension of a certain embedding of~$V_j^{1}$ in~$V_j^{'1}$ if the two vertices~$\Delta_{i_1}^0$ and~$\Delta_{i_2}^0$ are joined by an edge~$\Delta_j^1$ (see Corollary \ref{coro: area-preserving extension lemma for rectangles}). The obstruction $\mathcal{A}$ can be seen as a way to define $\mathcal{O}$ on a general symplectic surface. 

\begin{dfn}[Definition of~$\mathcal{O}$]\label{def: definition of O}
Let~$\phi \in \Sympoc(\mathbb{A}_1)$,~$A \in S^1\times \{-1\}$ and~$B \in S^1\times \{1\}$ two points on the boundary of~$\mathbb{A}_1$ as in Figure \ref{fig: obstruction O} and~$\gamma=[0,1]\rightarrow \mathbb{A}_1, \, t\mapsto \gamma(t)$ an arc with endpoints~$\gamma(0)=A$ and~$\gamma(1)=B$. Then let~$h:[0,1]_s\times [0,1]_t\rightarrow \mathbb{A}_1$ be a smooth homotopy with fixed endpoints from~$\gamma$ to~$\phi(\gamma)$ such that~$h_{0,t}=\gamma(t)$ and~$h_{1,t}=\phi(\gamma(t))$. We can then define the quantity:
$$\mathcal{O}(\phi):=\int_0^1\int_0^1\omega(\partial_sh_{s,t},\partial_t h_{s,t}) \, ds \, dt.$$
\end{dfn}

\begin{rk}\label{rk: def of O}
We can also define~$\mathcal{O}$ in a more general setting by defining it as the area of a 2-cell defined by a continuous homotopy~$h$ and consider~$\omega$ as a measure, and even replace it by any Oxtoby-Ulam measure (see Section \ref{subsubsec: C^0 frag for homeo}). This will be useful for Theorem \ref{thm: fragmentation lemma for Homeo}.
\end{rk}

\begin{figure}
    \centering
    \includegraphics[scale=0.55]{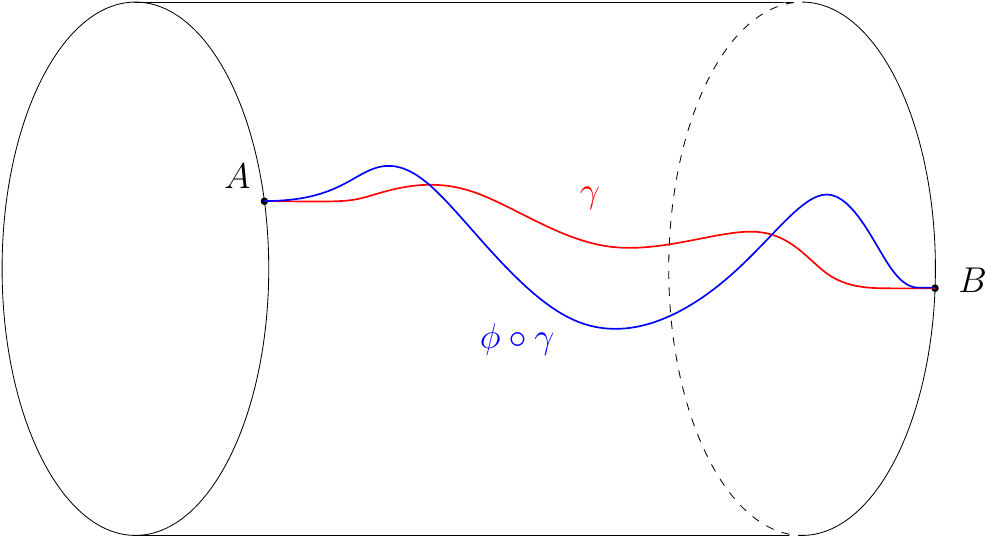}
    \caption{The obstruction $\mathcal{O}$ is the signed area between the red curve and the blue curve.}
    \label{fig: obstruction O}
\end{figure}

The next proposition gives some properties of~$\mathcal{O}$, those properties can also be proven in a continuous setting. This proposition is standard, and we omit the proof

\begin{prop}\label{prop: Properties of O}

The number $\mathcal{O}$ is well-defined, i.e. it does not depend on the choices of~$h$,~$\gamma$ nor on the choice of the points~$A$ and~$B$.

Moreover,~$\mathcal{O}$ is exactly the obstruction for~$\phi\in \Sympoc(\mathbb{A}_1)$ to be in~$\Ham(\mathbb{A}_1)$, that is~$\phi \in \Ham(\mathbb{A}_1)$ if and only if~$\mathcal{O}(\phi)=0$.
\end{prop}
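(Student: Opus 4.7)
The plan is to work throughout with the symplectic-isotopy representation of $\phi$. Pick an isotopy $\{\phi_s\}_{s\in[0,1]} \subset \Sympoc(\mathbb{A}_1)$ with $\phi_0 = Id$, $\phi_1 = \phi$, and take the canonical homotopy $h_{s,t} := \phi_s(\gamma(t))$. Because $A, B \in \partial \mathbb{A}_1$ are fixed by every compactly supported $\phi_s$, the $s$-edges $h(s,0) \equiv A$ and $h(s,1) \equiv B$ are constant paths. For two such homotopies $h, h'$ (possibly coming from different isotopies), gluing along the matching $s=0,1$ edges and collapsing the constant $t$-edges yields a map $F: S^2 \to \mathbb{A}_1$, and Stokes together with the exactness of $\omega$ on the annulus gives $\int_h\omega - \int_{h'}\omega = \int_{S^2}F^*\omega = 0$. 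Independence of $\gamma$, within the class of embedded arcs joining $S^1\times\{-1\}$ to $S^1\times\{1\}$, follows because two such arcs bound a 2-chain $R \subset \mathbb{A}_1$, whence $\mathcal{O}_\gamma(\phi) - \mathcal{O}_{\gamma'}(\phi) = \int_R\omega - \int_{\phi(R)}\omega = 0$ by $\phi^*\omega = \omega$; independence of $A, B$ follows because sliding $A$ along $S^1\times\{-1\}$ only adds a degenerate sliver bounded on one side by a boundary arc fixed pointwise by $\phi$.

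For the equivalence $\mathcal{O}(\phi) = 0 \Leftrightarrow \phi \in \Ham(\mathbb{A}_1)$, I would unpack the integrand using $\partial_s h = X_s\circ \phi_s$ and $\partial_t h = (d\phi_s)\gamma'$, where $X_s$ is the time-dependent generator of $\{\phi_s\}$, to obtain
$$\mathcal{O}(\phi) = \int_0^1 \int_{\phi_s\circ\gamma} \iota_{X_s}\omega \, ds.$$
Since $\iota_{X_s}\omega$ is closed (the isotopy being symplectic) and $\phi_s\circ\gamma$ is homotopic rel endpoints to $\gamma$ through the family $r\mapsto\phi_{(1-r)s}\circ\gamma$, the inner integral is unchanged upon replacing $\phi_s\circ\gamma$ by $\gamma$. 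Fubini then yields
$$\mathcal{O}(\phi) = \int_\gamma \Bigl(\int_0^1 \iota_{X_s}\omega\, ds\Bigr) = \bigl\langle\wFlux(\{\phi_s\}), [\gamma]\bigr\rangle,$$
where $[\gamma]$ generates $H_1(\mathbb{A}_1, \partial\mathbb{A}_1)\cong \mathbb{Z}$ and pairs nontrivially with the generator of $H^1(\mathbb{A}_1,\mathbb{R})\cong \mathbb{R}$. Since the flux group $\Gamma$ is trivial for $\Sympoc(\mathbb{A}_1)$, $\wFlux$ descends to $\Flux$, and the characterization $\Ham = \Ker(\Flux)$ recalled earlier yields the stated equivalence.

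The principal obstacle is the $\gamma$-independence step: a priori, the cycle $\gamma\ast\overline{\gamma'}$ could wind around the core of $\mathbb{A}_1$, in which case $\mathcal{O}$ would pick up a correction equal to a multiple of the total $\omega$-area of the annulus. This is sidestepped by restricting (as the figure implicitly does) to embedded arcs joining the two boundary components, i.e.\ to a single relative homotopy class. Once that convention is fixed, everything else reduces to Stokes together with $\phi^*\omega = \omega$, and the identification with flux is dictated by the one-dimensional cohomology of the annulus.
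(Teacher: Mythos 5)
The paper actually labels this proposition ``classic'' and gives no proof, so your argument has to stand on its own. The overall route you take --- interpreting $\mathcal{O}(\phi)$ as a flux pairing and invoking $\Ham=\Ker(\Flux)$ --- is the right one, and the $h$-independence step (gluing the two squares along the shared $s$-edges, collapsing the constant $t$-edges, and applying Stokes together with $H_2(\mathbb{A}_1)=0$) is correct, provided one reads the definition as requiring $h$ to fix the endpoints, as you implicitly do; this is in fact necessary, since a homotopy that slides the endpoints around $\partial\mathbb{A}_1$ while keeping $s=0,1$ fixed would change the value.

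The $\gamma$-independence step contains a genuine gap, and the fix you propose does not close it. It is \emph{not} true that embedded arcs joining $S^1\times\{-1\}$ to $S^1\times\{1\}$ lie in a single homotopy class rel endpoints: an embedded arc from $A$ to $B$ that spirals once around the core differs from the straight vertical arc by a generator of $H_1(\mathbb{A}_1)\cong\mathbb{Z}$, so no $2$-chain $R$ with $\partial R=\gamma-\gamma'$ exists and the identity $\mathcal{O}_\gamma-\mathcal{O}_{\gamma'}=\int_R\omega-\int_{\phi(R)}\omega$ simply does not apply. The statement is nevertheless true, and what rescues it is the area-preservation you did not use at that point. Fix a primitive $\lambda$ of $\omega$; Stokes (legitimate once $h$ is rel endpoints) gives $\mathcal{O}_\gamma(\phi)=\int_\gamma\alpha$ where $\alpha:=\phi^*\lambda-\lambda$ is a closed $1$-form vanishing near $\partial\mathbb{A}_1$. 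Its period over the core $c:=S^1\times\{0\}$ equals $\int_{\phi(c)}\lambda-\int_c\lambda$, the signed $\omega$-area between $c$ and $\phi(c)$, and this is zero because $\phi$ carries the sub-annulus bounded by $S^1\times\{-1\}$ and $c$ area-preservingly onto the one bounded by $S^1\times\{-1\}$ and $\phi(c)$. Thus $\alpha=df$ with $f$ locally constant near $\partial\mathbb{A}_1$, and $\mathcal{O}_\gamma(\phi)=f(B)-f(A)$ depends only on which boundary components carry $A$ and $B$ --- giving independence of $\gamma$, $A$, $B$ at once and for arbitrary (not only embedded) arcs. Two smaller remarks: for compactly supported isotopies the flux class lives in $H^1_c$ of the open annulus rather than in $H^1(\mathbb{A}_1,\mathbb{R})$, which is what makes both $\int_\gamma$ and its pairing with $[\gamma]\in H_1(\mathbb{A}_1,\partial\mathbb{A}_1)$ well posed; and the vanishing of the flux group $\Gamma$ of $\Sympoc(\mathbb{A}_1)$, which you assert, does need a word, though it is in fact a consequence of your own $h$-independence argument applied to two isotopies differing by a loop.
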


We now state Lemma \ref{lem: surjectivity of O}, it proves the surjectivity of this obstruction. We also give a bound on the norm of a well-chosen preimage of a real number (it will be useful in the proof of Lemma \ref{lem: fragmentation on the 0-skeleton} for example). We will prove Lemma \ref{lem: surjectivity of O} at the end of this proof.

\begin{lemma}[Surjectivity of~$\mathcal{O}$ and an estimate on the norm of a pre-image]\label{lem: surjectivity of O}
With the previous definitions, the obstruction~$\mathcal{O}: \Sympc(\mathbb{A}_1)\rightarrow\mathbb{R}, \, \psi \mapsto \mathcal{O}(\psi)$ is a surjective function. 
	
Moreover, there exists a constant~$C>0$ such that for all~$\varepsilon \in \mathbb{R}$, there exists~$\psi_{\varepsilon}\in \Sympc(\mathbb{A}_1)$ such that~$\mathcal{O}(\psi_{\varepsilon})=\varepsilon$ and~$$\Vert \psi_{\varepsilon} \Vert_{C^0}\leq C\vert \varepsilon\vert.$$
\end{lemma}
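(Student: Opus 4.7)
The plan is to construct $\psi_\varepsilon$ explicitly as a shear of the annulus in the $S^1$-direction. By Moser's theorem we may assume, after rescaling $\chi$ below if necessary, that $\omega = ds \wedge dy$ on $\mathbb{A}_1 = S^1\times[-1,1]$. Fix once and for all a smooth bump function $\chi: [-1,1] \to \mathbb{R}$, compactly supported in $(-1,1)$ and normalised so that $\int_{-1}^1 \chi(y)\,dy = 1$. For each $\varepsilon \in \mathbb{R}$ set
$$\psi_\varepsilon: \mathbb{A}_1 \to \mathbb{A}_1, \qquad (s,y) \mapsto (s + \varepsilon \chi(y), y),$$
where addition in the first coordinate is taken in $S^1$. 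Since $\chi$ vanishes near $\pm 1$, the map $\psi_\varepsilon$ is the identity near $\partial \mathbb{A}_1$; its Jacobian equals $1$, so it preserves $\omega$; and the isotopy $t \mapsto \psi_{t\varepsilon}$, $t\in[0,1]$, connects it to the identity inside $\Sympc(\mathbb{A}_1)$. Hence $\psi_\varepsilon \in \Sympoc(\mathbb{A}_1)$.

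\textbf{Computing the obstruction.} Take the vertical arc $\gamma(t) = (s_0, 2t-1)$ joining $A = (s_0, -1)$ to $B = (s_0, 1)$, and use the straight-line homotopy $h(s,t) = (s_0 + s \varepsilon \chi(2t-1),\, 2t-1)$ from $\gamma$ to $\psi_\varepsilon \circ \gamma$. A direct computation of $\partial_s h$ and $\partial_t h$ gives $\omega(\partial_s h, \partial_t h) = 2\varepsilon \chi(2t-1)$, whence
$$\mathcal{O}(\psi_\varepsilon) = \int_0^1 \int_0^1 2\varepsilon \chi(2t-1)\, ds\, dt = \int_{-1}^1 \varepsilon \chi(y)\, dy = \varepsilon.$$
This already establishes surjectivity of $\mathcal{O}$ and exhibits $\psi_\varepsilon$ as an explicit preimage of $\varepsilon$.

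\textbf{The $C^0$ estimate and main difficulty.} Set $M := \|\chi\|_\infty$ and let $D$ denote the (finite) diameter of $\mathbb{A}_1$. When $M|\varepsilon|$ does not exceed half the length of $S^1$, the distance between $(s,y)$ and $(s+\varepsilon\chi(y), y)$ equals $|\varepsilon\chi(y)| \leq M|\varepsilon|$, giving $\|\psi_\varepsilon\|_{C^0} \leq M|\varepsilon|$. For larger $|\varepsilon|$, the trivial bound $\|\psi_\varepsilon\|_{C^0} \leq D$ is itself dominated by a constant times $|\varepsilon|$, since $|\varepsilon|$ stays bounded away from $0$ in that regime. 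Taking the larger of the two constants yields $\|\psi_\varepsilon\|_{C^0} \leq C|\varepsilon|$ uniformly in $\varepsilon \in \mathbb{R}$. There is no genuine obstacle in the argument; the only point that requires a moment of care is the possible wrap-around of the shear in the $S^1$ factor for large $|\varepsilon|$, which is disposed of by the compactness of $\mathbb{A}_1$.
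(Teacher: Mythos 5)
Your proposal is correct and is essentially the same construction as the paper's: both use the shear $(x,y)\mapsto(x+\varepsilon f(y),y)$ where $f$ is a bump function with unit integral supported in $(-1,1)$ (the paper writes $f=\chi'$ for a function $\chi$ increasing from $0$ at $y=-1$ to $1$ at $y=1$, and obtains $\mathcal O(\phi)=\int_0^1 H_s(1)\,ds=\varepsilon$ by viewing the shear as the time-$1$ map of the ``Hamiltonian-like'' flow of $H(x,y)=\varepsilon\chi(y)$, while you compute $\mathcal O$ directly from its integral definition). The $C^0$ bound is the same Lipschitz estimate, so the two arguments coincide up to presentation.
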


\begin{proof}[Proof of Lemma \ref{lem: surjectivity of O}]
Let~$\varepsilon\in \mathbb{R}$ and define~$H_s^\varepsilon(x,y)= \chi(y)\varepsilon$ a Hamiltonian-like function, where~$\chi$ is a smooth function in~$[-1,1]$, satisfying~$\chi(-1)=0$ and~$\chi(1)=1$ and~$\chi'$ is compactly supported in~$[-1,1]$. We let~$C=\Vert\chi'\Vert_0$. We then define~$\phi_s$ as the flow generated by this Hamiltonian-like function. That is, define the vector field~$X_s$ by the equation
$$\iota_{X_s}\omega=dH_s^\varepsilon=\chi'(y)\varepsilon \,dy.$$

So~$X_s=\chi'(y)\varepsilon \, \frac{\partial}{\partial x}$ and~$\phi_s(x,y)=(x+\varepsilon \chi'(y) s, y)$, where~$x$ is in~$\mathbb{R}/\mathbb{Z}$ and~$x+\varepsilon \chi'(y) s$ is taken in~$\mathbb{R}/\mathbb{Z}$. Then~$\Vert\phi\Vert_{C^0}\leq C \vert\varepsilon\vert$ and~$\mathcal{O}(\phi)=\int_0^1 H_s(1)\, ds=\varepsilon$ as wanted.
\end{proof}

As seen in Remark \ref{rk: def of O} there is an analogous definition of~$\mathcal{O}$ in the continuous setting, it is not hard to see that Proposition \ref{prop: Properties of O} transposes in the continuous setting once we work with~$\Ker(\theta)$ instead of~$\Ham(\mathbb{A}_1)$.

\begin{dfn}[Definition of~$\mathcal{A}$]\label{def: def of A}
Let~$T$,~$\mathcal{U}$ and~$\mathcal{V}$ be as in Section \ref{subsec: def of covering}. Let~$\phi \in \Sympo(\Sigma)$ be $C^0$-small such that for all~$i\in I_0$,~$\phi\vert_{ V_i^{0}}=Id$. Given~$\Delta_{i_1}^0$ and~$\Delta_{i_2}^0$ two vertices in~$T$ linked by an edge~$\Delta_j^1$ parameterized by an arc~$\gamma$ with~$\gamma(0)=\Delta_{i_1}^0$ and~$\gamma(1)=\Delta_{i_2}^0$, if $\phi$ is $C^0$-small enough then $\phi(\Delta^1_j)\subset  U^1_j$. Then, we define:
$$\mathcal{A}_{i_1,i_2}(\phi)=\int_0^1\int_0^1 \omega(\partial_s h_{s,t},\partial_t h_{s,t}),$$
where~$h: [0,1]\times [0,1]\rightarrow U_j^1$ is a homotopy with fixed endpoints from~$\gamma$ to~$\phi(\gamma)$.
\end{dfn}

\begin{rk}\label{rk: def of A}
We can also define~$\mathcal{A}$ via integration on 2-cells instead. Again it allows us to define~$\mathcal{A}$ on the continuous case.
\end{rk}

We prove now several properties of this obstruction~$\mathcal{A}$.

\begin{prop}\label{prop: properties of A}
Let~$T$,~$\mathcal{U}$ and~$\mathcal{V}$ be given as in Section \ref{subsec: def of covering}. Let $\phi$ be a symplectic diffeomorphism of $\Sigma$ connected to the identity. We assume that for all pairs of indices~$i_1$ and~$i_2$ in~$I_0$ such that~$\Delta_{i_1}^0$ and~$\Delta_{i_2}^0$, connected by an edge~$\Delta_j^1$,~$\phi$ is~$C^0$-small enough such that $\phi(\Delta^1_j)\subset  U^1_j$. Moreover $\phi$ verifies that, for all~$i\in I_0$,~$\phi\vert_{ V_i^{0}}=Id$. We have the following four properties:
\begin{itemize}
    \item[(i)]The quantity~$\mathcal{A}_{i_1,i_2}(\phi)$ is well defined, i.e. it does not depend on the choice made in its definition.
    \item[(ii)] The following identity holds:~$$\mathcal{A}_{i_1,i_2}(\phi)=-\mathcal{A}_{i_2,i_1}(\phi).$$
    \item[(iii)] There exists a constant~$C>0$ that does not depend on~$\phi$ such that, 
   ~$$\vert \mathcal{A}_{i_1,i_2}(\phi)\vert\leq C \Vert \phi \Vert_{C^0}.$$
    \item[(iv)] If~$\phi\in \Ham(\Sigma)$, then for a loop of the triangulation i.e. a set of indices~$i_1, i_2, \ldots, i_m$ such that~$i_p$ and~$i_{p+1}$ are linked by an edge in~$T$ (with the convention that~$i_{m+1}=i_1$), we have: 
   ~$$\sum_{p=1}^m \mathcal{A}_{i_p,i_{p+1}}(\phi)=0.$$
\end{itemize}
\end{prop}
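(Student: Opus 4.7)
My plan is to treat the four statements in turn, with (iv) being by far the most subtle and each of (i)--(iii) entering as an ingredient.

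For (i), I note that the ambient region (the disk-shaped tubular neighborhood $U_j^1$, enlarged by small neighborhoods of the two vertices on which $\phi$ is the identity) is simply connected, so in particular $H_2 = 0$. Two fixed-endpoint homotopies $h,h'$ from $\gamma$ to $\phi(\gamma)$ define a $2$-cycle $h-h'$ there, on which $\int\omega$ vanishes by Stokes' theorem together with the closedness of $\omega$. Independence of the parameterization of $\gamma$ follows from the standard change-of-variables formula. Statement (ii) follows by taking the reversed parameterization $\widetilde\gamma(t) := \gamma(1-t)$ and the reversed homotopy $\widetilde h(s,t) := h(s,1-t)$: the substitution $u = 1-t$ in the double integral flips the sign of $\partial_t$ while leaving $\partial_s$ unchanged, so $\mathcal{A}_{i_2,i_1}(\phi) = -\mathcal{A}_{i_1,i_2}(\phi)$.

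For (iii), I would construct $h$ explicitly by geodesic interpolation. For $\phi$ sufficiently $C^0$-close to the identity, so that each $\phi(\gamma(t))$ lies in a normal neighborhood of $\gamma(t)$, set $h(s,t) := \exp_{\gamma(t)}\left(s\cdot \exp_{\gamma(t)}^{-1}(\phi(\gamma(t)))\right)$. Then $|\partial_s h|$ is controlled by the displacement $d(\gamma(t),\phi(\gamma(t))) \leq \Vert\phi\Vert_{C^0}$, and $|\partial_t h|$ is uniformly bounded in terms of the length and $C^1$-norm of $\gamma$ and the Lipschitz constants of $\exp$. The integrand $|\omega(\partial_s h, \partial_t h)|$ is therefore pointwise $\leq C \Vert\phi\Vert_{C^0}$ for some $C > 0$, and integration over $[0,1]^2$ yields the required bound.

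The bulk of the work lies in (iv). Concatenating the fixed-endpoint homotopies $h^p$ at the common vertices (where $\phi = \mathrm{Id}$) yields a $2$-chain $H$ in $\Sigma$ with boundary $\phi(\Gamma) - \Gamma$, where $\Gamma := \gamma_1 \ast \cdots \ast \gamma_m$, satisfying $\int_H \omega = \sum_{p=1}^m \mathcal{A}_{i_p,i_{p+1}}(\phi)$. Since $\phi \in \Ham(\Sigma)$, I pick any Hamiltonian isotopy $\{\phi_H^t\}$ from $\mathrm{Id}$ to $\phi$ and form the $2$-chain $\sigma(s,t) := \phi_H^s(\Gamma(t))$, which has the same boundary. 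Rewriting
$$\int_\sigma \omega = \int_0^1 \int_{\phi_H^s \circ \Gamma} \iota_{X_s}\omega\, ds$$
and using the exactness $\iota_{X_s}\omega = dH_s$ on each closed loop $\phi_H^s \circ \Gamma$ yields $\int_\sigma \omega = 0$. The difference $H - \sigma$ is then a $2$-cycle in the closed orientable surface $\Sigma$, hence a multiple $n \cdot [\Sigma]$ in $H_2(\Sigma;\mathbb{Z}) \cong \mathbb{Z}$, so $\sum_p \mathcal{A}_{i_p,i_{p+1}}(\phi) = n\,\omega(\Sigma)$ for some $n \in \mathbb{Z}$. The delicate last step is that $n$ could a priori be nonzero: here (iii) enters decisively, giving $\left|\sum_p \mathcal{A}_{i_p,i_{p+1}}(\phi)\right| \leq m C \Vert\phi\Vert_{C^0}$. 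Shrinking the $C^0$-neighborhood of the identity so that $m C \Vert\phi\Vert_{C^0} < \omega(\Sigma)$ forces $|n| < 1$, hence $n = 0$. This bootstrapping between the integer ambiguity from $H_2(\Sigma;\mathbb{Z})$ and the $C^0$-smallness hypothesis is, in my view, the crux of the proof.
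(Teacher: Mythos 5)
Your proposal is correct and follows essentially the same route as the paper's proof: reversal of parameterization for (ii), the tubular-neighborhood area estimate for (iii), and, for (iv), gluing the Hamiltonian-isotopy cylinder (whose $\omega$-area vanishes by $\widetilde{\Flux}=0$) with the concatenated homotopy $2$-chain into a closed $2$-cycle, observing that its $\omega$-integral lies in $\omega(\Sigma)\,\mathbb{Z}$, and using (iii) together with the $C^0$-smallness hypothesis to force the integer to be zero. Your write-up is a bit more explicit than the paper's for (i) and (iii) (the paper defers (i) to the analogous statement for $\mathcal{O}$ and simply asserts (iii)), but the strategy and the crucial bootstrapping step in (iv) coincide.
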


\begin{proof}

We are going to prove the claims in the order they appear.

\begin{itemize}
    \item[(i)] Since in Definition \ref{def: def of A}, the homotopy $h$ is inside the disk $U^1_j$ and since the disk is contractible the space of homotopies $h$ is simply connected and thus the choice of homotopy does not change the value of the integral.
    
    \item[(ii)] If~$h_{s,t}$ is an isotopy from~$\gamma$ to~$\phi(\gamma(t))$, then~$h_{s,1-t}$ is an isotopy from~$\gamma(1-t)$ to~$\phi(\gamma(1-t))$, so after a change of variable in the integral we have the identity:
    $$\mathcal{A}_{i_1,i_2}(\phi)=\int_0^1\int_0^1 \omega(\partial_s h_{s,t},\partial_t h_{s,t})=-\mathcal{A}_{i_2,i_1}(\phi).$$
    
    \item[(iii)] We define the tubular neighborhood $N$ of $\Delta^1_j$ with width $2\Vert \phi \Vert_{C^0}$, since $\phi$ is the identity on $V^0_{i_1}$ and on $V^0_{i_2}$ the image $\phi(\Delta^1_j)$ does not cross the end of the edge $\Delta^1_j$. This implies that the area below $\phi(\Delta^1_j)$ in $N$ is smaller than $2\Vert \phi \Vert_{C^0}\ell$, where $\ell$ denotes the length of $\Delta^1_j$. Hence,
    \[\vert \mathcal{A}_{i_1,i_2}(\phi)\vert\leq C \Vert \phi \Vert_{C^0}.\]
    for some constant $C > 0$.
    
    \item[(iv)] Let~$\gamma$ be the piece-wise smooth path going through \smash[t]{$\Delta_{j_1}^{1},\Delta_{j_2}^1,\ldots, \Delta_{j_m}^1$}, where \smash[t]{$\Delta_{j_p}^1$} links \smash[t]{$\Delta_{i_p}^0$ to~$\Delta_{i_{p+1}}^0$}, the path should not go twice through the same vertex. Let~$\phi_t$ be a Hamiltonian isotopy from~$Id$ to~$\phi$, then \smash[t]{$\widetilde{\text{Flux}}(\phi_t)=0$}, this means that the area of the cylinder~$\phi_t(\gamma)$ is zero. However, nothing tells us a priori that the area of the cylinder of~$\phi_t(\gamma)$ is the same as the sum~$\sum_{p=1}^m \mathcal{A}_{i_p,i_{p+1}}(\phi)$, obtained also as the area of some cylinder between~$\gamma$ and~$\phi\circ \gamma$ but with support in \smash[t]{$\bigcup U_{j_p}^1$}. However, we can glue those two cylinders to obtain one closed 2-cycle~$\sigma_2$ so
    $$\int_{\sigma_2}\omega=\widetilde{\text{Flux}}(\phi_t)(\gamma)+\sum_{p=1}^m \mathcal{A}_{i_p,i_{p+1}}(\phi)=\sum_{p=1}^m \mathcal{A}_{i_p,i_{p+1}}(\phi)$$ 
    has value in~$\omega \cdot H_2(\Sigma,\mathbb{Z})= <\omega, [\Sigma]>\mathbb{Z} \subset \mathbb{R}$. If~$\phi$ is~$C^0$-small enough then by the third point of the proposition,~$\vert\sum_{p=1}^m \mathcal{A}_{i_p,i_{p+1}}(\phi)\vert\leq m C \Vert \phi \Vert_{C^0}\leq \vert I_k\vert \Vert \phi \Vert_{C^0}$ is inside this subgroup so must be 0 for~$\Vert \phi \Vert_{C^0}$ small enough. This finishes the proof of Proposition \ref{prop: properties of A}.
\end{itemize}
\end{proof}


\subsection{Proof of Theorems \ref{thm: fragmentation lemma on surfaces with Lipschitz estimate} and \ref{thm: fragmentation lemma for Homeo}}\label{subsec: pf of the thm}

We have now all the tools to prove Theorem \ref{thm: fragmentation lemma on surfaces with Lipschitz estimate} and Theorem \ref{thm: fragmentation lemma for Homeo}.

\begin{proof}[Proof of Theorem \ref{thm: fragmentation lemma on surfaces with Lipschitz estimate}]

Let~$T$ be a triangulation such that the star of every vertices of~$T$ are included in one of the open sets of the subcovering of~$\mathcal{W}$, we will say that such triangulation is \textit{good}. We will consider the open coverings~$\mathcal{V},\, \mathcal{V}'$ and~$\mathcal{U}$ associated to~$T$. Then the three coverings~$\mathcal{V}$,~$\mathcal{V}'$ and~$\mathcal{U}$ are finer than~$\mathcal{W}$. We will prove the fragmentation theorem on~$\mathcal{U}$ which will imply it for~$\mathcal{W}$ by a simple argument described at the end of the proof. We recall that for all~$k=0,1,2$ and~$i\in I_k$, \smash[t]{$V_i^k\subset V'^k_i\subset U_i^k$}.

Let us describe briefly an outline of the proof. In order to fragment a diffeomorphism~$\phi$ we will proceed in 3 steps.

In Lemma \ref{lem: fragmentation on the 0-skeleton} and Lemma \ref{sublemma: naive fragmentation on the 0-skeleton} we will start the fragmentation by finding Hamiltonian diffeomorphisms compactly supported in~$(U_i^{0})$, agreeing with~$\phi$ on~$V_i^0$ and some additional condition that will be needed in order to prove Lemma \ref{lem: fragmentation on the 1-skeleton}. We will then define~$\phi'$ as the Hamiltonian diffeomorphism~$\phi$ were we pre-composed by the inverses of the diffeomorphisms we just constructed. Thus,~$\phi'$ is a Hamiltonian diffeomorphism which is the identity on the sets~$(U_i^{0})$. 

In Lemma \ref{lem: fragmentation on the 1-skeleton} we find Hamiltonian diffeomorphisms compactly supported in~$U_i^1$ and agreeing with~$\phi'$ on~$V_i^1$. With the same construction as above we define~$\phi''$.

In Lemma \ref{lem: fragmentation on the 2-skeleton} we will finish the fragmentation without any difficulty since~$\phi''$ is actually naturally fragmented.

\begin{lemma}[Fragmentation on the 0-skeleton]\label{lem: fragmentation on the 0-skeleton}
Let~$T=(\Delta_i^k)_{i\in I_k}$ be a good triangulation~$\mathcal{U}$,~$\mathcal{V}$ and~$\mathcal{V}'$ open coverings associated with~$T$ as described in Section \ref{subsec: def of covering}. Let~$\phi\in \Ham(\Sigma)$ be a~$C^0$-small diffeomorphism. Then we can find the following~$C^0$-fragmentation:
$$\phi=\phi_1^{(0)} \circ \phi_2^{(0)} \cdots \circ 
\phi_{\ell}^{(0)}\circ \phi',$$
where~$\ell:=\vert I_0 \vert$, for all~$i \in I_0$,~$\phi_i^{(0)}\in\Ham(U_i^0)$, verifies~$\phi_i^{(0)}\vert_{V^0_i}=\phi$ and satisfies the estimate 
$$\Vert \phi_p^{(0)} \Vert_{C^0}\leq C \Vert \phi \Vert_{C^0},$$
where~$C>0$ is a constant.

Moreover,~$\mathcal{A}_{i,j}(\phi')=0$ for all~$i,j \in I_0$ linked by an edge in~$T$.
\end{lemma}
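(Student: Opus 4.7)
The plan is to produce the fragments in two stages: a naive fragmentation obtained by applying the extension lemma for disks (Corollary \ref{coro: area-preserving extension lemma for disks}) vertex by vertex, followed by a small correction of each piece, supported in an annular collar of $\partial V_i^0$, designed to kill the obstructions $\mathcal{A}_{i,j}$. For the naive step, apply Corollary \ref{coro: area-preserving extension lemma for disks} to the triple of nested disks $V_i^0 \subset V'^0_i \subset U_i^0$ to obtain, for each $i\in I_0$, a diffeomorphism $\tilde\phi_i^{(0)} \in \Ham(U_i^0)$ with $\tilde\phi_i^{(0)}\vert_{V_i^0} = \phi\vert_{V_i^0}$ and $\Vert \tilde\phi_i^{(0)}\Vert_{C^0} \leq C\Vert\phi\Vert_{C^0}$, provided $\phi$ is $C^0$-small. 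Since the $U_i^0$ are pairwise disjoint the $\tilde\phi_i^{(0)}$ commute, and $\tilde\phi' := (\tilde\phi_1^{(0)})^{-1}\circ \cdots \circ (\tilde\phi_\ell^{(0)})^{-1}\circ \phi$ is Hamiltonian on $\Sigma$ and equal to the identity on every $V_i^0$, so $\mathcal{A}_{i,j}(\tilde\phi')$ is well defined for each edge of $T$.

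By Proposition \ref{prop: properties of A}(iv) applied to the Hamiltonian $\tilde\phi'$, the antisymmetric 1-cochain $(i,j) \mapsto \mathcal{A}_{i,j}(\tilde\phi')$ on the 1-skeleton of $T$ sums to zero along every oriented cycle. A standard graph-cohomology argument (fix a basepoint in the connected 1-skeleton and integrate along any path) therefore produces real numbers $\{b_i\}_{i\in I_0}$ such that $\mathcal{A}_{i,j}(\tilde\phi') = b_j - b_i$ for every edge. By property (iii) of the same proposition, together with the fact that the combinatorial diameter of the 1-skeleton of $T$ is bounded (by a constant depending only on $T$), we obtain $|b_i| \leq C_1 \Vert\phi\Vert_{C^0}$.

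For each vertex $i$, fix once and for all an annular collar $A_i \subset U_i^0 \setminus \overline{V_i^0}$ of $\partial V_i^0$. Identifying $A_i$ with $\mathbb{A}_1$, Lemma \ref{lem: surjectivity of O} furnishes $\psi_i \in \Sympc(A_i)$ with prescribed obstruction $\mathcal{O}(\psi_i) = -b_i$ (the sign being chosen to match Definition \ref{def: definition of O}) and $\Vert\psi_i\Vert_{C^0} \leq C_2 |b_i| \leq C_3 \Vert\phi\Vert_{C^0}$. Extended by the identity, $\psi_i$ is a compactly supported symplectomorphism of the disk $U_i^0$ and is therefore Hamiltonian (as $H^1$ of a disk vanishes). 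Define $\phi_i^{(0)} := \tilde\phi_i^{(0)} \circ \psi_i \in \Ham(U_i^0)$; it still agrees with $\phi$ on $V_i^0$ (because $\psi_i$ is the identity there) and satisfies $\Vert\phi_i^{(0)}\Vert_{C^0} \leq C'\Vert\phi\Vert_{C^0}$.

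Finally, set $\phi' := (\phi_1^{(0)})^{-1}\circ\cdots\circ(\phi_\ell^{(0)})^{-1}\circ \phi$, an order-irrelevant product by disjointness of the supports; equivalently $\phi' = \psi_\ell^{-1}\circ\cdots\circ\psi_1^{-1}\circ\tilde\phi'$. By additivity of $\mathcal{A}_{i,j}$ under composition and the fact that each $\psi_k^{-1}$ with $k \notin \{i_1,i_2\}$ acts trivially near the edge between $i_1$ and $i_2$, the obstruction $\mathcal{A}_{i_1,i_2}(\phi')$ reduces to the sum of three contributions: from $\tilde\phi'$, $\psi_{i_1}^{-1}$ and $\psi_{i_2}^{-1}$, equal to $b_{i_2} - b_{i_1}$, $+b_{i_1}$ and $-b_{i_2}$ respectively, summing to zero. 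The main subtlety of the argument lies in the sign bookkeeping of this last step: one must verify carefully that the obstruction $\mathcal{O}(\psi_i)$ supported in the collar $A_i$ contributes to $\mathcal{A}_{i_1,i_2}$ with opposite signs at the two endpoints of each incident edge, which is precisely what allows the vertex scheme $\{b_i\}$ to cancel the edge obstructions.
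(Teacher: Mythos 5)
Your proposal is correct and uses essentially the same approach as the paper: a naive fragmentation by the area-preserving extension lemma for disks (Corollary~\ref{coro: area-preserving extension lemma for disks}), followed by annular corrections $\psi_i$ supported near $\partial V_i^0$ whose obstruction $\mathcal{O}(\psi_i)$ is prescribed, using Lemma~\ref{lem: surjectivity of O}, to be the potential of the exact $1$-cocycle $(i,j)\mapsto\mathcal{A}_{i,j}(\tilde\phi')$ provided by Proposition~\ref{prop: properties of A}(iv). The only cosmetic difference is that you compose $\psi_i$ on the right of $\tilde\phi_i^{(0)}$ rather than on the left, which does not change the argument.
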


The first step in order to prove Lemma \ref{lem: fragmentation on the 0-skeleton} is to prove Lemma \ref{sublemma: naive fragmentation on the 0-skeleton}, it is a fragmentation on the open sets on the vertices but we do not ask for the obstruction~$\mathcal{A}$ to be 0.

\begin{lemma}\label{sublemma: naive fragmentation on the 0-skeleton}
Let~$T=(\Delta_i^k)_{i\in I_k}$ be a good triangulation~$\mathcal{U}$,~$\mathcal{V}$ and~$\mathcal{V}'$ open coverings associated with T as described in Section \ref{subsec: def of covering}. Let~$\phi\in \Ham(\Sigma)$ a~$C^0$-small diffeomorphism, then we can find the following~$C^0$-fragmentation:
$$\phi=\check{\phi}_1 \circ \check\phi_2 \cdots \circ 
\check\phi_{\ell}\circ \widetilde{\phi},$$
where~$\ell=\vert I_0 \vert$, for all~$i \in I_0$,~$\check\phi_i\in \Ham(U_i^0)$, verifies~$\check\phi_i\vert_{V^0_i}=\phi$ and satisfies the estimate 
$$\Vert \check \phi_i \Vert_{C^0}\leq C \Vert \phi \Vert_{C^0},$$
where~$C>0$ is a constant depending on the choice of~$T$ and the open coverings associated to it. The Hamiltonian diffeomorphism \smash[t]{$\widetilde{\phi}$} is then supported in~$\Sigma \backslash V^0$ and its~$C^0$-norm satisfy a Lipschitz bound in the norm~$\Vert\phi \Vert_{C^0}$.
\end{lemma}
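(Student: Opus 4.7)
The plan is to apply the area-preserving extension lemma for disks (Corollary \ref{coro: area-preserving extension lemma for disks}) independently at each vertex of the triangulation, and use the pairwise disjointness of the open sets $U_i^0$ to glue the resulting diffeomorphisms without interaction. Concretely, for each $i \in I_0$, we view the nested triple $V_i^0 \subset V_i'^0 \subset U_i^0$ as the triple of disks required by the corollary. Provided $\|\phi\|_{C^0}$ is small enough that $\phi(\overline{V_i'^0}) \subset U_i^0$ for every $i$ (which follows from the compactness of $\Sigma$ and the fact that $\overline{V_i'^0} \subset U_i^0$), the restriction of $\phi$ to a neighborhood of $V_i'^0$ falls into the hypothesis of Corollary \ref{coro: area-preserving extension lemma for disks}. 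The corollary then furnishes $\phi_i^{(-1)} \in \mathrm{Ham}_c(U_i^0)$ such that $\phi_i^{(-1)}|_{V_i^0} = \phi|_{V_i^0}$ and $\|\phi_i^{(-1)}\|_{C^0} \leq C\|\phi\|_{C^0}$ for a uniform constant $C > 0$ depending only on the fixed triangulation and associated coverings.

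Next I would define
$$\widetilde{\phi} := (\phi_1^{(-1)})^{-1} \circ (\phi_2^{(-1)})^{-1} \circ \cdots \circ (\phi_\ell^{(-1)})^{-1} \circ \phi,$$
so that by construction $\phi = \phi_1^{(-1)} \circ \cdots \circ \phi_\ell^{(-1)} \circ \widetilde{\phi}$. Since the sets $U_i^0$ are pairwise disjoint, the fragments $\phi_i^{(-1)}$ have disjoint supports and in particular pairwise commute; this makes the ordering in the composition immaterial for the analysis that follows. Moreover, each $\phi_i^{(-1)}$ is Hamiltonian and $\phi$ is Hamiltonian, hence $\widetilde{\phi} \in \mathrm{Ham}(\Sigma)$.

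It remains to check the two claimed properties of $\widetilde{\phi}$. For the support: fix $x \in V_i^0$. Since $\phi_i^{(-1)}|_{V_i^0} = \phi|_{V_i^0}$, we have $\phi(x) = \phi_i^{(-1)}(x) \in U_i^0$. Because the supports of the $\phi_j^{(-1)}$ for $j \neq i$ lie in the $U_j^0$, which are disjoint from $U_i^0$, applying $(\phi_j^{(-1)})^{-1}$ for $j \neq i$ leaves both $x$ and $\phi(x)$ fixed, so the composition telescopes to $\widetilde{\phi}(x) = (\phi_i^{(-1)})^{-1}(\phi(x)) = x$. Hence $\widetilde{\phi}$ is the identity on $V^0 = \bigcup_i V_i^0$, i.e., supported in $\Sigma \setminus V^0$. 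For the $C^0$-estimate, the triangle inequality together with the bound on each fragment gives
$$\|\widetilde{\phi}\|_{C^0} \leq \|\phi\|_{C^0} + \sum_{i=1}^\ell \|\phi_i^{(-1)}\|_{C^0} \leq (1 + \ell C)\|\phi\|_{C^0},$$
which is the required Lipschitz bound.

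There is essentially no obstacle here beyond correctly invoking Corollary \ref{coro: area-preserving extension lemma for disks}: the disjointness of the $U_i^0$ does all the work of decoupling the local extensions, and the only smallness we ever need on $\|\phi\|_{C^0}$ is (i) that $\phi(\overline{V_i'^0}) \subset U_i^0$ for all $i$ and (ii) the smallness demanded by each application of Corollary \ref{coro: area-preserving extension lemma for disks}. Taking the minimum of these finitely many thresholds defines the $C^0$-neighborhood of the identity on which the lemma holds.
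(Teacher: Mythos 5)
Your proof is correct and follows essentially the same route as the paper: apply the area-preserving extension lemma for disks (Corollary~\ref{coro: area-preserving extension lemma for disks}) at each vertex using the nested triple $V_i^0 \subset V_i'^0 \subset U_i^0$, exploit the pairwise disjointness of the $U_i^0$ to compose the fragments without interference, set $\widetilde{\phi}$ to be the remainder, and verify the support and the additive $C^0$-estimate. You simply spell out in more detail the telescoping argument that $\widetilde{\phi}$ is the identity on $V^0$, which the paper leaves to the reader.
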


\begin{proof}
Let~$i \in I_0$, by Corollary \ref{coro: area-preserving extension lemma for disks} and since~$\phi$ is a~$C^0$-small diffeomorphism and an area-preserving embedding of~$V'^0_i$ in~$U_i^{0}$, there exists~$\check\phi_i\in \Ham(U_i^0)$ such that~$\check\phi_i\vert_{V_i}=\phi$. Moreover, there exists a constant~$D>0$ such that~$\Vert \check\phi_i \Vert_{C^0}\leq D \Vert \phi \Vert_{C^0}$. Then there exists a diffeomorphism~$\widetilde{\phi}$ such that
$$\phi=\bigcirc_{i\in I_k}\phi_i \circ \widetilde{\phi}$$ 
(there is no issue with the composition since the supports of the~$\phi_i$ are disjoint). We then have that~$\widetilde{\phi}$ is supported in~$\Sigma\backslash V^0$ and is a Hamiltonian diffeomorphism of~$\Sigma$. Also,~$\Vert \widetilde{\phi} \Vert_{C^0} \leq \Vert\phi\Vert_{C^0}+\sum \Vert \check \phi_i\Vert_{C^0}\leq C \Vert \phi\Vert_{C^0}$, for~$C>0$ a constant.
\end{proof}

It is time now to prove Lemma \ref{lem: fragmentation on the 0-skeleton}.

\begin{proof}[Proof of Lemma \ref{lem: fragmentation on the 0-skeleton}]

We apply first Lemma \ref{sublemma: naive fragmentation on the 0-skeleton} and we now want to do slight modifications on the diffeomorphisms~$\check \phi_p$ in order to vanish the obstruction~$\mathcal{A}$ on each edge.

In order to do this, we first take a fixed index~$i\in I_0$ and its corresponding vertex~$\Delta_i^0$. We will define for a vertex~$j\in I_0$ the real number~$C(j)$ by~$$C(j):=\sum_{p=0}^{m-1}\mathcal{A}_{i_p,i_{p+1}}(\widetilde\phi),$$
where~$i_0=i,i_1,\ldots,i_m=j$ is a sequence of indices such that they are all linked by an edge in~$T$. Using property (iv) of Proposition \ref{prop: properties of A} we see that this value does not depend on the sequence~$(i_p)$ we take. We define now~$\phi_j^{(0)}=\rho_{j}\circ \check \phi_j$ where~$\rho_j$ is a compactly supported diffeomorphism from the annulus~$U_j^0\backslash V_j^0$ to itself, and such that~$\mathcal{O}(\rho_j)=C(j)$ (we identify~$\partial V_j^0$ with~$S^1 \times \{-1\}$ to match Definition \ref{def: definition of O}), note that Lemma \ref{lem: surjectivity of O} allows us to take~$\rho_j$ with a Lipschitz estimate. Then the diffeomorphisms~$\phi_j^{(0)}$ are the fragments needed in Lemma \ref{lem: fragmentation on the 0-skeleton}, and~$$\phi'=\bigcirc_{j} (\rho_j)^{-1}\widetilde{\phi}$$ satisfy all the conditions of the conclusion of Lemma \ref{lem: fragmentation on the 0-skeleton}.

\end{proof}

We now work on~$\phi'$ and fragment it in Lemma \ref{lem: fragmentation on the 1-skeleton}.

\begin{lemma}[Fragmentation on the 1-skeleton]\label{lem: fragmentation on the 1-skeleton}
Let~$\phi'$ be the resulting Hamiltonian diffeomorphism after applying Lemma \ref{lem: fragmentation on the 0-skeleton}, then there exists a fragmentation of~$\phi'$,
$$\phi'=\phi_1^{(1)}\circ \phi_2^{(1)}\cdots \circ \phi_m^{(1)}\circ \phi'',$$
where~$m=\vert I_1 \vert$, for all~$i \in I_1$,~$\phi_i^{(1)} \in \Ham(U_i^1)$,~$\phi_i^{(1)}\vert_{V_i^1}=\phi'$ and the following estimate is true
$$\Vert \phi_i^{(1)} \Vert_{C^0}\leq C \Vert \phi' \Vert_{C^0},$$
where~$C>0$ is a constant (that depends on~$T$). The resulting~$\phi''$ is then supported in~$\Sigma \backslash (V^0 \cup V^1)$ and satisfy a Lipschitz estimate with respect to~$\Vert\phi' \Vert_{C^0}$ and thus with respect to~$\Vert\phi \Vert_{C^0}$.
\end{lemma}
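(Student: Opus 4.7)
The plan is to fragment~$\phi'$ one edge at a time by invoking Corollary \ref{coro: area-preserving extension lemma for rectangles} inside a rectangular model of each~$U_j^1$. The point is that the two extra properties extracted from Lemma \ref{lem: fragmentation on the 0-skeleton} --- namely~$\phi'|_{V^0}=Id$ and~$\mathcal{A}_{i_1,i_2}(\phi')=0$ for every edge~$\Delta_j^1$ with endpoints~$\Delta_{i_1}^0,\Delta_{i_2}^0$ --- correspond exactly to the two preparatory assumptions of the rectangular extension lemma: being the identity near the ``short'' sides of a rectangle, and having zero signed area between a horizontal slice and its image.

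Fix~$j\in I_1$ with endpoints~$i_1,i_2\in I_0$. Using Moser's trick I would first build an area-preserving chart~$\kappa_j\colon U_j^1\to \Pi_3=[0,R_j]\times[-c_3,c_3]$ that sends~$\Delta_j^1$ onto~$[0,R_j]\times\{0\}$, carries the nested triple~$V_j^1\subset V_j'^1\subset U_j^1$ onto nested rectangles~$\Pi_1=[0,R_j]\times[-c_1,c_1]\subset \Pi_2=[0,R_j]\times[-c_2,c_2]\subset \Pi_3$, and whose short sides~$\{0\}\times[-c_3,c_3]$ and~$\{R_j\}\times[-c_3,c_3]$ lie inside~$V_{i_1}^0\cup V_{i_2}^0$. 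Read in this chart, the map~$\kappa_j\circ \phi'\circ \kappa_j^{-1}$ is an area-preserving embedding of~$\Pi_2$ into~$\Pi_3$, equal to the identity in a neighborhood of the short sides (because~$\phi'|_{V^0}=Id$). The signed area in~$\Pi_3$ bounded by~$[0,R_j]\times\{0\}$ and its image is, by unraveling Definition \ref{def: def of A}, precisely~$\mathcal{A}_{i_1,i_2}(\phi')$, which vanishes by hypothesis. Corollary \ref{coro: area-preserving extension lemma for rectangles} then yields a Hamiltonian~$\widetilde{\phi}_j\in \Ham(\Pi_3)$, equal to~$\kappa_j\circ\phi'\circ\kappa_j^{-1}$ on~$\Pi_1$, with~$\Vert\widetilde{\phi}_j\Vert_{C^0}\leq C\Vert\phi'\Vert_{C^0}$. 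Setting~$\phi_j^{(1)}:=\kappa_j^{-1}\circ \widetilde{\phi}_j\circ \kappa_j$ and extending by the identity (allowed by the ``moreover'' clause of the corollary), I obtain~$\phi_j^{(1)}\in \Ham_c(U_j^1)$ with~$\phi_j^{(1)}|_{V_j^1}=\phi'|_{V_j^1}$ and a Lipschitz bound on~$\Vert\phi_j^{(1)}\Vert_{C^0}$.

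Because the open sets~$(U_j^1)_{j\in I_1}$ are pairwise disjoint by the construction of Section \ref{subsec: def of covering}, the fragments~$\phi_j^{(1)}$ have pairwise disjoint supports and hence commute, so the order in the composition is irrelevant. Setting~$\phi'':=(\phi_1^{(1)}\circ\cdots\circ\phi_m^{(1)})^{-1}\circ \phi'$, the identity~$\phi_j^{(1)}|_{V_j^1}=\phi'|_{V_j^1}$ together with the disjointness of supports gives~$\phi''|_{V_j^1}=Id$ for every~$j$, while the fact that each~$\phi_j^{(1)}$ is supported away from~$V^0$ gives~$\phi''|_{V^0}=\phi'|_{V^0}=Id$. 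Consequently~$\phi''$ is supported in~$\Sigma\setminus (V^0\cup V^1)$, and the triangle inequality yields~$\Vert\phi''\Vert_{C^0}\leq (1+mC)\Vert\phi'\Vert_{C^0}$, which together with the estimate of Lemma \ref{lem: fragmentation on the 0-skeleton} gives the required Lipschitz control in terms of~$\Vert\phi\Vert_{C^0}$.

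The main obstacle will be the chart-building step: one needs a single area-preserving diffeomorphism of~$U_j^1$ onto a rectangle that simultaneously respects the three nested opens~$V_j^1\subset V_j'^1\subset U_j^1$ and places the short sides inside the vertex neighborhoods~$V_{i_1}^0\cup V_{i_2}^0$, so that the two hypotheses of Corollary \ref{coro: area-preserving extension lemma for rectangles} become automatic from~$\phi'|_{V^0}=Id$ and~$\mathcal{A}_{i_1,i_2}(\phi')=0$. This is a local Moser-type construction, but it is where one has to check that the thickenings chosen in Section \ref{subsec: def of covering} are flexible enough to admit such a rectangular model; in particular, this is the step that may force a slight refinement of the triangulation~$T$.
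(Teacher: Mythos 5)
Your argument is essentially the same as the paper's: both invoke Corollary~\ref{coro: area-preserving extension lemma for rectangles} edge by edge, using that~$\phi'|_{V^0}=Id$ gives the identity-near-the-short-sides hypothesis and~$\mathcal{A}_{i_1,i_2}(\phi')=0$ gives the zero-signed-area hypothesis, and then assemble~$\phi''$ from the pairwise disjointness of the~$U_j^1$'s. The only difference is that you make the identification of~$U_j^1$ with a rectangular model (and the translation of~$\mathcal{A}$ into the signed-area condition of the corollary) explicit, whereas the paper leaves this implicit; your worry at the end about whether such charts exist is already absorbed into the Thurston--Banyaga construction of Section~\ref{subsec: def of covering}, where each~$V_j^1\subset V_j'^1\subset U_j^1$ is a nested family of tubular neighborhoods of the edge with ends inside the vertex sets~$V_{i_1}^0,V_{i_2}^0$.
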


\begin{proof}
Let~$i\in I_1$, and~$i_1$ and~$i_2$ are the vertices of the edge~$\Delta_i^1$. Then, since~$\phi'$ is~$C^0$-small~$\phi'$ is an area-preserving embedding of~$V'^1_i$ in~$U_i^{1}$ being equal to the identity on~$V_i^1\cap V_{i_1}^0$ and~$V_i^1 \cap V_{i_2}^0$, also the condition~$\mathcal{A}_{i_1,i_2}(\phi')=
0$ is exactly the condition we need to apply Corollary \ref{coro: area-preserving extension lemma for rectangles}. We have now \smash[t]{$\phi_i^{(1)} \in \Ham(U_i^1)$} such that \smash[t]{$\phi_i^{(1)}\vert_{V_i^1}=\phi'$ and~$\Vert\phi_i^{(1)}\Vert_{C^0}\leq D \Vert \phi' \Vert_{C^0}$.} So there exists a diffeomorphism~$\phi''$ such that \smash[t]{$\phi'=\bigcirc_{i\in I_1} \phi_i^{(1)}\circ \phi''$}. Then~$\phi''$ is a Hamiltonian diffeomorphism of~$\Sigma$ itself, it is compactly supported in~$\Sigma \backslash (V_0 \cup V_1)$ and satisfy~$\Vert \phi'' \Vert_{C^0}\leq \Vert \phi' \Vert_{C^0}+\sum_{i\in I_1} \Vert\phi_i^{(1)}\Vert_{C^0}\leq C \Vert\phi \Vert_{C^0}$.
\end{proof}

\begin{lemma}[Fragmentation on the 2-skeleton]\label{lem: fragmentation on the 2-skeleton}
Let~$\phi''$ be the resulting Hamiltonian diffeomorphism from Lemma \ref{lem: fragmentation on the 1-skeleton}, we can fragment it
$$\phi''=\phi_1^{(2)}\circ \phi_2^{(2)}\cdots \circ \phi_n^{(2)},$$
where~$n=\vert I_2 \vert$, for all~$i \in I_2$,~$\phi_i^{(2)}\in \Ham(U_i^2)$ and~$$\Vert \phi_i^{(2)} \Vert_{C^0}\leq C \Vert \phi'' \Vert_{C^0},$$
where~$C>0$ is a constant.

\end{lemma}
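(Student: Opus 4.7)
The plan is to exploit that, by the inductive construction of Section \ref{subsec: def of covering}, the support of $\phi''$ already sits naturally inside a disjoint union of the disks $V_i^2$. Unlike the previous two steps, no extension lemma will be needed here; the fragmentation will be obtained by simple restriction.

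First I would verify that $(V_i^k)_{k,i}$ is in fact an open covering of $\Sigma$: the inductive construction guarantees that $V_i^k$ contains $\widetilde{\Delta}_i^k$, so every point in a $k$-simplex that was not already absorbed by some $V^\ell$ with $\ell<k$ is captured at stage $k$. Hence $\Sigma = V^0\cup V^1\cup V^2$, and since $\phi''$ is supported in $\Sigma\setminus(V^0\cup V^1)$ we get $\mathrm{supp}(\phi'') \subset V^2 = \bigsqcup_{i\in I_2} V_i^2$, with the union disjoint by the construction of $\mathcal{V}$.

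Next I would define the fragments by $\phi_i^{(2)}:=\phi''$ on $V_i^2$ and $\phi_i^{(2)}:=Id$ on $\Sigma\setminus V_i^2$. Smoothness at $\partial V_i^2$ follows from the observation that $\partial V_i^2\subset \Sigma\setminus V^2$, because $\partial V_i^2$ is disjoint both from $V_i^2$ (open) and from the other $V_j^2$'s (pairwise disjoint); in particular $\partial V_i^2$ lies outside $\mathrm{supp}(\phi'')$, so $\phi''$ is the identity on a neighborhood of $\partial V_i^2$ in $V_i^2$ and the piecewise definition glues smoothly. Since the closures $\overline{V_i^2}\subset U_i^2$ are pairwise disjoint, the fragments commute and one checks directly that $\phi'' = \phi_1^{(2)}\circ \phi_2^{(2)}\circ\cdots\circ \phi_n^{(2)}$ in any order.

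To conclude $\phi_i^{(2)}\in \Ham(U_i^2)$, I would use that each $U_i^2$ is a disk: $H^1_c(U_i^2,\mathbb{R})=0$, so every symplectic isotopy with compact support in $U_i^2$ has vanishing flux and is therefore Hamiltonian, and $\Sympc(D^2)$ is classically connected, so $\phi_i^{(2)}\in \Ham_c(U_i^2)$. The $C^0$-estimate is immediate: for $x\in V_i^2$ we have $d(x,\phi_i^{(2)}(x))=d(x,\phi''(x))\leq \Vert\phi''\Vert_{C^0}$, and the fragment fixes the complement, so $\Vert\phi_i^{(2)}\Vert_{C^0}\leq \Vert\phi''\Vert_{C^0}$, which in turn is Lipschitz in $\Vert\phi\Vert_{C^0}$ by Lemma \ref{lem: fragmentation on the 1-skeleton}. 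I do not expect a serious obstacle in this step; all the delicate analytic work has already been absorbed by the extension lemmas used in Lemmas \ref{lem: fragmentation on the 0-skeleton} and \ref{lem: fragmentation on the 1-skeleton}, leaving only a combinatorial restriction argument.
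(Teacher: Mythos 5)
Your proposal is correct and follows essentially the same route as the paper: observe that $\phi''$ is supported in $\Sigma\setminus(V^0\cup V^1)$, which is contained in the disjoint union of the $V_i^2$ (or $U_i^2$), restrict to get the fragments, and note that a compactly supported symplectomorphism of a disk is Hamiltonian so the $C^0$-bound is immediate. The paper states this in two lines; you spell out the same argument with the covering check, the gluing at $\partial V_i^2$, and the vanishing-flux justification made explicit.
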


\begin{proof}
$\phi''$ has now support in~$\Sigma \backslash (V^0 \cup V^1)\subset \bigcup U_i^{2}$. So it can be decomposed in Hamiltonian diffeomorphism with support in the~$U_i^2$ (pairwise disjoint) and the bound is also immediate.
\end{proof}

Combining Lemmas \ref{lem: fragmentation on the 0-skeleton}, \ref{lem: fragmentation on the 1-skeleton} and \ref{lem: fragmentation on the 2-skeleton} we obtain a fragmentation with a finitely bounded number of fragments. We describe now the procedure to give the result in the shape of Theorem \ref{thm: fragmentation lemma on surfaces with Lipschitz estimate}.

We want to swap the support of the diffeomorphisms, let~$\psi=fg$ for two diffeomorphisms~$f$ and~$g$ such that~$\text{supp}(f)\subset B_1'\subset B_1$ and~$\text{supp}(g)\subset B_2$. Then~$\psi=g(g^{-1}fg)$ and we want to show now that~$g^{-1}fg$ is supported inside~$B_1''$ the set of points whose distance to~$B_1'$ is smaller than~$\Vert g \Vert_{C^0}$ if~$f$ and~$g$ have a small~$C^0$-norm. Indeed, if~$d(x,B_1')\geq \Vert g\Vert_{C^0}$ then~$g^{-1}(f(g(x)))=g^{-1}(g(x))=x$. This procedure is the kind of commutation we needed
$$\psi=\underbrace{f}_{\text{support in } B_1}\underbrace{g}_{\text{support in } B_2}=\underbrace{g}_{\text{support in } B_2}\underbrace{g^{-1}fg}_{\text{support in } B_1}.$$

We need to show that we can apply this procedure repeatedly. Note that one needs to apply it only a finite number of time since there are at most~$N:=\ell+m+n$ fragments in the fragmentation we have for now and one can obtain any permutation with~$N$ elements with at most~$N!$ transpositions. If we have~$\Vert \phi \Vert_{C^0}$ small enough, then the successive thickenings of~$B_1'$ in the operation described above will stay inside~$B_1$. We thus obtain a fragmentation~$\phi=\phi_1 \phi_2\cdots \phi_m$ with~$\phi_i\in \Ham(W_i)$ and~$\Vert\phi_i\Vert_{C^0} \leq C \Vert\phi\Vert_{C^0}$ for some constant~$C>0$.

\end{proof}

\begin{proof}[Proof of Theorem \ref{thm: fragmentation lemma for Homeo}]
The proof transposes for~$\text{Ker}(\theta)$ by adapting directly Corollaries \ref{coro: area-preserving extension lemma for disks} and \ref{coro: area-preserving extension lemma for rectangles}, then the obstruction~$\mathcal{O}$ and~$\mathcal{A}$ and finally Lemma \ref{lem: fragmentation on the 0-skeleton}, \ref{sublemma: naive fragmentation on the 0-skeleton}, \ref{lem: fragmentation on the 1-skeleton} and \ref{lem: fragmentation on the 2-skeleton}.
\end{proof}


\section{Proof of the area-preserving extension lemma}\label{sec: pf of the area-preserving extension lemma}


\subsection{Preliminaries}\label{subsec: preliminary}

We need to state three propositions about the area forms on a surface in order to carry out the proof of Lemma \ref{lem: new version of the extension lemma}. The two first propositions are already well-known. The third one is new and is the key piece that allows us to go from a Hölder estimate for the area-preserving extension lemma to the Lipschitz estimate of Lemma \ref{lem: new version of the extension lemma}.

We recall that a Borel measure~$\mu$ on a compact manifold~$X$ is said to be an \textit{OU (Oxtoby-Ulam) measure} if it is nonatomic, of full support and is zero on the boundary. The first proposition is proven in \cite{OU}.

\begin{prop}\label{prop: OU proposition}
Let~$\mu$ and~$\nu$ be two OU measures on a rectangular~$r$-cell~$R$ such that~$\mu(R)=\nu(R)$, then there exists a homeomorphism~$h$ which restricts to the identity on the boundary of~$B$ such that~$h^*\nu=\mu$.
\end{prop}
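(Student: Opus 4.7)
My plan is to adapt the classical successive-bisection argument that goes back to Oxtoby and Ulam. After normalizing so that $R = [0,1]^r$ and $\mu(R) = \nu(R) = 1$, I would produce $h$ as the uniform limit of a sequence of homeomorphisms $h_n: R \to R$, each restricting to the identity on $\partial R$, and each satisfying $\nu(h_n(C)) = \mu(C)$ for every cell $C$ of the standard dyadic partition $\mathcal{C}_n$ of $R$ into $2^{nr}$ congruent subcubes. The homeomorphism $h_n$ will be constructed as a refinement of $h_{n-1}$: it agrees with $h_{n-1}$ on the $(r-1)$-skeleton of $h_{n-1}(\mathcal{C}_{n-1})$, and inside each cell $h_{n-1}(C')$ it further partitions the cell into $2^r$ curved subregions whose $\nu$-masses are prescribed to equal the $\mu$-masses of the $2^r$ standard dyadic subcubes of $C'$.

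The heart of the construction is a single \emph{bisection lemma}: for any rectangular cell $R'$ equipped with two OU measures $\mu, \nu$ of equal total mass and any coordinate direction, there exists a homeomorphism $g: R' \to R'$ which is the identity on $\partial R'$ and which carries the standard bisecting hyperplane to a topological hypersurface splitting $R'$ into two pieces of prescribed $\nu$-masses. To produce $g$ I would consider a one-parameter family of embedded hypersurfaces $S_t$ interpolating continuously between the two halves of $R'$, each $S_t$ meeting $\partial R'$ along the same curve as the standard hyperplane. The $\nu$-mass of the region to the left of $S_t$ depends continuously on $t$ and sweeps out $[0, \nu(R')]$, by the full-support and nonatomicity hypotheses on $\nu$, so the intermediate value theorem produces a $t_0$ with the prescribed mass. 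A straightening homeomorphism supported in a collar of $S_{t_0}$ then gives~$g$.

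Iterating the bisection lemma $r$ times per stage, once in each coordinate direction, on each cell of $h_{n-1}(\mathcal{C}_{n-1})$ separately, yields $h_n$ from $h_{n-1}$. Three points then need to be checked: (a) uniform convergence $h_n \to h$, which follows from $\Vert h_n - h_{n-1}\Vert_{C^0} \leq \max_{C' \in \mathcal{C}_{n-1}} \mathrm{diam}(h_{n-1}(C'))$ because consecutive $h_n$ agree on the relevant skeleton; (b) that the limit is a homeomorphism, which requires running the same argument after interchanging the roles of $\mu$ and $\nu$ in order to show that $(h_n^{-1})$ is also a Cauchy sequence; and (c) $h^{\ast}\nu = \mu$, which passes to the limit from the dyadic identities because the dyadic cells generate the Borel $\sigma$-algebra. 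The main obstacle is point (b): one must choose the bisecting hypersurfaces so that the cells $h_n(\mathcal{C}_n)$ themselves shrink uniformly in diameter as $n \to \infty$. This demands quantitative control on how far the chosen wavy wall $S_{t_0}$ can be from the standard hyperplane, and in practice one restricts the family $S_t$ to consist of hypersurfaces uniformly close to hyperplane translates, exploiting once more the nonatomicity and full support of $\mu$ and $\nu$.
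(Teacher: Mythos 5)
The paper does not actually prove Proposition~5.1; it simply cites Oxtoby--Ulam (the sentence immediately preceding reads ``The first proposition is proven in [OU]''), so there is no internal argument to compare against. Your outline is the classical successive-subdivision strategy that does underlie the Oxtoby--Ulam theorem, and the scaffolding you set up (dyadic refinement, a bisection lemma proved by the intermediate value theorem, agreement on skeletons giving a $C^0$ Cauchy estimate, and the back-and-forth for $h_n^{-1}$) is the right shape.

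The genuine gap is in your treatment of point (b). Restricting the family $S_t$ to hypersurfaces ``uniformly close to hyperplane translates'' controls the \emph{waviness} of each cut, but it does not control \emph{where} the translate lands, and that is the real obstruction. For instance, with $r=2$, $\mu$ Lebesgue, and $\nu$ close to a measure concentrated on the curve $\{x=y^{1/10}\}$, the mass-halving horizontal cut must sit near $y\approx 2^{-10}$, so already $h_1(\mathcal{C}_1)$ contains a cell of diameter close to $1$; iterating flat (or nearly flat) cuts, the subcells that straddle the curve keep $x$-width bounded away from $0$ for arbitrarily many stages. So $\max_C\operatorname{diam} h_n(C)$ need not go to $0$ under your scheme, and hence neither $(h_n)$ nor $(h_n^{-1})$ is obviously Cauchy. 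What saves the Oxtoby--Ulam argument is exactly the opposite of your hypothesis: the separating hypersurfaces must be allowed to curve substantially, adapted to the geometry of $\nu$ at each scale, so that the curvilinear image cells can be forced to be small even when the measure is badly distributed (this is how Oxtoby--Ulam, and also the treatments of Goffman--Pedrick and Alpern--Prasad, handle it, together with a more careful alternating refinement of the two partitions). Your intermediate-value-theorem bisection lemma is the easy half; the diameter control is the genuine content of the theorem and needs a different mechanism than the one you describe.
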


We recall here some consequences of Moser's trick \cite{Moser} as described in \cite[Section 3.4.1]{Seyfaddini}.

\begin{prop}[Moser's trick]\label{prop: Moser's trick}
Let~$M$ be a compact connected oriented manifold of dimension~$n$, possibly with a non-empty boundary~$\partial M$, and let~$\omega_1$,~$\omega_2$ be two volume forms on~$M$. Assume that~$\int_M \omega_1=\int_M\omega_2$. If~$\partial M\neq 0$, we also assume that the forms~$\omega_1$ and~$\omega_2$ coincide on~$\partial M$.

Then there exists a diffeomorphism~$f: M \rightarrow M$, isotopic to the identity, such that~$f^* \omega_2=\omega_1$. Moreover,~$f$ can be chosen to satisfy the following properties: 

\medskip

(i) If~$\partial M \neq \emptyset$, then~$f$ is the identity on~$\partial M$, and if~$\omega_1$ and~$\omega_2$ coincide near~$\partial M$, then~$f$ is the identity near~$\partial M$.

\medskip
 
(ii) If~$M$ is partitioned into polyhedra (with piece-wise smooth boundaries), so that~$\omega_1-\omega_2$ is zero on the~$(n-1)$-skeleton~$\Gamma$ of the partition and the integral of~$\omega_1$ and~$\omega_2$ on each of the polyhedra are equal, then~$f$ can be chosen to be the identity on~$\Gamma$.
 
\medskip
 
(iii) Suppose that~$\omega_2=\chi\omega_1$ for a function~$\chi$, then the diffeomorphism~$f$ can be chosen to satisfy the following estimate: 
$$\Vert f \Vert_{C^0}\leq C \lVert\chi-1 \rVert_{C^0},$$
for some~$C>0$. Here,~$\lVert \cdot \rVert_{C^0}$ denotes the standard sup norm on functions.
\end{prop}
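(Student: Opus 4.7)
The plan is to apply the classical Moser path method. Define the linear interpolation $\omega_t := (1-t)\omega_1 + t\omega_2$ for $t \in [0,1]$; since $\omega_1$ and $\omega_2$ are volume forms inducing the same orientation with equal total integral, each $\omega_t$ is a volume form (and for (iii), when $\chi$ is close to $1$, this is quantitatively controlled). I seek an isotopy $\{f_t\}_{t \in [0,1]}$ with $f_0 = Id$ and $f_t^*\omega_t = \omega_1$; the required diffeomorphism is $f := f_1$.

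Differentiating $f_t^*\omega_t = \omega_1$ in $t$ and using Cartan's magic formula together with $d\omega_t = 0$, one reduces to the cohomological equation $d(\iota_{X_t}\omega_t) = \omega_1 - \omega_2$, where $X_t$ is the vector field generating $f_t$. Since $\int_M \omega_1 = \int_M \omega_2$ (and the two forms coincide on $\partial M$ when the boundary is nonempty), the difference $\omega_1 - \omega_2$ represents the zero relative cohomology class, hence admits a primitive: pick an $(n-1)$-form $\beta$ with $d\beta = \omega_1 - \omega_2$. Nondegeneracy of $\omega_t$ then yields a unique $X_t$ with $\iota_{X_t}\omega_t = \beta$, and compactness of $M$ ensures the flow exists for all $t \in [0,1]$, producing the desired isotopy.

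The refinements (i) and (ii) rest on the freedom in choosing $\beta$. When $\omega_1$ and $\omega_2$ agree on (or near) $\partial M$, one can select $\beta$ vanishing on (or near) $\partial M$, forcing $X_t = 0$ and hence $f_t = Id$ there. For the partitioned case, on each polyhedron $P$ the form $\omega_1 - \omega_2$ has zero integral over $P$ and vanishes on $\partial P$, so it admits a primitive vanishing on $\partial P$; gluing these local primitives across the $(n-1)$-skeleton $\Gamma$ produces a global $\beta$ vanishing on $\Gamma$, and hence $f_t$ restricts to the identity on $\Gamma$.

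The delicate point is the $C^0$-estimate (iii). Writing $\omega_2 - \omega_1 = (\chi - 1)\omega_1$, the key input is a bounded right inverse for the exterior derivative: using either a Hodge-theoretic solution $\beta = d^*\Delta^{-1}(\omega_1 - \omega_2)$, or explicit integral kernels on a finite good cover combined with a controlled partition of unity, one constructs a primitive satisfying $\|\beta\|_{C^0} \leq C_1 \|\chi - 1\|_{C^0}$. For $\chi$ sufficiently close to $1$, the volume form $\omega_t$ is uniformly bounded away from zero, so the pointwise norm of $X_t$ with respect to a fixed background metric obeys $|X_t| \leq C_2 \|\beta\|_{C^0}$. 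Integrating along trajectories yields
$$d(x, f_t(x)) \leq \int_0^t |X_s(f_s(x))|\,ds \leq C\,\|\chi - 1\|_{C^0},$$
which is the claimed estimate. The main obstacle is supplying the bounded primitive operator; this is a standard but nontrivial functional-analytic fact, and once available all three assertions follow simultaneously from a single careful choice of $\beta$.
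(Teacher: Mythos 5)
The paper does not actually prove this proposition: it is stated as a recollection of ``some consequences of Moser's trick'' and referred to \cite{Moser} and \cite[Section~3.4.1]{Seyfaddini}, so there is no in-paper proof to compare against. Your sketch is indeed the classical Moser path method, and at the level of structure it is the right argument: interpolate $\omega_t=(1-t)\omega_1+t\omega_2$, reduce to solving $d(\iota_{X_t}\omega_t)=\omega_1-\omega_2$, and obtain $f=f_1$ from the flow. Parts (i) and (ii) are correctly traced to the freedom in choosing the primitive $\beta$, and the reduction of (iii) to a bounded right inverse of $d$ plus a flow-length bound is the right idea.

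The place where the proposal is thin is precisely the place where all the content of the quantitative statement lives, and your two suggested routes to the bounded primitive do not both mesh with the other two requirements. The Hodge-theoretic primitive $\beta=d^*\Delta^{-1}(\omega_1-\omega_2)$ can be given a $C^0$ bound via elliptic regularity and Sobolev embedding, but it has no reason whatsoever to vanish on $\partial M$ or on the skeleton $\Gamma$, so it cannot be the ``single careful choice of $\beta$'' realizing (i), (ii) and (iii) simultaneously. What is really needed, and what the application in the paper uses, is a primitive constructed \emph{locally on each polyhedron}, vanishing on its boundary, and with a $C^0$ bound whose constant is proportional to the \emph{diameter of that polyhedron} (this is how the author later gets $\Vert h_2\Vert_{C^0}=O(\delta)$ on squares of side $O(\delta)$ even though $\Vert\chi-1\Vert_{C^0}$ is only $O(1)$ there). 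That construction (iterated one-variable integrations, or an axis-by-axis averaging scheme on a cube, arranged so the output vanishes on the whole boundary and not just one face) is the genuine nontrivial step, and it is somewhat more delicate than the naive $\beta=\bigl(\int_0^{x_n} g\bigr)\,dx_1\wedge\cdots\wedge dx_{n-1}$, which vanishes on only one face. One should also say explicitly why the glued $\beta$, built polyhedron by polyhedron and vanishing on $\Gamma$, yields a flow that is smooth across $\Gamma$ (and not just a homeomorphism fixing $\Gamma$). You honestly flag the primitive estimate as a standing assumption, which is fair; but as written, the way the three refinements are claimed to ``follow simultaneously'' from the Hodge (or unspecified integral-kernel) solution is not justified and would need to be replaced with the local polyhedral construction to actually work.
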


We describe now a lemma which allow us to adjust two volume forms by a~$C^0$-small diffeomorphism if they disagree on a small strip only. This is the new idea that allowed us to improve the continuity property for the extension lemma.

\begin{prop}\label{prop: adjusting volume forms}
Let~$C\in \mathbb{R}$ be a constant and~$M^{n-1}$ a compact manifold equipped with a volume form~$\omega'$ and a distance~$d'$, we also define~$d$ the product distance on~$M^{n-1}\times [-1,1]$. Let~$\omega=\omega' \wedge dz$ and~$\Omega$ two volume forms on~$M^{n-1}\times [-1,1]$, where~$z$ denote the coordinate on~$[-1,1]$. Let~$\chi$ be the function such that~$\omega=(1+\chi)\Omega$. We assume that: 

\begin{itemize}
    \item~$\int_{M^{n-1}\times [-1,1]}\omega =\int_{M^{n-1}\times [-1,1]}\Omega.$
    \item~$\Vert \chi \Vert_{C^0}\leq C.$
    \item There exists~$\delta>0$ such that~$\text{Supp}(\chi)\subset M^{n-1}\times [0,\delta].$
\end{itemize}
Then, there exists a constant~$D\in \mathbb{R}$ independent of~$\delta$ such that we can find~$f\in \text{Diff}(M^{n-1}\times[-1,1])$,~$f\equiv Id$ on~$M^{n-1}\times [-1,0]$,~$f^*\Omega=\omega$ and~$\Vert f \Vert_{C^0}\leq D\delta$.
\end{prop}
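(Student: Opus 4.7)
My plan is to apply Moser's trick (Proposition \ref{prop: Moser's trick}) to the linear interpolation $\omega_t = (1-t)\omega + t\Omega$, with a carefully constructed primitive of $\omega - \Omega$ that reflects the thin-strip structure. Write $\omega - \Omega = g\omega$, with $g$ supported in $M^{n-1}\times[0,\delta]$ and $\|g\|_{C^0}$ controlled by $C$, and decompose $g = \bar g(z) + \tilde g(x,z)$, where $\bar g$ is the $M$-average on each fiber and $\tilde g$ has vanishing $M$-mean on each fiber. The hypothesis $\int g\,\omega = 0$ forces $\int_{-1}^1\bar g(z)\,dz = 0$.

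I would seek the primitive in the form $\alpha = H(z)\omega' + \beta(x,z)\wedge dz$, so that the equation $d\alpha = g\omega$ splits on each fiber as
\begin{equation*}
(-1)^{n-1}H'(z)\,\omega' + d_x\beta(\cdot,z) = g(\cdot,z)\,\omega'.
\end{equation*}
Integrating against $\omega'$ pins $H' = (-1)^{n-1}\bar g$, so $H(z) = (-1)^{n-1}\int_{-1}^z\bar g$ is supported in $[0,\delta]$ and satisfies $\|H\|_{C^0}\le C_1\delta$. The residual equation $d_x\beta(\cdot,z) = \tilde g(\cdot,z)\omega'$ is solvable on each fiber by elliptic theory on $M^{n-1}$ (Hodge decomposition plus Sobolev embedding $W^{1,p}\hookrightarrow C^0$ for $p>n-1$). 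A direct application only gives $\|\beta\|_{C^0} = O(1)$, uniformly in $\delta$ -- this is the source of the H\"older-type estimate in Seyfaddini's earlier argument, and is the main obstacle to be overcome.

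Under Moser's trick the vector field $X_t = Y_t + \mu_t\partial_z$ determined by $\iota_{X_t}\omega_t = \alpha$ satisfies $\mu_t = (-1)^{n-1}H/\rho_t$ and $\iota_{Y_t}\omega' = \beta/\rho_t$, where $\rho_t = 1 - tg/(1+\chi)$ is uniformly bounded away from zero. The $z$-displacement is then directly $|\mu_t|\cdot 1 \le C_2\delta$, but controlling the $x$-displacement requires sharpening the estimate on $\beta$. The key is to reduce first to a lower-dimensional problem: the $z$-integrated $(n-1)$-form $\eta := \bigl(\int_{-1}^1\tilde g(\cdot,z)\,dz\bigr)\omega'$ on $M^{n-1}$ is exact (since $\int\tilde g\,\omega = 0$) and satisfies $\|\eta\|_{C^0}\le C\delta$ thanks to the strip being of width $\delta$ and $\tilde g$ being bounded. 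By Proposition \ref{prop: Moser's trick}(iii) (equivalently, Moser on $M^{n-1}$ between $\omega'$ and $\omega' + \eta/2$), we obtain a diffeomorphism $\phi\in\mathrm{Diff}(M^{n-1})$ with $\|\phi\|_{C^0}\le D_1\delta$ implementing the corresponding horizontal transport. This matches the $z$-marginal of $\Omega$ to that of $\omega$, and reduces the residual to a fiber-wise problem whose vanishing $z$-integrated mean allows another $z$-directional primitive of size $O(\delta)$ (the construction for the $\tilde g$ residual being parallel to that for $\bar g$).

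Combining the preliminary horizontal adjustment with the vertical Moser flow produces the desired $f$ as a composition; one checks that $\alpha$ is supported in a slight enlargement of $M^{n-1}\times[0,\delta]$, so $X_t\equiv 0$ on $M^{n-1}\times[-1,0]$ (giving $f\equiv Id$ there) and on a neighborhood of $M^{n-1}\times\{1\}$ (giving compact support). The resulting displacement estimate $\|f\|_{C^0}\le D\delta$ follows by integrating the vector-field bounds in $t$, with $D$ depending on $C$, on $M^{n-1}$, and on the Moser constants but not on $\delta$. The hard part is genuinely Step two: converting the naive $O(1)$ bound on the fiber-wise primitive into the sharp $O(\delta)$ bound by using a lower-dimensional Moser on $M^{n-1}$ applied to the $z$-integrated data, which is itself of size $O(\delta)$ by the thinness of the strip -- this two-layer decomposition is what distinguishes the Lipschitz regime from the H\"older regime.
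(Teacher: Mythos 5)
Your plan is genuinely different from the paper's proof, and there is a gap in it that I don't see how to close. You correctly identify the bottleneck (the fiber-wise horizontal primitive $\beta$ is only $O(1)$), and your idea to exploit the $z$-integrated data $\eta$ being $O(\delta)$ is natural. But the horizontal pre-adjustment step is not carried through. Any extension $\Phi$ of $\phi\in\mathrm{Diff}(M^{n-1})$ to $M^{n-1}\times[-1,1]$ must equal the identity on $M^{n-1}\times[-1,0]$ (this is required of $f$), so $\phi$ cannot be applied as $\phi\times\mathrm{Id}$; it must be cut off in $z$. But the size estimate $J_\phi-1 = O(\delta)$ comes from the $z$-marginal computation, i.e.\ from integrating the defect against $dz$ over all of $[-1,1]$: you need $J_\phi(2+G\circ\phi)=2$. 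If instead the cutoff forces $\phi$ to act only over a $z$-interval of width $O(\delta)$ (the only region compatible with $f\equiv\mathrm{Id}$ for $z\le 0$), the relevant equation becomes $J_\phi(\delta+G\circ\phi)=\delta$, giving $J_\phi-1=O(1)$ and hence $\|\phi\|_{C^0}=O(1)$, which defeats the purpose. A wider cutoff restores the $O(\delta)$ estimate but then $\Phi$ no longer equals the identity on $M^{n-1}\times[-1,0]$, and moreover the cutoff region carries fresh density defects whose $z$-integral over a column is no longer zero a priori. The claimed reduction "to a fiber-wise problem with vanishing $z$-integrated mean" does not survive these modifications, and the remark that "$\alpha$ is supported near the strip" addresses the Moser primitive, not the compact support of the horizontal pre-adjustment, which is a separate issue.

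The paper sidesteps all of this with a purely vertical construction and no $\bar g/\tilde g$ splitting. Setting $\rho_\delta(z)=2\delta z$, one forms two shear maps
\begin{equation*}
\Psi_1(x,z)=\Bigl(x,\int_0^z(1+\chi(x,t))\,dt\Bigr),\qquad
\Psi_2(x,z)=\Bigl(x,\int_0^z(1+\rho_\delta'(t)\chi(x,\rho_\delta(t)))\,dt\Bigr),
\end{equation*}
both identity for $z\le 0$ and agreeing for $z\ge 1/2$ (since $\int_0^\delta\chi(x,s)\,ds=\int_0^{1/2}\rho_\delta'(t)\chi(x,\rho_\delta(t))\,dt$). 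Thus $\Psi:=\Psi_1^{-1}\circ\Psi_2$ is compactly supported in $M^{n-1}\times(0,1/2)$, has $C^0$-norm $O(\delta)$, and pulls $\Omega$ back to $(1+\tilde\chi)\omega$ with $\tilde\chi(x,z)=\rho_\delta'(z)\chi(x,\rho_\delta(z))$ of uniform norm $\le 2C\delta$. In other words, the rescaling trades the thin-strip/$O(1)$-amplitude defect for a wide-strip/$O(\delta)$-amplitude defect. A single application of Moser's trick (iii) then produces $h$ with $h^*(\Psi^*\Omega)=\omega$ and $\|h\|_{C^0}=O(\delta)$, and $f=\Psi h$ works. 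This avoids any horizontal transport, automatically respects $f\equiv\mathrm{Id}$ on $M^{n-1}\times[-1,0]$, and is the idea you would need to make your argument go through.
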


\begin{proof}
We will denote by~$z$ the last coordinate of the manifold~$M^{n-1}\times [-1,1]$ and by~$x$ the coordinate on~$M^{n-1}$. Let us describe two diffeomorphisms of~$N^n:=M^{n-1}\times \mathbb{R}$, which will be used in the definition of the desired map $f$ on~$M^{n-1}\times [-1,1]$.

Define~$\rho_\delta: \mathbb{R}\rightarrow \mathbb{R}, x\mapsto 2\delta x$. We can then define two maps from~$N^n$ to~$N^n$ by:
$$\Psi_1(x,z):=\left(x,\int_0^z(1+\chi(x,t))\,dt\right)$$
and 
$$\Psi_2(x,z):=\left(x,\int_0^z(1+\rho_\delta'(t)\chi(x,\rho_\delta(t)))\,dt\right).$$

We can then compute 
$$d\Psi_1(x,z):=\begin{pmatrix}
	Id & *\\
	0 &  1+\chi(x,z)
\end{pmatrix},$$
and
$$d\Psi_2(x,z):=\begin{pmatrix}
	Id & *\\
	0 &  1+\rho_\delta'\chi(x,\rho_\delta(z))
\end{pmatrix}.$$

Since~$1+\chi>0$ the function~$\Psi_1$ is a diffeomorphism, in the same manner~$\Psi_2$ is also a diffeomorphism. A simple computation also shows that~$(\Psi_{1})^*\omega=\text{det}(d \Psi_1)\omega=(1+\chi(x,z))\omega=\Omega$ and also~$(\Psi_{2})^*\omega=(1+\rho_\delta'\chi(x,\rho_\delta(z)))\omega$ which gives 
$$\left(((\Psi_1)^{-1} \circ \Psi_2)^*\Omega\right)_{(x,z)}=(1+\rho_\delta'(z)\chi(x,\rho_\delta(z)))\omega_{(x,z)}.$$

Moreover, 
$$d\left((x,z),\Psi_1(x,z)\right)=\left\vert\int_0^z \chi(x,\rho_\delta(t))\,dt\right\vert \leq \Vert \chi \Vert_{C^0} \delta\leq C \delta$$
so~$\Vert \Psi_1 \Vert_{C^0}\leq C\delta$ and similarly~$\Vert \Psi_2 \Vert_{C^0}\leq C \delta$.

If~$z>\delta$, since~$\chi$ has support in a strip we have~$\Psi_1(x,z)=(x,z+c(x))$, where~$c(x)$ is a function independent of~$z$ and whose value is~$c(x)=\int_0^\delta \chi(x,t)\, dt$. If~$z>0.5$, we have, by the same argument, that~$\Psi_2(x,z)=(x,z+d(x))$, where~$d(x)=\int_0^{0.5}\rho_\delta'(t)\chi(x,\rho_\delta(t))\, dt$. 

It follows that those two diffeomorphisms aren't compactly supported in~$M^{n-1}\times [-1,1]$, however since~$c(x)=d(x)$ (simple change of variable in the integral),~$\Psi:=(\Psi_1)^{-1} \circ \Psi_2$ can be restricted on~$M^{n-1}\times [-1,1]$ to a compactly supported diffeomorphism. Moreover,~$$\left(\Psi^*\Omega\right)_{(x,z)}=\left(((\Psi_1)^{-1} \circ \Psi_2)^*\Omega\right)_{(x,z)}=(1+\rho_\delta'(z)\chi(x,\rho_\delta(z)))\omega_{(x,z)}$$ and 
$$\Vert\Psi\Vert_{C^0}\leq \Vert \Psi_1\Vert_{C^0}+\Vert \Psi_2\Vert_{C^0}\leq 2C\delta.$$

By Proposition \ref{prop: Moser's trick} one can find a function~$h\in \text{Diff}_c(M^{n-1}\times[-1,1])$ such that~$h^*(\Psi^*(\Omega))=\omega$ and there exists a constant~$C'$ independent of~$\omega$ and~$\Omega$ such that~$\Vert h \Vert_{C^0}\leq C' \Vert \rho_\delta' \chi(\cdot,\rho_\delta(\cdot))\Vert_{C^0}\leq 2C'C\delta$. It then follows that~$\Psi h$ is the diffeomorphism we needed.

\end{proof}


\subsection{Proofs of Lemma \ref{lem: new version of the extension lemma} and Lemma \ref{lem: area-preserving extension lemma for homeo}}\label{subsec: pf of the area-preserving extension lemma}

The proof of Lemma \ref{lem: new version of the extension lemma} (resp. Lemma \ref{lem: area-preserving extension lemma for homeo}) will go as follows. We first extend the area-preserving embedding~$\phi$ to a~$C^0$-small diffeomorphism (resp. homeomorphism)~$f$ not necessarily preserving the area form. The diffeomorphism (resp. homeomorphism)~$f$ will verify that its~$C^0$-norm is smaller than~$C_1 \delta$ for some constant~$C_1>0$ independent of~$\delta$. Then working on the area form~$f^*\omega$, one can find~$g$ such that:

\begin{itemize}
	\item[$\bullet$] the~$C^0$-norm of~$g$ is bounded by~$C_2 \delta$ for some constant~$C_2>0$ independent of~$\delta$;
    \item[$\bullet$] we have~$g \in \text{Diff}_{0,c}(\mathbb{A}_{1+D_2 \delta})$ for some constant~$D_2$ independent of~$\delta$;
    \item[$\bullet$] if~$\chi$ is such that~$g^*\omega=(1+\chi)\omega$, then~$\Vert \chi \Vert_{C^0} \leq E_1$ for some constant~$E_1>0$ independent of~$\delta$.	
\end{itemize}

We will then be able to apply Proposition \ref{prop: adjusting volume forms} in order to finish the proof.

\begin{proof}[Proof of Lemma \ref{lem: new version of the extension lemma}]
We can assume, without loss of generality, that the area-form $\omega$ is standard. Indeed, if $\omega$ is not standard then we can find a symplectomorphism from the annulus $\mathbb A_2$ endowed with $\omega$ to the annulus $\mathbb A_2$ endowed with the standard area-form. The symplectomorphisms is in particular a bi-Lipschitz (for the distance $d$ induced by the metric) map, this implies that proving the lemma for the standard area-form implies the lemma in the general case.	
	
In what follows, the~$C_i$'s,~$D_i$'s and~$E_i$'s are going to be positive constants independent of~$\delta$, the letter~$C$ will be used for a control of the~$C^0$-norm, the letter~$D$ for a control of the support and the letter~$E$ for a control on an area-form. We can assume without loss of generality that the area form~$\omega$ is~$dx\wedge dy$, where~$x$ denotes the angular coordinate on~$S^1$ and~$y$ the radius along~$\mathbb{A}_3$. We start the proof as in \cite{EPP} by using a non area-preserving diffeomorphism~$f$ extending~$\phi$ as stated in Lemma \ref{lem: smooth extension}, we will discuss this lemma in Section \ref{sec: pf of extension lemma}.

\begin{lemma}\label{lem: smooth extension}
Let~$\phi$ be a smooth embedding of an open neighborhood of~$\mathbb{A}_1$ into~$\mathbb{A}_2$, isotopic to the identity, such that~$\Vert \phi \Vert_{C^0}  \leq \delta$ for some~$\delta>0$ small enough. Then there exist constants~$C_1>0$ and~$D_1>0$ that do not depend of~$\delta$, and there exists~$f\in \text{Diff}_{0,c}\left(\mathbb{A}_{2}\right)$ such that~$f$ is supported in~$\mathbb{A}_{1+D_1\delta}$, satisfies
$$f\vert_{\mathbb{A}_{1-D_1\delta}}=\phi\vert_{\mathbb{A}_{1-D_1\delta}}$$
and
$$\Vert f \Vert_{C^0}\leq C_1\delta.$$
Moreover, if~$\phi=Id$ outside a quadrilateral~$I\times [-1,1]$ and~$f(I\times [-1,1])\subset I\times [-2,2]$ for some arc~$I\subset S^1$, then~$f$ can be chosen to be the identity outside~$I \times [-2,2]$.
\end{lemma}

Denote~$\Omega:= f^*\omega$ and define~$\mathbb{A}_-:=S^1\times [-2,0]$ and~$\mathbb{A}_+:=S^1\times [0,2]$. By the condition on the area between the curves~$S^1\times y$ and~$\phi(S^1 \times y)$ the following equalities hold:
$$\int_{\mathbb{A}_-}\omega = \int_{\mathbb{A}_-}\Omega,\quad \int_{\mathbb{A}_+}\omega = \int_{\mathbb{A}_+}\Omega.$$

We are going to adjust~$f$ by constructing~$h\in \text{Diff}_{0,c}(\mathbb{A}_2)$ such that~$h\vert_{\mathbb{A}_{1-D_3\delta}}=Id$,~$h^*\Omega=\omega$ and~$\Vert h \Vert_{C^0}\leq C_3 \Vert\phi\Vert_{C^0}$, for some~$D_3, C_3>0$. To do so we just need to provide $h_+ \colon \mathbb A_+ \to \mathbb A_+$, that is the identity near $\partial \mathbb A_2$ such that $\smash[t]{h\vert_{\mathbb A_{1-D_3 \delta}}=Id}$, $h_+^*\Omega=\omega$ and $\Vert h_+ \Vert_{C^0}\geq C_3 \Vert \phi \Vert_{C^0}$, for some $D_3, C_3 >0$ on $\mathbb A_+$. Indeed, by symmetry we can then construct $h_- \colon \mathbb A_- \to \mathbb A_-$ with the same properties. We can then define $h$ by $h_+$ on $\mathbb A_+$ and $h_-$ on $\mathbb A_-$. Now define the symplectomorphism~$\Psi=fh$, $\Psi$ has compact support in $\mathbb A_2$. The symplectomorphism~$\Psi$ might not be a Hamiltonian diffeomorphism but since the $\Flux$ of $\Psi$ is controlled by $\Vert \Psi \Vert_{C^0}$, after a Lipschitz~$C^0$-adjustment (given in Lemma \ref{lem: surjectivity of O}), we can make sure that the resulting symplectomorphism $\psi$ is a Hamiltonian diffeomorphism. This would finish the proof of the extension lemma.

\begin{center}
    \textbf{Adjusting the areas of the squares}
\end{center}

The area form $\Omega$ can have very wild behavior, that is some small ball can have very big area in comparison to the area under $\omega$ and vice-versa. In this section we apply Moser's trick to small ``squares" on the annulus in order to control the ratio of those area by a constant. This will allow us to apply Proposition \ref{prop: adjusting volume forms} later.

Divide the annulus~$S^1\times [1-3(1+C_1+D_1)\delta, 1+3(1+C_1+D_1)\delta]$ in~$N$ ``squares''~$R_1,\ldots, R_N$ for some integer~$N$ such that the squares have side length~$6(1+C_1+D_1)\delta$. We denote~$\Gamma$ the 1-skeleton of the partition by squares. Then we can find~$h_1\in \text{Diff}_c(\mathbb{A}_2)$ a~$C^0$-small diffeomorphism, which has~$C^0$-norm as small as we want such that~$h_1^*\Omega$ is equal to~$\omega$ on~$\Gamma$ (for the construction of~$h_1$ we refer to the paragraph \textbf{Adjusting~$\Omega$ on~$\Gamma$} in \cite[Section 6.3]{EPP}). Denote
$$\Omega':=h_1^*\Omega,$$
and assume that~$\Vert fh_1 \Vert_{C^0}\leq (1+C_1)\delta$ by asking~$\Vert h_1 \Vert_{C^0}\leq \delta$. Note that here we have
$$\int_{\mathbb{A}_+}\Omega'=\int_{\mathbb{A}_+}\omega.$$

\medskip

We will use the same method as in the paragraph \textbf{Adjusting the areas of the squares} of \cite{EPP} but modify it in order to apply Proposition \ref{prop: adjusting volume forms} as we wish. On a square~$R_i$ we obtain the following inequality by considering the fact that~$\Vert fh_1 \Vert_{C^0}\leq (1+C_1)\delta$ so the image of~$R_i$ by~$fh_1$ is inside a square of side length~$(8+8C_1+6D_1)\delta$ and also such that the square of side length~$(4+4C_1+6D_1)\delta$ is inside of the image of~$R_i$, this gives the following estimates
$$\dfrac{4(2+2C_1+3D_1)^2 \delta^2}{ 36(1+C_1+D_1)^2 \delta^2}\leq \dfrac{\int_{R_i}\Omega'}{\int_{R_i}\omega}\leq \dfrac{4(4+4C_1+3D_1)^2\delta^2}{36(1+C_1+D_1)^2\delta^2}$$
and after simplification, 
$$\dfrac{(2+2C_1+3D_1)^2}{9(1+C_1+D_1)^2}\leq \dfrac{\int_{R_i}\Omega'}{\int_{R_i}\omega}\leq \dfrac{(4+4C_1+3D_1)^2}{9(1+C_1+D_1)^2}<\dfrac{16(1+C_1+D_1)^2}{9(1+C_1+D_1)^2}=\dfrac{16}{9}.$$

By renaming the constant on the left-hand side and right-hand side of the inequality, and setting~$s_i:=\int_{R_i}\Omega'$,~$r_i:=\int_{R_i}\omega$ we can eventually rewrite this as: 
\begin{equation}
    0<1-A\leq \dfrac{s_i}{r_i}\leq 1+A,
    \label{2}
\end{equation}
for some constant~$1>A>0$ independent of~$\delta$. Set~$t_i:=\frac{s_i}{r_i}-1$. By \eqref{2}, 
\begin{equation}
    \vert t_i \vert \leq A<1.
    \label{3}
\end{equation}

For each~$i$ choose a non-negative function~$\overline{\rho}_i$ supported in the interior of~$R_i$ such that~$\int_{R_i}\overline{\rho}_i\omega=r_i$ and 
\begin{equation}
    \Vert \overline{\rho}_i \Vert_{C^0}\leq E_2 < \dfrac{1}{A},
    \label{4}
\end{equation}
for some constant~$E_2$ independent of~$\delta$. Define a function~$\varrho$ on~$\mathbb{A}_{1+C_1+D_1}$ by
$$\varrho=1+\sum_i t_i\overline{\rho}_i.$$

We denote $D_2:=3(1+C_1+D_1)$. By \eqref{3} and \eqref{4}, we see that~$\varrho$ is positive. Moreover,~$\varrho$ is equal to~$1$ over~$\Gamma$ and the two area forms~$\varrho \omega$ and~$\Omega'$ have the same integral on each square~$R_i$. We can thus apply part (iii) of Proposition \ref{prop: Moser's trick} to the forms~$\Omega'$ and~$\varrho\omega$ on~$S^1\times [1-D_2\delta, 1+D_2\delta]$ and the skeleton~$\Gamma$: these forms coincides near the boundary of~$S^1\times [1-D_2\delta, 1+D_2\delta]$, therefore there exists a diffeomorphism~$h_2$ with compact support in~$S^1\times [1-D_2\delta, 1+D_2\delta]$ such that~$h_2^*\Omega=\varrho\omega$ and~$\Vert h_2\Vert_{C^0}\leq 2\sqrt{2D_2}\delta$.

Note that 
$$\int_{\mathbb{A}_+}\varrho\omega=\int_{\mathbb{A}_+}\Omega'=\int_{\mathbb{A}_+}\omega.$$

\medskip

In conclusion, we constructed a diffeomorphism~$fh_1h_2:=g \in \text{Diff}_c(\mathbb{A}_{1+D_2\delta})$ such that:
\begin{itemize}
    \item the~$C^0$-norm is bounded,~$d(fh_1h_2, Id)\leq (1+C_1+2\sqrt{2D_2})\delta=:C_2 \delta$,
    \item the support is controlled,~$fh_1h_2\vert_{\mathbb{A}_{1-D_2\delta}}=\phi$,
    \item the pullback of the area form is controlled by~$\varrho\omega= (fh_1h_2)^*\omega= (1+\chi)\omega$ and~$\Vert \chi \Vert_{C^0}\leq E_1:=E_2A$.
\end{itemize}

We define
$$\Omega'':=g^*\omega.$$
We can now apply Proposition \ref{prop: adjusting volume forms} with~$M=S^1$ and the two area forms~$\omega$ and~$\Omega''$ in order to find~$h$ and finish the proof.

We describe now the two changes than we need to make in order to address the second part of Lemma \ref{lem: new version of the extension lemma}.
\begin{itemize}
\item[$\bullet$] According to Lemma \ref{lem: smooth extension} we can choose $f$ such that $f$ is the identity outside $I \times [-2, 2]$.
\item[$\bullet$] Then, $f^* \Omega$ and $\omega$ agree on the complement of $I \times [-2, 2]$. This means that upon applying Moser's trick or Proposition \ref{prop: adjusting volume forms} we can just leave the surface outside of $I \times [-2, 2]$ to be preserved.
\end{itemize}
This finishes the proof of Lemma \ref{lem: new version of the extension lemma}.

\end{proof}

To adapt the extension lemma in the continuous case, we will only describe the changes in the previous proof. The overall idea stays the same in both proofs.

\begin{proof}[Proof of Lemma \ref{lem: area-preserving extension lemma for homeo}]
We extend the continuous area-preserving embedding~$\phi$ to a compactly supported homeomorphism $f$ of~$\mathbb{A}_2$ with the help of Lemma \ref{lem: continuous extension} (analogue of Lemma \ref{lem: smooth extension}).

\begin{lemma}\label{lem: continuous extension}
Let~$\phi$ be a continuous area-preserving embedding of an open neighborhood of~$\mathbb{A}_1$ into~$\mathbb{A}_2$, isotopic to the identity, such that~$\Vert\phi \Vert_{C^0}  \leq \delta$ for some~$\delta>0$ small enough. Then there exist constants~$C_1>0$ and~$D_1>0$ that do not depend on~$\delta$ and~$f\in \text{Homeo}_{0,c}\left(\mathbb{A}_{2}\right)$ such that~$f$ is supported in~$\mathbb{A}_{1+D_1\delta}$, satisfies
$$f\vert_{\mathbb{A}_{1-D_1\delta}}=\phi\vert_{\mathbb{A}_{1-D_1\delta}}$$
and 
$$\Vert f \Vert_{C^0}\leq C_1\delta.$$

Moreover, if~$f=Id$ outside a quadrilateral~$I\times [-1,1]$ and~$f(I\times [-1,1])\subset I\times [-2,2]$ for some arc~$I\subset S^1$, then~$f$ can be chosen to be the identity outside~$I \times [-2,2]$.
\end{lemma}

Denote~$\nu:=f^*\omega$ ($\nu$ is then an OU measure) then, by the condition on nullity of the area between~$S^1 \times y$ and~$\phi(S^1 \times y)$ in Lemma \ref{lem: area-preserving extension lemma for homeo}, the following equalities hold: 
$$\omega(\mathbb{A}_-) = \nu(\mathbb{A}_-),\quad \omega(\mathbb{A}_+) = \nu(\mathbb{A}_+).$$

We proceed as before and we claim that the paragraph \textbf{Modifying the area form~$\Omega$ to find a constant estimate} is actually now easier. Indeed everything transpose at one exception, since it is not mandatory to obtain a diffeomorphism we do not have to adjust the~$\Omega$ on the skeleton~$\Gamma$. We just have to apply Proposition \ref{prop: OU proposition} directly on squares~$R_1,\ldots, R_n$ alongside the neighborhood of~$\mathbb{A}_1$. The resulting~$C^0$-small homeomorphism~$h_1$ is such that~$h_1^*\nu=\varrho \omega$ is now an area-form, this means that we can apply Proposition \ref{prop: adjusting volume forms} and finish the proof of Lemma \ref{lem: area-preserving extension lemma for homeo}.
\end{proof}


\section{Proof of the extension lemmas}\label{sec: pf of extension lemma}

The proof of Lemma \ref{lem: smooth extension} can be found, after some small adjustment, in \cite[Section 6.4]{EPP}. We present instead a proof of Lemma \ref{lem: continuous extension}, the proof is an adaptation to the continuous case of Lemma 5 in \cite[Section 6.4]{EPP}. For this we will need an adaptation of the appendix of Michael Khanevsky from the same paper.

\begin{lemma}\label{lem: continuous Khanevsky extension}
Set~$L:=S\times 0$ in~$\mathbb{A}_1$. Assume that~$\phi$ is a continuous embedding of an open neighborhood of~$L$ in~$\mathbb{A}_1$, such that~$\Vert \phi \Vert_{C^0} \leq \varepsilon$ for some~$\varepsilon$ small enough.

Then there exists a homeomorphism~$\psi$ of~$\mathbb{A}_{D\varepsilon}$ such that~$\psi=\phi$ on~$L$ and~$\Vert \psi \Vert_{C^0} \leq C\varepsilon$ for some~$D,C>0$ independent on~$\varepsilon$.

Moreover, if~$\phi=Id$ outside some arc~$I\subset L$ and~$\phi(I)\subset I\times [-1,1]$, then we can also chose~$\psi$ with~$\psi$ being the identity outside~$I\times [-1,1]$.
\end{lemma}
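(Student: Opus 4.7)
The plan is to adapt Khanevsky's extension argument (appendix of \cite{EPP}) to the continuous setting, replacing the use of smooth tubular neighborhoods by the annular Jordan--Schönflies theorem together with an Alexander-type interpolation. Choose $D > 2$ large enough that $\phi(L) \subset S^1 \times [-\varepsilon, \varepsilon] \subset \mathbb{A}_{D\varepsilon/2}$. Since $\phi$ is homotopic to the inclusion and $\phi|_L$ is injective, $\phi(L)$ is an essential Jordan curve in $\mathbb{A}_{D\varepsilon}$, and therefore separates $\mathbb{A}_{D\varepsilon}$ into two closed topological annuli $A^+, A^-$ whose outer boundaries are $S^1 \times \{+D\varepsilon\}$ and $S^1 \times \{-D\varepsilon\}$ respectively. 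Let $B^\pm$ denote the two halves of $\mathbb{A}_{D\varepsilon}$ cut by $L$.

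The construction of $\psi$ proceeds in two stages. First, one builds a quantitative annular Schönflies homeomorphism $h \colon \mathbb{A}_{D\varepsilon} \to \mathbb{A}_{D\varepsilon}$ which maps $L$ onto $\phi(L)$, is the identity on $\partial \mathbb{A}_{D\varepsilon}$, and satisfies $\|h\|_{C^0} \leq C\varepsilon$. This $h$ can be produced by foliating each of $A^\pm$ and the corresponding $B^\pm$ by short arcs connecting the inner and outer boundary components, then matching arcs to arcs; because both $A^\pm$ and $B^\pm$ lie inside the thin annulus $\mathbb{A}_{D\varepsilon}$, the $C^0$-bound follows automatically from the fact that every point is displaced by no more than the diameter of $\mathbb{A}_{D\varepsilon}$ in the radial direction. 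Second, the composite $h^{-1} \circ \phi|_L$ is a homeomorphism of $L = S^1$ at $C^0$-distance at most $2\varepsilon$ from the identity; its angular lift is therefore an increasing perturbation of the identity of $\mathbb{R}$ with displacement $\leq 2\varepsilon$, and extends by Alexander-type interpolation, namely $(x, s) \mapsto (x + (\widetilde{a}(x) - x)\tau(s), s)$ for a cutoff $\tau$ with $\tau(0) = 1$ and $\tau(\pm D\varepsilon) = 0$, to a homeomorphism $g$ of $\mathbb{A}_{D\varepsilon}$ which is the identity on $\partial \mathbb{A}_{D\varepsilon}$ and satisfies $\|g\|_{C^0} \leq 2\varepsilon$. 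Setting $\psi := h \circ g$ then gives $\psi|_L = h \circ (h^{-1} \circ \phi|_L) = \phi|_L$ and $\|\psi\|_{C^0} \leq C\varepsilon$.

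For the moreover clause, when $\phi = \mathrm{Id}$ outside the rectangle $I \times [-1,1]$, the image $\phi(L)$ coincides with $L$ outside the vertical column above and below $I$. The Schönflies homeomorphism $h$ can then be chosen to be the identity outside $I \times [-1, 1]$ (by performing the foliation-matching only where $\phi(L)$ differs from $L$), and the Alexander interpolation $g$ vanishes outside the arc $I$ because $h^{-1} \circ \phi|_L$ is the identity there; consequently $\psi = \mathrm{Id}$ outside $I \times [-1,1]$, provided $D\varepsilon \leq 1$.

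The main technical obstacle is the construction of the quantitative Schönflies homeomorphism $h$: the classical annular Schönflies theorem only provides an abstract homeomorphism with no metric control. One way to recover the $C^0$-bound is to subdivide $\phi(L)$ into finitely many arcs, each so short that it lies in a small topological disk in $\mathbb{A}_{D\varepsilon}$ together with the corresponding arc of $L$, apply the planar Schönflies theorem in each disk (where the $C^0$-distance is bounded by the diameter of the disk, hence by $\varepsilon$), and glue the resulting local extensions. Once $h$ is in hand, the remainder of the construction is a routine Alexander interpolation and the $C^0$-estimates follow directly from the thinness of $\mathbb{A}_{D\varepsilon}$.
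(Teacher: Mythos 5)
Your high-level strategy — separate the construction of a quantitative Schönflies homeomorphism $h$ taking $L$ to $\phi(L)$ from an Alexander-type interpolation that corrects the parametrization on $L$ — is the same one the paper uses, and its two building blocks (local use of Jordan--Schönflies on small pieces, then a straightening/reparametrization step) match the paper's Steps~1--4. The problem is that the single step you flag as ``the main technical obstacle'' is precisely the content of the paper's proof, and your two proposed mechanisms for it do not actually work as stated.

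The first mechanism — foliating $A^\pm$ and $B^\pm$ by arcs from the inner to the outer boundary and matching arcs to arcs — does not give a $C^0$-bound ``automatically.'' The displacement in the radial direction is indeed bounded by $D\varepsilon$, but there is nothing preventing an arc-to-arc matching from moving points by a macroscopic amount in the $S^1$-direction (a nearby arc of the foliation of $A^+$ can land on a far-away arc of the foliation of $B^+$ if the matching is not constructed with care). Controlling the angular coordinate is exactly the hard part, and merely observing that the annulus is thin does not solve it. The second mechanism — subdividing into short arcs, applying planar Schönflies in small disks, and ``gluing the resulting local extensions'' — is closer to the paper, but gluing is the step that needs work: adjacent disks necessarily overlap, and the local Schönflies homeomorphisms have no reason to agree on the overlaps, so you cannot simply compose or patch them. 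The paper resolves this with a two-pass argument: it places markers $x_1,\dots,x_{2n}$ at spacing $\asymp\varepsilon$ along $L$ with associated overlapping rectangles $R_k$, applies Corollary~\ref{coro: JS} first on the \emph{odd} rectangles (which are pairwise disjoint) to replace $\gamma|_{[t_k,t_{k+1}]}$ by a chord $r_k$, and then on the \emph{even} rectangles, where the graph $\Gamma_k$ is modified to $R'_k$ — its left and right sides are re-routed along the chords $p_k$, $q_k$ and along auxiliary polygonal arcs $s_{k\pm 1}$ — precisely so that the second pass is supported away from the chords already fixed in the first pass. That double bookkeeping (disjointness within each parity class, modified boundaries in the second pass, and the verification that $\gamma|_{[t_k,t_{k+1}]}\subset R_k$ so that each graph is genuinely planar and $2$-connected) is where the lemma lives; without it, the proposal is an outline of a plan rather than a proof.

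Two smaller points: the outcome of the paper's Steps~1--2 is a \emph{piecewise-linear} curve $\bigcup r_k$, not yet $L$, and a dedicated Step~3 (a flow of a cut-off vector field) is used to straighten it before the Step~4 reparametrization; your Alexander step conflates these. And for the ``moreover'' clause you would additionally need to check that the markers $x_k$ can be chosen so that the rectangles $R_k$ sitting over the complement of $I$ carry the identity map throughout the construction; this is consistent with the paper's setup but is not automatic from the one-line remark you give.
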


\begin{rk}
In Lemma 6 in \cite{EPP}, the diffeomorphism $\psi$ constructed by Khanevsky is supported in $\mathbb A_2$ while the diffeomorphism (or homeomorphism) we constructed has much smaller support. Proving this does not require more effort since everything that Khanevsky does can be done with small enough support. We point out that our stronger statement is needed in the proofs of Lemma \ref{lem: new version of the extension lemma} and Lemma \ref{lem: area-preserving extension lemma for homeo}.
\end{rk}

We recall the following corollary of the Jordan-Schönflies Theorem presented in \cite{Thomassen}.

\begin{coro}\label{coro: JS}
Let~$\Gamma$ and~$\Gamma'$ be two 2-connected plane graphs (planar graphs embedded in~$\mathbb{R}^2$) and~$g$ a homeomorphism and plane-homeomorphism (i.e.~$g$ is an isomorphism such that a cycle in~$\Gamma$ is a face boundary of~$\Gamma$ if and only if the corresponding is a face boundary of~$\Gamma'$) of~$\Gamma$ onto~$\Gamma'$. Then~$g$ can be extended to a homeomorphism of the whole plane.
\end{coro}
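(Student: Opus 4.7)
The plan is to reduce the construction of $\psi$ to an application of Corollary \ref{coro: JS}, by building two combinatorially equivalent 2-connected plane graphs in $\overline{\mathbb{A}_{D\varepsilon}}$ (for a fixed universal constant $D>2$), one containing $L$ and the other containing $\phi(L)$, and by exhibiting between them a plane-homeomorphism whose restriction to $L$ coincides with $\phi|_L$. First, using the uniform continuity of $\phi$ on a compact neighborhood of $L$, I would choose cyclically ordered points $p_0,\ldots,p_{n-1}$ on $L$ (with $p_n=p_0$) so that each closed subarc $\ell_i:=[p_i,p_{i+1}]\subset L$ and its image $\phi(\ell_i)$ both have diameter at most $\varepsilon$. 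Let $\theta_i\in S^1$ denote the angular coordinate of $p_i$. The number $n$ depends on $\phi$, but the final constants $C,D$ will not.

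Next I define the first plane graph $\Gamma\subset\overline{\mathbb{A}_{D\varepsilon}}$ with vertex set $\{p_i\}\cup\{q_i^\pm\}$, where $q_i^\pm:=(\theta_i,\pm D\varepsilon)$, and edges given by the arcs $\ell_i$ on $L$, the arcs $\ell_i^\pm$ on $S^1\times\{\pm D\varepsilon\}$ between consecutive $q_i^\pm$, and the vertical segments $v_i^\pm:=\{\theta_i\}\times[0,\pm D\varepsilon]$. Its $2n$ bounded faces are small rectangles of diameter at most $(1+D)\varepsilon$. The second graph $\Gamma'$ has the same combinatorial type, with $p_i$ replaced by $\phi(p_i)$, $\ell_i$ by $\phi(\ell_i)$, the $q_i^\pm$ and $\ell_i^\pm$ unchanged, and the vertical edges replaced by pairwise disjoint arcs $v_i'^\pm$ from $\phi(p_i)$ to $q_i^\pm$ (constructed below). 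Defining $g:\Gamma\to\Gamma'$ by $g(p_i):=\phi(p_i)$, $g(q_i^\pm):=q_i^\pm$, $g|_{\ell_i}:=\phi|_{\ell_i}$, $g|_{\ell_i^\pm}:=\mathrm{Id}$, and $g|_{v_i^\pm}$ any homeomorphism onto $v_i'^\pm$, yields a plane-homeomorphism between two 2-connected plane graphs. Corollary \ref{coro: JS} then extends $g$ to a homeomorphism $\widetilde g$ of the whole plane, which since $\widetilde g$ is the identity on the outer boundary $\partial\overline{\mathbb{A}_{D\varepsilon}}$ can be taken to fix the exterior; restricting to the annulus gives the desired $\psi$.

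The main technical point is the construction of the arcs $v_i'^\pm$. Since $\Vert\phi\Vert_{C^0}\leq\varepsilon$, the Jordan curve $\phi(L)$ lies in $\mathbb{A}_\varepsilon$, so by Jordan--Schönflies the complement $\overline{\mathbb{A}_{D\varepsilon}}\setminus\phi(L)$ is a disjoint union of two closed topological annuli $U^\pm$, and $U^+$ contains the strip $S^1\times[2\varepsilon,D\varepsilon]$. I would construct $v_i'^+$ as the concatenation of a short arc $\beta_i^+\subset U^+$ from $\phi(p_i)$ to the auxiliary point $r_i:=(\theta_i,2\varepsilon)$, followed by the vertical segment from $r_i$ to $q_i^+$, and symmetrically in $U^-$. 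The arcs $\beta_i^+$ are to be chosen pairwise disjoint, each of diameter $O(\varepsilon)$, and routed through the thin vertical strip over $[\theta_{i-1},\theta_{i+1}]$. Their existence rests on a topological argument in the annulus $U^+$: since $\phi$ is isotopic to the inclusion, the cyclic order of the $\phi(p_i)$ along $\phi(L)$ matches that of the $r_i$ along $S^1\times\{2\varepsilon\}$, so there exist $n$ disjoint arcs in $U^+$ connecting them; the small-diameter condition is enforced by working within each vertical strip separately. With the arcs in place, every face of $\Gamma'$ is contained in a disk of diameter $O(\varepsilon)$, hence $\psi$, which sends each face of $\Gamma$ onto the corresponding face of $\Gamma'$, satisfies $\Vert\psi\Vert_{C^0}\leq C\varepsilon$ for some universal $C$.

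The ``moreover'' clause follows by running the construction so that all subdivision points outside the arc $I$ remain fixed and $\Gamma=\Gamma'$ outside the quadrilateral $I\times[-1,1]$, forcing $g=\mathrm{Id}$ there and hence $\psi\equiv\mathrm{Id}$ outside $I\times[-1,1]$. The principal obstacle is the construction of the arcs $\beta_i^\pm$ of small diameter: because $\phi$ is only continuous, $\phi(L)$ can be topologically wild and vertical lines cannot be used transversely, so the disjointness together with the diameter bound must be arranged via a purely topological argument, for instance by straightening $\phi(L)$ locally in each vertical strip (reapplying Corollary \ref{coro: JS} at a smaller scale, or invoking the standard annulus Schönflies theorem inside each strip).
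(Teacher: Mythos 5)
Your proposal does not prove the statement it was supposed to prove. The statement in question is Corollary \ref{coro: JS} itself (the graph-theoretic consequence of the Jordan--Sch\"onflies theorem, quoted from Thomassen): given a homeomorphism and plane-homeomorphism $g$ between two $2$-connected plane graphs $\Gamma$ and $\Gamma'$, extend $g$ to a homeomorphism of the whole plane. Your text instead takes Corollary \ref{coro: JS} as a black box and uses it to build the small extension $\psi$ of a $C^0$-small embedding of $L=S^1\times 0$ --- that is, you have sketched a proof of Lemma \ref{lem: continuous Khanevsky extension}, not of the corollary. As an argument for the assigned statement this is circular: the very result to be established is invoked in the first sentence and again at the end (``reapplying Corollary \ref{coro: JS} at a smaller scale''). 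Note also that the paper itself offers no proof of Corollary \ref{coro: JS}; it is recalled from \cite{Thomassen}, so there is nothing in your sketch that addresses the actual content of the statement.

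A proof of Corollary \ref{coro: JS} along the intended lines would go roughly as follows: since $\Gamma$ is $2$-connected, every face boundary of $\Gamma$ (and of $\Gamma'$) is a cycle, hence a Jordan curve; the hypothesis that $g$ is a plane-homeomorphism means that $g$ carries the boundary cycle of each face of $\Gamma$ onto the boundary cycle of the corresponding face of $\Gamma'$, matching bounded faces with bounded faces and the unbounded face with the unbounded face. By the Jordan--Sch\"onflies theorem, the restriction of $g$ to each such Jordan curve extends to a homeomorphism of the closed disk it bounds onto the closed disk bounded by its image (and, for the unbounded face, to a homeomorphism of the closed unbounded complementary regions, e.g.\ after a M\"obius inversion). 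These extensions agree with $g$ on $\Gamma$, and since the closed faces cover the plane and meet only along $\Gamma$, they glue to a homeomorphism of $\mathbb{R}^2$ extending $g$. Your construction of the two combinatorially equivalent graphs and of the arcs $v_i'^\pm$ may well be a reasonable variant of the paper's proof of Lemma \ref{lem: continuous Khanevsky extension}, but it leaves the corollary itself unproved.
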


\begin{proof}[Proof of Lemma \ref{lem: continuous Khanevsky extension}]
We start by defining some notations and preliminary results, let~$K:=\phi(L)$, and let~$K$ be parameterized by~$\gamma: \mathbb{S}^1\rightarrow \mathbb{A}_1, \, t\mapsto \phi(t,0)$. Thus~$\gamma$ can be viewed as a map from~$L$ to~$K$ and this map has a~$C^0$-norm smaller than~$\varepsilon$.

We assume without loss of generality that the length of $L$ is 1. Let $n$ be the integer $\lfloor 1/(16 \varepsilon) \rfloor$ and let~$(x_1,0)$,~$(x_2,0),\ldots, (x_{2n},0)$ be $2n$ points evenly spaced and in cyclic order on $L$. Then the distance between to consecutive points is 
\[\dfrac{1}{2n}=\dfrac{1}{2 \left \lfloor \dfrac{1}{16 \varepsilon}\right\rfloor}=\dfrac{1}{2 \left(\dfrac{1}{16 \varepsilon}-x\right)}=\dfrac{8 \varepsilon}{1-32 \varepsilon x}\leq 8 \varepsilon +O(\varepsilon^2),\]
here $0 \leq x <1$ denotes the difference between $1/(16\varepsilon)$ and its integer part.

We denote~$S_k$ the rectangle whose vertices are~$(x_k,-4\varepsilon),(x_k,4\varepsilon),(x_{k+1},4\varepsilon)$ and~$(x_{k+1},-4\varepsilon)$. We also denote~$R_k$ the rectangle whose vertices are~$(x_k-3\varepsilon,-4\varepsilon), (x_k-3\varepsilon,4\varepsilon),(x_{k+1}+3\varepsilon,4\varepsilon)$ and~$(x_{k+1}+3\varepsilon,-4\varepsilon)$. The first observation is that since~$\Vert \phi \Vert_{C^0}\leq \varepsilon$, the curve~$K$ is strictly inscribed inside the union of the squares~$S_k$.

We define~$y_k:=\sup \{y\in\mathbb{R}, (x_k,y)\in K\}$, that is~$(x_k,y_k)$ is the highest point on the side of the square~$R_k$ that is also in~$K$. Let~$r_k$ be the line segment joining~$(x_k,y_k)$ and~$(x_{k+1},y_{k+1})$. Our goal is to find~$\psi$ a~$C^0$-small homeomorphism such that~$\psi(K)$ is the union of the segments~$r_k$. We also define~$t_k:=\gamma^{-1}(x_k,y_k)$. The second observation we make is that~$\gamma\vert_{[t_k,t_{k+1}]}$ lies inside~$R_k$, indeed we must have~$[t_k,t_{k+1}]\subset [x_k-\varepsilon, x_{k+1}+\varepsilon]$, and~$\pi_x(\gamma([t_k,t_{k+1}]))\subset [x_k-2\varepsilon, x_{k+1}+2\varepsilon]$, where~$\pi_x$ denotes the projection onto~$S^1\times \{0\}$.

\begin{figure}
\centering
\includegraphics[scale=0.85]{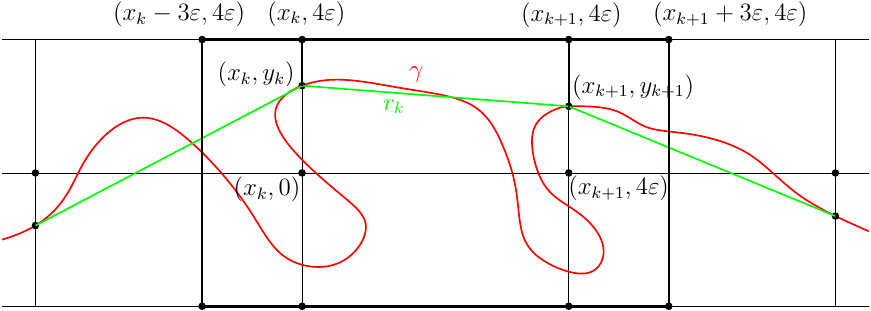}
\caption{Illustration of the proof of the extension lemma.}
\label{fig:  continuous extension}
\end{figure}

\medskip

\noindent \textbf{Step 1.} We define now, for~$k$ odd,~$\Gamma_{k}$ the 2-connected graph that coincides with the rectangle $R_k$ where we added the 4 vertices~$(x_k,4\varepsilon)$,~$(x_{k+1},4\varepsilon)$, $(x_k,y_k)$ and $(x_{k+1},y_{k+1})$ and the edges~$[(x_{k},4\varepsilon)(x_{k},y_{k})]$, $\gamma\vert_{[t_k,t_{k+1}]}$ and $[(x_{k+1},y_{k+1})(x_{k+1},4\varepsilon)]$. This is indeed a plane graph since~$\gamma\vert_{[t_k,t_{k+1}]}$ lies strictly inside~$R_k$. We also define~$\Gamma'_k$ the graph with the same edges except for~$\gamma\vert_{[t_k,t_{k+1}]}$ that is replaced by~$r_k$.
Now we define a homeomorphism~$g_k$ between~$\Gamma_k$ and~$\Gamma'_{k}$ by letting~$g$ be the identity on their common edges and any homeomorphism between~$\gamma\vert_{[t_k,t_{k+1}]}$ and~$r_k$. We can then apply the corollary \ref{coro: JS} of the Jordan-Schönflies Theorem. We obtain the homeomorphism~$\psi_k$, since the extension is made connected component by connected component we can ask for~$\psi_k$ to be the identity on the unbounded connected component of~$\Gamma_k$ and~$\Gamma'_k$. Since the support of~$\psi_k$ is inside~$R_k$ we have then~$\Vert \psi_k \Vert_{C^0}\leq 17\varepsilon$. Now~$\psi':=\Pi_{k \text{ odd}} \psi_k$ has good properties, namely,~$\Vert \psi' \Vert_{C^0}\leq 17 \varepsilon$, and~$\psi'(K)$ coincides with~$r_k$ for~$k$ odd. We denote $\check \gamma$ the curve $\psi' \circ \gamma$, the previous sentence can be rewritten $\check \gamma \vert_{[t_{k},t_{k+1}]} = r_k$ for $k$ odd.

\medskip

\noindent \textbf{Step 2.} We have to do the same thing for the other half of the curve. We define~$M_k:=((x_k+x_{k+1})/2,(y_k+y_{k+1})/2)$ and~$N_k:=((x_k+x_{k+1})/2,-4\varepsilon)$.

For this we split each segments~$r_k$ in two parts in the middle,~$p_k:=[(x_k,y_k),M_k]$ and~$q_{k+1}:=[M_k,(x_{k+1},y_{k+1})]$. For~$k$ odd we link~$M_k$ and~$N_k$ by a polygonal line segment~$s_k$ inside~$R_k$ and not crossing~$\check\gamma\vert_{[t_{k-1},t_k]}$,~$\check\gamma\vert_{[t_k,t_{k+1}]}$ nor~$\check\gamma\vert_{[t_{k+1},t_{k+2}]}$ (this can be done by means of Lemma 2.1 of \cite{Thomassen} for example). We define now, for~$k$ even,~$R'_k$ the polygon whose sides are the same as of~$S_k$ but we replace the segment~$[(x_{k},y_{k})(x_{k},-4\varepsilon)]$ by~$q_k\cup s_{k-1} \cup [N_{k-1}(x_{k},-4\varepsilon)]$ and the line segment~$[(x_{k+1},y_{k+1})(x_{k+1},-4\varepsilon)]$ by~$p_{k+1}\cup s_{k+1} \cup [N_{k+1}(x_{k+1},-4\varepsilon)]$. Since the curve $\check{\gamma}$ avoids the quadrilateral $Q_{k+1}$ formed by the vertices $(x_{k+1}, y_{k+1})$, $(x_{k+2}, y_{k+2})$, $(x_{k+2}, 4\varepsilon)$ and $(x_{k+1}, 4\varepsilon)$ there is some $\eta >0$ small enough such that, for all $k$ even, the segments $r'_k=[(x_{k}, 4\varepsilon-\eta)(x_{k+1}, 4\varepsilon-\eta)]$ avoid $\check{\gamma}$.

We can now apply Corollary \ref{coro: JS} to the 2-connected plane graphs
\[\Gamma_k:=R'_k \cup \check\gamma\vert_{[t_{k},t_{k+1}]}\cup Q_{k-1}\cup Q_{k+1}\]
and
\[\Gamma'_k:=R'_k \cup r'_k \cup Q_{k-1} \cup Q_{k+1}\]
and the homeomorphism~$g$ being the identity on the exterior on the graphs, $g((x_{k-1},y_{k-1}))=(x_{k-1}, 4\varepsilon-\eta)$ and $g((x_{k+1},y_{k+1}))=(x_{k+1}, 4\varepsilon-\eta)$. We obtain~$\psi_k$ a homeomorphism such that~$\psi_k(\check\gamma\vert_{[t_{k},t_{k+1}]})$ is the segment $r'_k$, $\psi_k(r_{k+1})$ is the broken segment composed of $[(x_{k+1},4\varepsilon-\eta)(x_{k+1}, y_{k+1})]$ and $r_{k+1}$, and~$\psi_k$ is the identity outside~$R'_k$. It becomes now easy to modify $\psi_k$ to obtain $\psi'_k$ such that $\psi'_k(\check{\gamma}\vert_{[t_{k-1},t_{k+2}]})$ is the broken segment $r_{k-1} \cup r_k \cup r_{k+1}$.

We denote~$\psi''=\Pi_{k \text{ even}}\psi'_k$, since the diameter of~$R'_k$ is smaller than~$C\varepsilon$ for~$C$ a positive real number we have~$\Vert \psi'_k \Vert_{C^0}\leq C \varepsilon$ and~$\Vert \psi'' \Vert_{C^0}\leq C\varepsilon$. 
and~$\psi'' (\psi'(K))$ is now the union of the segments~$r_k$.

\medskip

\noindent \textbf{Step 3.} Once we have the union of the segments~$r_k$ it is easier to finish, by taking for example the flow of an appropriate vector field (the cut-off of a constant vector field on the fibers~$(x,\cdot)$ for example). We denote~$\psi'''$ the last homeomorphism we obtain.

\medskip

\noindent \textbf{Step 4.} There is one last thing to do is a small perturbation of~$\psi''' \circ \psi'' \circ \psi'$ such that it coincides on~$L$ with the homeomorphism~$\phi$. This finishes the proof.

\end{proof}

This is the tool we needed to prove Lemma \ref{lem: continuous extension}. Notice here that in contrary of the proof of Lemma 6.6 in \cite{EPP} the proof is very short, indeed we do not need to care more about the extension since we care about obtaining a diffeomorphism and not only a homeomorphism.

\begin{proof}[Proof of Lemma \ref{lem: continuous extension}]
We apply Lemma \ref{lem: continuous Khanevsky extension} to the curves~$S^1\times \{\pm1 \}$ in~$\mathbb{A}_2$ and their images under~$\phi$. We can find~$\psi \in \text{Homeo}_{0,c}(\mathbb{A}_{2})$ supported in~$\mathbb{A}_{1-D_1\delta,1+D_1\delta}$ and agreeing with~$\phi^{-1}$ on~$\phi(S \times \{\pm 1\})$. Now when we extend~$\psi'=\psi \phi$ by the identity outside of~$\mathbb{A}_1$ we get the required result. 
\end{proof}


\section{Annex: Annex: Discussion on the constant in Corollary \ref{coro: $C^0$-small isotopy}}

In this section we discuss the question of finding the optimal constant in Corollary \ref{coro: $C^0$-small isotopy}.

\begin{prop}\label{prop: counterexample to constant 1}
	For all $C>1$, there exists a metric $g_C$ on the standard symplectic torus $(\mathbb T^2, \omega)$ such that there exists an area-preserving diffeomorphism $\varphi$ in the connected component of the identity but no isotopy $\varphi_t$ such that $\varphi_0=Id$, $\varphi_1=\varphi$ and for all $t$, $\Vert \varphi_t \Vert_{C^0}\leq C \Vert \varphi \Vert_{C^0}$.
\end{prop}

The example we give sadly only works at large scale and we use global topological properties of the manifold. The author asks the following question.

\begin{ques}
	For all real number $C \geq 1$ and a surface $\Sigma$, does there exists a metric $g_C$ such that for all $\varepsilon >0$, there exists an area-preserving diffeomorphism $\varphi$ in the connected component of the identity that satisfies that $\Vert \varphi \Vert_{C^0} \leq \varepsilon$ but no isotopy (not necessarily symplectic) $\varphi_t$ such that $\varphi_0=Id$, $\varphi_1=\varphi$ and for all time $t$, $\Vert \varphi_t \Vert_{C^0} \leq C \Vert \varphi \Vert_{C^0}$?
\end{ques}

Let us describe the example.

\begin{proof}[Proof of Proposition \ref{prop: counterexample to constant 1}]
	Pick two real numbers $0 < a < b$. We denote $\mathcal C_1$ and $\mathcal C_3$ two copies of the flat cylinder of circumference $2a$ and length $2a$, that is $\mathcal C_1$ is isometric to the quotient of $[-a; a]\times \mathbb R$ by the translation $x \mapsto x+2a$ on the second factor. We also denote $\mathcal C_2$ and $\mathcal C_4$ two copies of the flat cylinder of circumference $2b$ and length $2b$. We glue those four cylinders together to form a torus $\mathbb T_{a,b}$ in the following order $\mathcal C_1$, $\mathcal C_2$, $\mathcal C_3$ and finally $\mathcal C_4$ by adding small cylinders that interpolate the flat metric of the cylinders $\mathcal C_i$. We define $\varphi$ as two area-preserving Dehn twist on the cylinders $\mathcal C_1$ and $\mathcal C_3$ in opposite directions, this way the map is isotopic to the identity. Moreover, since $\varphi$ is compactly supported in $\mathcal C_1 \cup \mathcal C_3$, we get that $\Vert \varphi_t \Vert_{C^0} \leq a$. We pick an isotopy $\varphi_t$ and we assume by contradiction that for all $t$, $\Vert \varphi_t \Vert_{C^0} < b$.
	
	\begin{figure}[!h]
		\centering
		\includegraphics[scale=0.55]{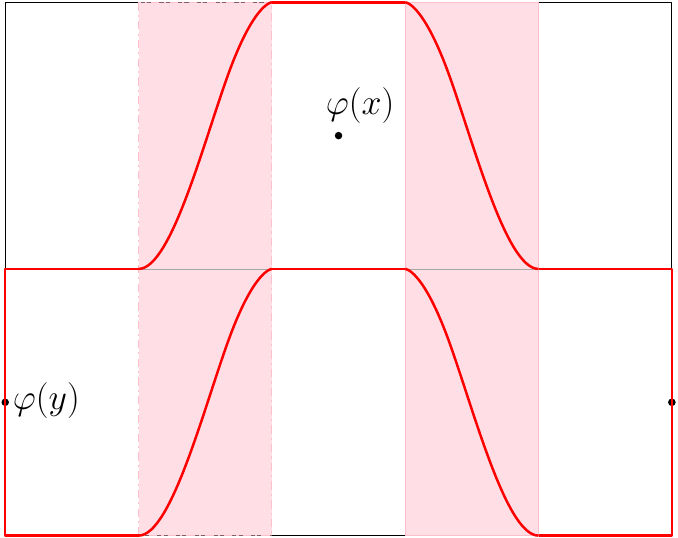}
		\caption{We have represented two fundamental domains. In red we drew the boundary of the image of a fundamental domain under $\varphi$, the cylinders $\mathcal C_1$ and $\mathcal C_3$ are colored in pink}
		\label{Fig: counterexample}
	\end{figure}
	
	Let $x$ and $y$ be points on the equator of $\mathcal C_2$ and $\mathcal C_4$ respectively. The loop $\varphi_t(x)$ stays in the open ball of radius $b$, thus in $\mathcal C_2$ and this means that the loop is contractible. We can apply the same
	reasoning to the loop $\varphi_t(y)$ in the cylinder $\mathcal C_4$. However, because of the Dehn twists it is impossible that both loops are contractible (see Remark \ref{rk: more explanation} for more explanation). This is the contradiction we wanted. This implies that for some $t$, $\Vert \varphi_t \Vert_{C^0} \geq b$.
	It suffices now to pick $a$ and $b$ such that $b/a > C$ to conclude the proof of the proposition.
\end{proof}

\begin{rk}\label{rk: more explanation}
A nice way to see that the last argument is true is to look at the image of a fundamental
domain $\mathcal D$ of $\mathbb T_{a,b}$ under any lift of the map $\varphi$ to the fundamental domain of $\mathbb T_{a,b}$ and to see that the image of the points $x$ and $y$ do not lie both in $\mathcal D$. See Figure \ref{Fig: counterexample} for an illustration.

\end{rk}

\bibliographystyle{alpha}
\bibliography{document}

\end{document}